\newcommand{\Href}[2]{\hyperref[#2]{#1~\ref{#2}}}
\newtheorem{thm}{Theorem}
\newtheorem{lem}{Lemma}[section]
\newtheorem{cor}{Corollary}[section]
\theoremstyle{definition}
\newtheorem{dfn}{Definition}[section]
\newtheorem{rem}[dfn]{Remark}
\def\R{{\mathbb R}}
\def\CP{{\mathbb C}{\mathrm P}}
\DeclareMathOperator{\arcsh}{arcsinh}
\DeclareMathOperator{\sn}{sn}
\def\phi{\varphi}
\def\epsilon{\varepsilon}
\def\mod{\mathrm{mod}\ }
\def\xin{f}
\def\etan{g}
\def\s{s}
\DeclareMathOperator{\arcch}{arccosh}
\DeclareMathOperator{\sh}{sinh}
\DeclareMathOperator{\sgn}{sgn}
\title{Orthodiagonal anti-involutive Kokotsakis polyhedra}
\author{Ivan Erofeev}
\address{%
	Centre for Synthetic and Systems Biology,\\
	School of Biological Sciences,\\
	University of Edinburgh, Edinburgh EH9 3BF, UK
}
\email{ivan.erofeev@ed.ac.uk}
\author{Grigory Ivanov }
\address{Department of Mathematics\\
	University of Fribourg\\
	Chemin du Mus\'ee 23\\
	CH-1700 Fribourg P\'erolles\\
	SWITZERLAND}
\address{Department of Higher Mathematics\\
 Moscow Institute of Physics and Technology\\
   Institutskii pereulok 9, Dolgoprudny, Moscow
region\\ 141700, Russia.}
\email{grigory.ivanov@unifr.ch}
\thanks{%
	G.I. was supported by the Swiss National Science Foundation
	grant 200021\_179133.}
\begin{document}
\maketitle

\begin{abstract}
We study the properties of  Kokotsakis polyhedra of  orthodiagonal anti-involutive type.
Stachel conjectured that  a certain resultant connected to a polynomial system  describing flexion of a Kokotsakis polyhedron must be reducible. Izmestiev \cite{izmestiev2016classification} showed that  a polyhedron of the orthodiagonal anti-involutive type  is the only possible candidate to disprove Stachel's conjecture. 
We show that the corresponding resultant is reducible, thereby confirming the conjecture. We do it in two ways:  by  factorization
of the corresponding resultant and providing  a simple geometric proof.
We describe   the space of parameters
for which  such a polyhedron exists  and show that this space is non-empty.
We  show that a Kokotsakis polyhedron of  orthodiagonal anti-involutive type is flexible and  give  explicit parameterizations in elementary functions and in elliptic functions of its flexion.
\end{abstract}

\section{Introduction}

A \emph{Kokotsakis polyhedron} is a polyhedral surface in $\R^3$ which consists
of one $n$-gon (the \emph{base}), $n$ quadrilaterals attached to every side of the $n$-gon,
and $n$ triangles placed between each pair of  two consecutive quadrilaterals.
Such surfaces are rigid in general.
However, there exist special classes of flexible surfaces,
that is, ones  that allow isometric deformations.
Flexible Kokotsakis polyhedra were studied by several authors: in the PhD thesis of Antonios Kokotsakis and in \cite{kokotsakis1933bewegliche}; 
Nawratil \cite{nawratil2010flexible} studied flexible Kokotsakis polyhedra
with triangular base; and in \cite{sauer1931flachenverbiegung} some
classes of  flexible Kokotsakis polyhedra with quadrilateral base were found.
The question of complete classification for the case $n = 4$ was open.
Using Bricard's approach \cite{bricard1897memoire}, Stachel and Nawratil
\cite{nawratil2011reducible, nawratil2012reducible, stachel2010kinematic}
with the use of  the following trigonometric substitution

\begin{figure}[h]
	\includegraphics{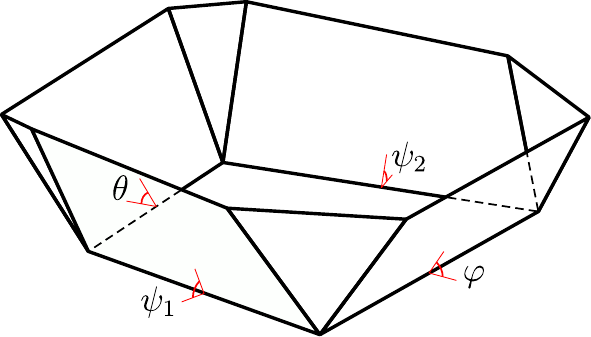}
	\caption{
	    \label{fig:intro-koto}
	    Notation of dihedral angles
	}
\end{figure}
\begin{equation}
	\label{eqn:Variables}
	\begin{gathered}
	z = \tan\frac{\phi}2,        \qquad w_1 = \tan\frac{\psi_1}2, \qquad
	u = \tan\frac{\theta}2,      \qquad w_2 = \tan\frac{\psi_2}2,
	\end{gathered}
\end{equation}

\noindent
expressed the dependencies between the dihedral angles 
$\phi$, $\psi_1$, $\theta$,  $\psi_2$
(see \Href{Figure}{fig:intro-koto}) as a polynomial system in $z$, $w_1$, $u$, $w_2$:
	\begin{equation}
	\label{eqn:PolSystem}
		\begin{aligned}
		P_1(z, w_1) &= 0, \quad & P_2(z, w_2) &= 0, \\
		P_3(u, w_1) &= 0, \quad & P_4(u, w_2) &= 0.
		\end{aligned}
	\end{equation} 
They studied this system with respect to different factorizations of the resultant 
$R_{12}(w_1, w_2)$ of $P_1$ and $P_2$ as polynomials in $z$,
and they classified all classes of flexible Kokotsakis polyhedra
with quadrilateral base for reducible $R_{12}$.
Stachel conjectured that there is no flexible Kokotsakis polyhedron
whose system has the irreducible resultant.

Considering a complexified version of system \eqref{eqn:PolSystem},
Izmestiev \cite{izmestiev2016classification} classified
all possible classes of flexible polyhedra disregarding reality and embeddability.
In particular, from the results of \cite{izmestiev2016classification} it follows that there is only one possible candidate 
for a flexible Kokotsakis polyhedron with the irreducible resultant $R_{12}$.
He called elements of this class Kokotsakis polyhedra 
of {\it orthodiagonal anti-involutive type} (OAI).

In this paper we study the properties of OAI Kokotsakis polyhedra.
We find an explicit parameterization in elementary functions of all  solutions
of the system \eqref{eqn:PolSystem} for this type
and show that there are solutions which may be embedded in  the real three-dimensional space.
We prove that the resultant $R_{12}$ is always reducible,
which confirms Stachel's conjecture. In addition, we provide 
a simple geometric proof of this fact in \Href{Theorem}{thm:Stachel_conj}
that directly follows from the results of \cite{izmestiev2016classification}.

The  rest of the article is organized as follows.
We give the definition of an OAI Kokotsakis polyhedron
in  \Href{Section}{sec:Def}
together with algebraic formulas for coefficients of the polynomial systems
required for further analysis.
However, we postpone an explanation of their geometric meaning to \Href{Section}{sec:geom_pr}.
\Href{Section}{sec:conf_space} is devoted to the description of
all solutions of the system \eqref{eqn:PolSystem} for an OAI Kokotsakis polyhedron,
which is the system \eqref{eqn:main_system_with_la_mu_signed}.
In \Href{Theorem}{thm:real_conf_space} of this Section,
we give an explicit parameterization
in elementary functions of all branches of the solution.
In other words, here we come up  with a description of all possible flexions of an OAI polyhedron
when all planar angles are given.
In \Href{Section}{sec:planar} we study a four parameter system
that describes planar angles of all flexible OAI Kokotsakis polyhedra.
We give a full parameterization of the solution set  of this system in terms of the angles
of the base quadrilateral and one additional parameter $\tau$
in \Href{Theorem}{thm:la-mu-parameterized}.
Summarizing the results, we introduce in \Href{Section}{sec:alg} an algorithm
for constructing a flexible OAI Kokotsakis polyderon for a given base quadrilateral without right angles.
In addition, we present results from the numerical screening of the space of parameters.
In \Href{Section}{sec:geom_pr}, we discuss the geometry behind
the definitions and   equations that we use,
show that there is a nice flattening effect in the case of the orthodiagonal anti-involutive surface,
and give a simple proof of Stachel's conjecture using it.
In \Href{Appendix}{sec:app_orth} we prove some technical results.

\begin{rem}
	We formulate all the theorems in such a manner that the reader can understand them
	right after reading the definitions of \Href{Section}{sec:Def}.
	This allows to use formulas from these theorems
	and construct a flexible polyhedron without going into the details of the proofs.
\end{rem}

\section{Definitions}
\label{sec:Def}
In this section we give an algebraic description
of a Kokotsakis polyhedron of the orthodiagonal anti-involutive type.
We do not discuss the nature of the written equations here.
A comprehensive explanations can be found in \cite{izmestiev2016classification}.
However, we give a brief explanation in \Href{Section}{sec:geom_pr},
where we show that Stachel's conjecture
is a simple consequence of the results from~\cite{izmestiev2016classification}.

\begin{figure}[h]
	\begin{center}
	\hspace{2cm}
	\begin{picture}(400,150)
	\put(20,0){\includegraphics[scale=1]{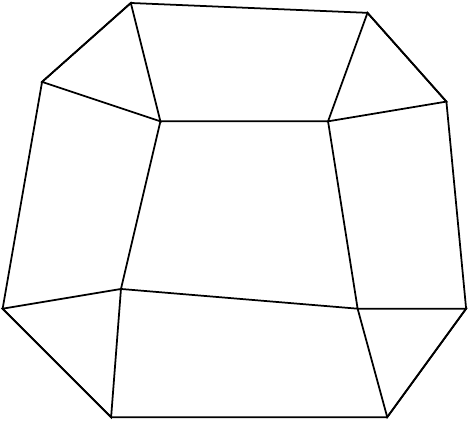}}
	\put(200,0){\includegraphics[scale=1]{Fig/2-def-plan-angles-eps-converted-to.pdf}}

	\put(56,40){$A_2$}
	\put(109,36){$A_1$}
	\put(65,77){$A_3$}
	\put(102,76){$A_4$}

	\put(157,32){$B_1$}
	\put(6,32){$B_2$}

	\put(290,37){$\delta_1$}
	\put(305,37){$\gamma_1$}
	\put(292,24){$\alpha_1$}
	\put(309,23){$\beta_1$}

	\put(222,42){$\gamma_2$}
	\put(240,42){$\delta_2$}
	\put(237,30){$\alpha_2$}
	\put(223,26){$\beta_2$}

	\put(232,79){$\gamma_3$}
	\put(247,77){$\delta_3$}
	\put(247,90){$\alpha_3$}
	\put(232,94){$\beta_3$}

	\put(298,81){$\gamma_4$}
	\put(285,77){$\delta_4$}
	\put(283,90){$\alpha_4$}
	\put(299,92){$\beta_4$}
	\end{picture}
	\caption{Planar angles in a Kokotsakis polyhedron.}

	\label{fig:NotPlanAngles}
	\end{center}
\end{figure}

Vertices of a Kokotsakis polyhedron and the values of its planar angles
at the interior vertices are denoted as in \Href{Figure}{fig:NotPlanAngles}.
Clearly, it is only a neighborhood of the {\it base} face $A_1A_2A_3A_4$ that matters:
replacing, say, the vertex $B_1$ by any other point on the half-line $A_1B_1$
does not affect the flexibility or rigidity of the polyhedron.

For each of the four interior vertices $A_1$, $A_2$, $A_3$, $A_4$
consider the intersection of the cone of adjacent faces with a unit sphere
centered at the vertex.
This yields four spherical quadrilaterals $Q_i$ with side
lengths $\alpha_i, \beta_i, \gamma_i, \delta_i$ in this cyclic order.
Let $\phi$, $\psi_1$, $\theta$, and $\psi_2$ be the exterior dihedral angles
at edges $A_1 A_2,$ $A_2 A_3,$ $A_3 A_4$ and $A_4 A_1,$ respectively.
Clearly, it suffices to parameterize these dihedral angles
in order to describe a flexion of a Kokotsakis polyhedron. 

\begin{figure}[ht]
	\begin{center}
	\begin{picture}(150,150)
	\put(0,0){\includegraphics[scale=0.4]{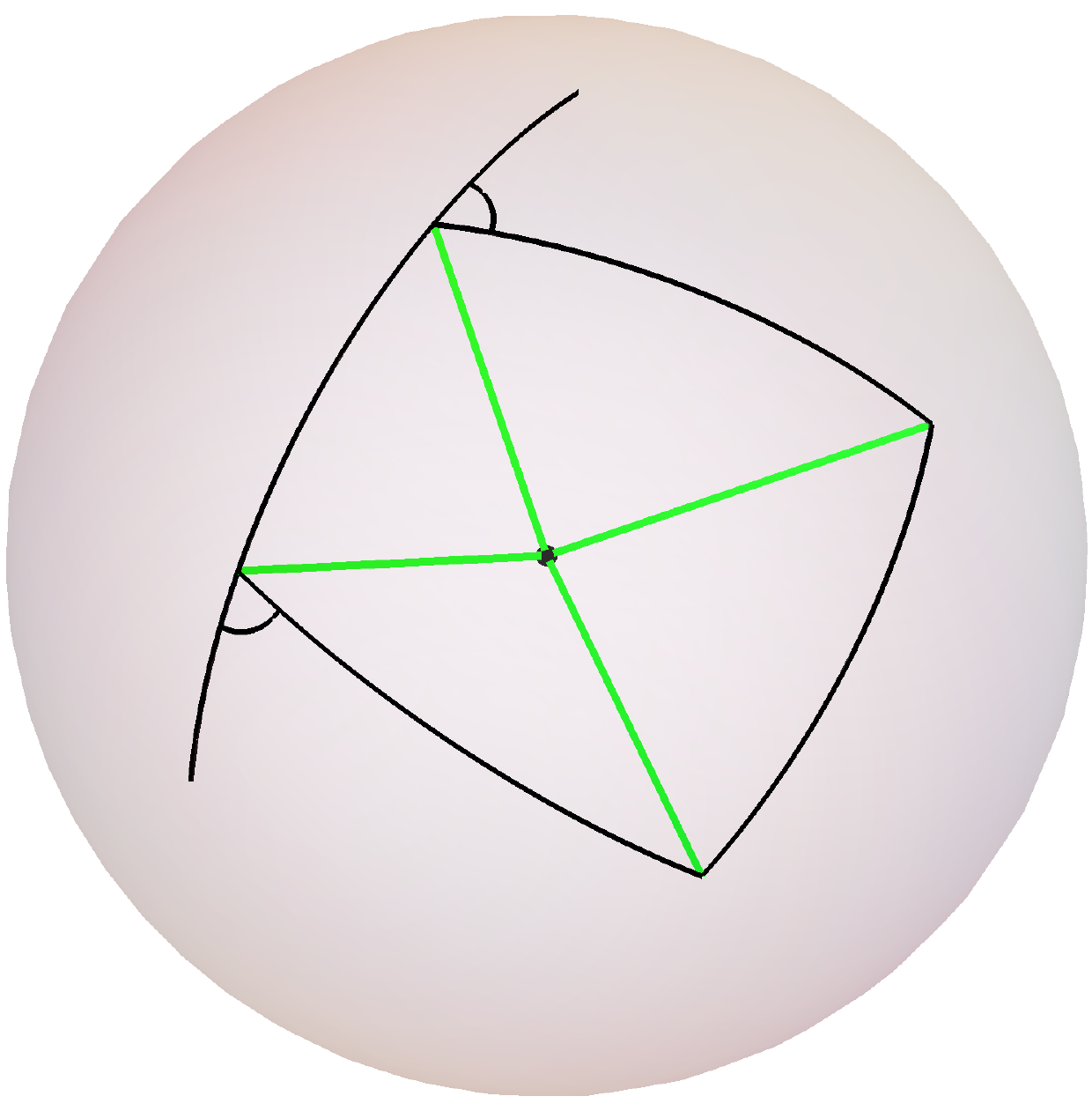}}
	\put(30,90){$\delta$}
	\put(98,94){$\gamma$}
	\put(112,50){$\beta$}
	\put(67,31){$\alpha$}
	\put(68,118){$\psi$}
	\put(31,53){$\phi$}
	\end{picture}
	\end{center}
	\caption{ Spherical quadrilateral $Q$}
	\label{fig:Quad_Notation}
\end{figure}

\subsection{Orthodiagonal quadrilaterals}
\label{sec:Orthog_Quad}

We call a spherical quadrilateral \emph{orthodiagonal} if its diagonals are orthogonal.
A spherical quadrilateral is called {\it elliptic}
if its planar angles satisfy 
$\alpha \pm \beta \pm \gamma \pm \delta \ne 0  \,(\mod 2\pi).$

Let $Q$ be a spherical elliptic orthodiagonal quadrilateral
with side lengths and angles  denoted as in \Href{Figure}{fig:Quad_Notation}. Then the orthodiagonality of $Q$ is equivalent to the following identity:
	\begin{equation} \label{eqn:orth_identity}
		\cos\alpha\cos\gamma = \cos\beta\cos\delta
	\end{equation}
(see  \Href{Lemma}{lem:OrthoRelation} below).

By definition put
	$z = \tan \frac{\phi}{2},$ $w = \tan \frac{\psi}{2}$. 
Then $(z, w)$ satisfy the following equation
(see Lemma 4.13 in \cite{izmestiev2016classification}):
	\begin{equation}
	\label{eqn:OrthoPolynomial}
		(z^2 + \lambda)(w^2 + \mu ) = \nu z w, \quad {z,w \in \mathbb{R}P^1 = \mathbb{R} \cup \infty,}
	\end{equation}
where $\lambda, \mu$ are the so-called {\it involution factors} defined as follows
\begin{gather}
	\label{eqn:def_la}
	\lambda \coloneqq
		\begin{cases}
		\frac{\tan\delta + \tan\alpha}{\tan\delta - \tan\alpha},
			&\text{if } \alpha \ne \frac{\pi}2 \text{ or } \delta \ne \frac{\pi}2,\\
		\frac{\cos\beta + \cos\gamma}{\cos\beta - \cos\gamma},
			&\text{if } \alpha = \delta = \frac{\pi}2;
		\end{cases}\\
	\label{eqn:def_mu}
	\mu \coloneqq
		\begin{cases}
		\frac{\tan\delta + \tan\gamma}{\tan\delta - \tan\gamma},
			&\text{if } \gamma \ne \frac{\pi}2 \text{ or } \delta \ne \frac{\pi}2,\\
		\frac{\cos\beta + \cos\alpha}{\cos\beta - \cos\alpha},
			&\text{if } \gamma = \delta = \frac{\pi}2;
		\end{cases}
\end{gather}
and $\nu$ is given by
\begin{equation}
	\label{eqn:def_nu}
	\nu \coloneqq
		\begin{cases}
		\frac{(\lambda - 1)(\mu - 1)}{\cos\delta},
			&\text{if } \delta \ne \frac\pi{2},\\
		2(\mu -1) \tan\alpha,
			&\text{if } \delta = \gamma = \frac\pi{2},\\
		2(\lambda - 1) \tan\gamma,
			&\text{if } \delta = \alpha = \frac\pi{2}.
		\end{cases}
\end{equation}

The involution factors $\lambda, \mu$  and $\nu$ are well-defined real numbers different from $0$.

\subsection{Orthodiagonal anti-involutive type}

A Kokotsakis polyhedron belongs to the {\it orthodiagonal anti-involutive type},
if its planar angles satisfy the following conditions. 

\begin{enumerate}
	\item[1)]
		All quadrilaterals $Q_i$ are orthodiagonal and elliptic:
		$$
			\cos\alpha_i \cos\gamma_i = \cos\beta_i \cos\delta_i, \qquad
			\alpha_i \pm \beta_i \pm \gamma_i \pm \delta_i \ne 0\ (\mod 2\pi).
		$$
	\item[2)]
		The involution factors at common vertices are opposite:
		\begin{equation}
			\label{eqn:LambdaEq}
			\lambda_1 = -\lambda_2, \qquad \mu_1 = -\mu_4, \qquad
			\mu_2 = -\mu_3, \qquad \lambda_3 = -\lambda_4.
		\end{equation}
	\item[3)] 
		The involution factors $\lambda_i, \mu_i$ and $\nu_i$ satisfy the following system:
		\begin{equation}
		\label{eqn:SystemProporResultants}
			\frac{\nu_1^2}{\lambda_1\mu_1} = \frac{\nu_3^2}{\lambda_3\mu_3}, \qquad
			\frac{\nu_2^2}{\lambda_2\mu_2} = \frac{\nu_4^2}{\lambda_4\mu_4}, \qquad
			\frac{\nu_1^2}{\lambda_1\mu_1} + \frac{\nu_2^2}{\lambda_2\mu_2} = 16.
		\end{equation}
\end{enumerate}

By \eqref{eqn:OrthoPolynomial} and \eqref{eqn:LambdaEq},
variables $(z, w_1, u, w_2)$ of a Kokotsakis polyhedron of the 
orthodiagonal anti-involutive type satisfy the system
\begin{equation}
	\label{eqn:main_system_with_la_mu_signed}
	\begin{aligned}
		P_1 = (z^2 + \lambda_1) (w_1^2 + \mu_1) -\nu_1 z w_1 =0, \\
		P_2 = (z^2 - \lambda_1) (w_2^2 - \mu_3) -\nu_2 z w_2 =0, \\
		P_3 = (u^2 + \lambda_3) (w_2^2 + \mu_3) -\nu_3 u w_2 =0, \\
		P_4 = (u^2 - \lambda_3) (w_1^2 - \mu_1) -\nu_4 u w_1 =0.
	\end{aligned}
\end{equation}
We are interested only in the {\it non-trivial} solutions of this system, that is,
 the one-parametric branches of the solution set chosen in a way  that each of $z, w_1, u, w_2$ is not a constant. 
\begin{rem}
	System \eqref{eqn:SystemProporResultants} expresses the proportionality
	of the resultants
	$R_{12} (w_1, w_2)=\mathop{\mathrm{res}}_z(P_1, P_2)$
	and $R_{34}(w_1, w_2) = \mathop{\mathrm{res}}_u(P_3, P_4)$.
\end{rem}

\begin{rem}
	There is an error in the last equation of \eqref{eqn:SystemProporResultants}
	in \cite{izmestiev2016classification}, Subsection 3.1.2.
\end{rem}

\subsection{Five-parametric system}
\label{subsec:five_parametric}

The problem of description of all flexible  Kokotsakis polyhedra
of the orthodiagonal anti-involutive type
is separated into two sub-problems in a natural way:
\begin{itemize}
	\item to describe the planar angles  $(\alpha_i, \beta_i, \gamma_i, \delta_i)$
	such that the corresponding polyhedron is of the orthodiagonal anti-involutive type;
	\item to describe its flexion, i.e. the trajectory in dihedral angles space.
\end{itemize} 
The latter  is equivalent to solving system \eqref{eqn:main_system_with_la_mu_signed},
which describes the configuration space of $(z, w_1, u, w_2)$,
with  given coefficients  $\lambda_i$, $\mu_i$, $\nu_i$ that satisfy \eqref{eqn:LambdaEq}
and \eqref{eqn:SystemProporResultants}.
In \Href{Theorem}{thm:real_conf_space} we give a complete parameterization
with one variable of a solution set of this system. 

It is more complicated to describe all admissible planar angles
$(\alpha_i, \beta_i, \gamma_i, \delta_i)$. 
It is natural to consider the angles
$(\delta_1, \delta_2, \delta_3, \delta_4)$ as parameters. 
Since $\delta_1 + \delta_2 + \delta_3 +\delta_4 = 2 \pi$,
it is a three parametric family.
From the algebraic point of view,
system \eqref{eqn:SystemProporResultants} together with identities \eqref{eqn:LambdaEq}
and definition \eqref{eqn:def_nu} is a system of three equations in four variables
$(\lambda_1, \mu_1, \lambda_3, \mu_3)$.
It is enough to introduce one more parameter
to describe a solution, which we do in \Href{Section}{sec:planar}.
On the other hand,  given $(\delta_1, \delta_2, \delta_3, \delta_4)$
and $(\lambda_1, \mu_1, \lambda_3, \mu_3)$ together with \eqref{eqn:LambdaEq}
one can recover angles $\alpha_i, \gamma_i$ from
\begin{equation} \label{eqn:a_g_of_la_mu}
	\tan\alpha_i = \frac{\lambda_i-1}{\lambda_i+1}\tan\delta_i, \quad
	\tan\gamma_i = \frac{\mu_i-1}{\mu_i+1}\tan\delta_i 
\end{equation}
in the case $\frac{\pi}{2} \notin \{\delta_1, \delta_2, \delta_3, \delta_4 \},$
and compute $\beta_i$ from
\begin{equation}\label{eqn:beta_cond}
	\cos\beta_i = \frac{\cos\alpha_i\cos\gamma_i}{\cos\delta_i},
\end{equation}
if the right-hand side is in $(-1,1)$.
That is, the angles of the base quadrilateral and $(\lambda_1, \mu_1, \lambda_3, \mu_3)$ determine all the needed data to construct a Kokotsakis polyhedron.

It is not obvious that such a geometric construction exists even one finds all the  planar angles of an OAI Kokotsakis polyhedron.
However, the following result gives the answer to this question.
\begin{dfn}\label{def:geom_req} 
We consider the following  {\it  geometric assumptions} on $(\delta_1, \delta_2, \delta_3, \delta_4)$ and
$(\lambda_i, \mu_i).$ 
\begin{enumerate}[label=\arabic*.]
	\item
		$(\delta_1, \delta_2, \delta_3, \delta_4)$ are the angles of a quadrilateral
		without right angles, that is $\delta_i > 0$ and $\delta_i \neq \frac{\pi}{2}$,
		where  $i=1,2,3,4$ and $\delta_1 + \delta_2 + \delta_3 +\delta_4 = 2 \pi.$
	\item
		$(\lambda_i, \mu_i)_1^4$  satisfy systems \eqref{eqn:LambdaEq},
		\eqref{eqn:SystemProporResultants} with $\nu_i$ given by \eqref{eqn:def_nu}.
	\item
		The angles $\{\alpha_i, \gamma_i, \beta_i \}_1^4$ given by
		\eqref{eqn:a_g_of_la_mu} and \eqref{eqn:beta_cond} are well-defined, that is,
		the right-hand side of \eqref{eqn:beta_cond} is in interval 
		$(-1,1).$ 
	\item
		Spherical quadrilaterals $Q_i$ with sides $(\alpha_i, \beta_i, \gamma_i, \delta_i)$,
		where  $1 \leq i \leq 4$, are elliptic.
\end{enumerate} 
\end{dfn}
\begin{thm}\label{thm:existence}
	Let $(\delta_1, \delta_2, \delta_3, \delta_4)$ and $(\lambda_i, \mu_i)_1^4$
	meet the geometric assumptions of \Href{Definition}{def:geom_req}.
	Then a Kokotsakis polyhedron with spherical quadrilaterals $Q_i$
	with sides $(\alpha_i, \beta_i, \gamma_i, \delta_i)$ exists,
	it is flexible and all its possible flexion are parameterized
	as in \Href{Theorem}{thm:real_conf_space}.  
\end{thm}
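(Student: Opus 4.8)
The plan is to assemble a genuine polyhedral surface in $\R^3$ out of the metric data provided by the geometric assumptions, and then to deduce its flexibility from the description of the configuration space furnished by \Href{Theorem}{thm:real_conf_space}. First I would realize the rigid part of the data. By assumption~1 the angles $\delta_i$ are positive and sum to $2\pi$, so there is a planar quadrilateral $A_1A_2A_3A_4$ whose interior angle at $A_i$ equals $\delta_i$; since its side lengths are immaterial for flexibility, as noted after \Href{Figure}{fig:NotPlanAngles}, fixing any such quadrilateral anchors the construction. Assumptions~3 and~4 then guarantee that for each $i$ the triple $(\alpha_i,\beta_i,\gamma_i)$ obtained from \eqref{eqn:a_g_of_la_mu} and \eqref{eqn:beta_cond}, together with $\delta_i$, is the list of side lengths of an honest elliptic spherical quadrilateral $Q_i$ satisfying the orthodiagonality relation \eqref{eqn:orth_identity}. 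Hence every interior vertex $A_i$ carries a well-defined, non-degenerate cone of four faces with spherical link $Q_i$, so the intrinsic geometry of the prospective polyhedron is fixed.

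Next I would produce an explicit flexion. By assumption~2 the involution factors obey \eqref{eqn:LambdaEq} and \eqref{eqn:SystemProporResultants}, so the dihedral-angle variables $(z,w_1,u,w_2)$ are governed precisely by system \eqref{eqn:main_system_with_la_mu_signed}, and \Href{Theorem}{thm:real_conf_space} supplies a real, non-constant, one-parameter branch of its solutions. Each point of this branch prescribes real exterior dihedral angles $\phi,\psi_1,\theta,\psi_2$ along the four base edges.

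The assembly is the step I expect to be delicate. The reconstruction is local-to-global: keep the base face fixed, rotate the quadrilateral wing glued along each base edge by the prescribed dihedral angle, and close the four corner gaps with the triangles. The one thing to verify is that the four faces around each vertex $A_i$ close up, and this closure is exactly what the orthodiagonal polynomial relation \eqref{eqn:OrthoPolynomial} for $Q_i$ encodes (Lemma 4.13 of \cite{izmestiev2016classification}). Since the surface is a topological disk pinned down by its planar base, there is no residual global holonomy beyond these four vertex conditions; hence once all four equations of \eqref{eqn:main_system_with_la_mu_signed} hold simultaneously the faces glue into a polyhedral surface in $\R^3$. The crux is to check that a \emph{real} algebraic solution returns a \emph{geometrically realizable} configuration: one must ensure that the angles of each $Q_i$ distributed to the dihedral edges come out real, that no face degenerates, and that the sign pattern $z^2\pm\lambda_1$, $w^2\pm\mu$ dictated by the anti-involutive relations \eqref{eqn:LambdaEq} is the branch that corresponds to an actual spherical quadrilateral rather than to a spurious algebraic one. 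It is precisely here that the ellipticity of assumption~4 and the interval condition of assumption~3 are used, keeping the inverse passage from \eqref{eqn:OrthoPolynomial} back to $Q_i$ real and non-degenerate.

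Finally, flexibility and the parameterization follow at once. As the parameter of the branch in \Href{Theorem}{thm:real_conf_space} varies, the dihedral angles change continuously and non-trivially while every planar angle stays fixed, so the resulting family is a genuine isometric deformation and the polyhedron is flexible; conversely every flexion solves \eqref{eqn:main_system_with_la_mu_signed}, so these branches exhaust all flexions.
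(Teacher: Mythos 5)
Your proposal has the right architecture (realize the metric data, invoke \Href{Theorem}{thm:real_conf_space}, assemble), but it skips the one step that carries the actual mathematical content of the paper's proof. \Href{Theorem}{thm:real_conf_space} is \emph{conditional}: it presupposes the sign normalization \eqref{eqn:choosing_sign_pattern} and yields a real non-trivial one-parameter solution set \emph{if and only if} $\zeta_1=|\nu_1|/(4\sqrt{\lambda_1\mu_1})>1$. You write that ``by assumption~2 \ldots \Href{Theorem}{thm:real_conf_space} supplies a real, non-constant, one-parameter branch,'' but assumption~2 only gives \eqref{eqn:LambdaEq} and \eqref{eqn:SystemProporResultants}; it does not by itself give the sign pattern, nor $\zeta_1>1$, and without the latter the solution set could be empty or discrete and the polyhedron rigid. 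This is precisely what \Href{Lemma}{lem:proper_tau} establishes: using the reality of the $\lambda_i,\mu_i$ (so that $r_i=2\lambda_i/(\lambda_i^2+1)$ and $c_i=2\mu_i/(\mu_i^2+1)$ satisfy $-1\le r_i,c_i<1$) together with $\delta_i\neq\frac{\pi}{2}$ (so that $1/\cos^2\delta_i>1$), one shows that assumptions 1--2 force the sign pattern \eqref{eqn:sign_pattern} up to re-enumeration and force $\nu_j^2/(\lambda_j\mu_j)>16$ at the vertex with $\lambda_j,\mu_j>0$. Note also that you cannot close this gap by appealing to \Href{Lemma}{lem:sign_patterns}: its proof \emph{assumes} the system already has a one-parametric solution to derive $\nu_1^2/(\lambda_1\mu_1)>16$, so using it here would be circular.

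A second, smaller gap: you assert that assumptions 3--4 ``guarantee \ldots an honest elliptic spherical quadrilateral $Q_i$.'' They guarantee only that the four side lengths are well-defined real numbers in the right range and that the resulting quadrilateral, \emph{if it exists}, is elliptic; the existence of a spherical quadrilateral realizing the cyclic side lengths $(\alpha_i,\beta_i,\gamma_i,\delta_i)$ subject to $\cos\alpha_i\cos\gamma_i=\cos\beta_i\cos\delta_i$ still requires a construction, which the paper supplies as \Href{Lemma}{lem:existence_spherical_quad} (placing two sides on a great circle and using the spherical Pythagorean theorem). By contrast, the assembly step you flag as ``delicate'' is the part the paper legitimately takes for granted: flexibility of a Kokotsakis polyhedron is equivalent to a real one-parameter family of solutions of \eqref{eqn:PolSystem} (Lemma 2.2 of \cite{izmestiev2016classification}, cited in \Href{Section}{sec:geom_pr}), so once the two missing verifications above are in place, the rest of your argument reduces to the paper's two-item checklist.
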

 
In \Href{Theorem}{thm:la-mu-parameterized} of \Href{Section}{sec:planar}
we parameterize all  $(\lambda_i, \mu_i)_1^4$ that
satisfy systems \eqref{eqn:LambdaEq}, \eqref{eqn:SystemProporResultants}
with $\nu_i$ given by \eqref{eqn:def_nu}. 

\begin{rem}
	We find it interesting that if $(\delta_1, \delta_2, \delta_3, \delta_4)$
	and $(\lambda_i, \mu_i)_1^4$  meet only first two of the geometric assumptions,
	a non-trivial solution of \eqref{eqn:main_system_with_la_mu_signed} still exists
	and is given by \Href{Theorem}{thm:real_conf_space}.
	However, the angles $\beta_i$ might fail to be determined by \eqref{eqn:beta_cond} if the absolute value of the right-hand side of \eqref{eqn:beta_cond} is bigger than 1.
	This follows from \Href{Lemma}{lem:proper_tau}
	and a condition of the existence of non-trivial solution
	in \Href{Theorem}{thm:real_conf_space}.
\end{rem}

\section{The real part of the configuration space}
\label{sec:conf_space}

The form of solution in real numbers of the system \eqref{eqn:main_system_with_la_mu_signed}
depends on the signs of $(\lambda_1, \mu_1, \lambda_3, \mu_3)$.
That is, it depends on  the signs of the entries in the matrix
\[
\begin{pmatrix}
	\lambda_1 & \mu_1 \\
	\lambda_2 & \mu_2 \\
	\lambda_3 & \mu_3 \\
	\lambda_4 & \mu_4
\end{pmatrix} = 
\begin{pmatrix}
	 \lambda_1  &  \mu_1 \\
	-\lambda_1  & -\mu_3 \\
	 \lambda_3  &  \mu_3 \\
	-\lambda_3  & -\mu_1
\end{pmatrix}.
\]
We call arrangement of signs in this matrix a {\it sign pattern}. 
It will be shown below in \Href{Lemma}{lem:sign_patterns}
that there is a unique possible arrangement of signs
up to re-enumeration of the vertices.
Using this observation, we assume that 
\begin{equation}
	\label{eqn:choosing_sign_pattern}
	\lambda_1, \mu_1 > 0\qquad \text{and} \qquad \lambda_3, \mu_3 < 0.
\end{equation}

\begin{thm}
\label{thm:real_conf_space}
Let  $\lambda_i, \mu_i, \nu_i$ satisfy systems \eqref{eqn:SystemProporResultants}, \eqref{eqn:LambdaEq}
	and assumption \eqref{eqn:choosing_sign_pattern}. 
	Then system \eqref{eqn:main_system_with_la_mu_signed} has a real non-trivial one-parametric set of solutions  if and only if
		$\zeta_1 \coloneqq \cfrac{|\nu_1|}{4\sqrt{\lambda_1\mu_1}}  > 1$.
The solution set  has four branches, which, up to the symmetry
	$(z, w_1, u, w_2) \to - (z, w_1, u, w_2)$,
	are given by
	\begin{equation}
	\label{eqn:sol_reduced_system}
		\begin{cases}
			z   =  \sgn\nu_1 \cdot \sqrt{\lambda_1} \cdot
				F(t) F(t + \frac{\pi}{2}), \\
			w_1 =  \sqrt{\mu_1} \cdot F(t) F(t - \frac{\pi}{2}), \\[0.8ex]
			u = \sgn\nu_4  \cdot \sqrt{-\lambda_3}\cdot
				\cfrac{\sqrt{\zeta_1+1} \pm \left(G(t) + G(t+\frac\pi2)\right)}
				{V(t) - V(t+\tfrac\pi2)},\\[1.5ex]
			w_2 =  \sqrt{-\mu_3} \cdot
				\cfrac{\sgn(\nu_1\nu_2)\sqrt{\zeta_1 +1} \pm \sgn(\nu_3\nu_4) \left(G(t)+G(t-\frac\pi2)\right)}
				{V(t) - V(t-\tfrac\pi2)}.
		\end{cases},
	\end{equation}
	where
	\begin{align}
		F(t) &=  \sin t\sqrt{\zeta_1 -1} + \sqrt{1+(\zeta_1 -1) \sin ^2 t}; \nonumber\\ 
		G(t) &= \sin t \sqrt{1+(\zeta_1 -1) \sin^2 t}; \nonumber \\ 
		V(t) &= \sin t \sqrt{1+(\zeta_1 -1) \cos^2 t}, \nonumber
	\end{align}
	$t \in [0, 2 \pi)$ is a parameter
	and the choice of $\pm$ in $u$ and $w_2$ is simultaneous.
\end{thm}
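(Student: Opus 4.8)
The plan is to exploit the cyclic structure of \eqref{eqn:main_system_with_la_mu_signed}: the equation $P_1=0$ couples only $(z,w_1)$, so I would solve it first and parameterize its real locus by $t$; then $P_4=0$ determines $u$ from $w_1$ and $P_2=0$ determines $w_2$ from $z$, each as the two roots of a quadratic; finally $P_3=0$ is an overdetermining equation whose compatibility is guaranteed precisely by the proportionality of the resultants $R_{12}$ and $R_{34}$, i.e.\ by \eqref{eqn:SystemProporResultants}. As a preliminary step I would record the consequences of the hypotheses: under \eqref{eqn:choosing_sign_pattern} and \eqref{eqn:LambdaEq} one has $\lambda_1\mu_1,\lambda_3\mu_3>0$ while $\lambda_2\mu_2=\lambda_1\mu_3<0$ and $\lambda_4\mu_4=\lambda_3\mu_1<0$. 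Substituting these into \eqref{eqn:SystemProporResultants} collapses all the data to the single invariant $\zeta_1$: the analogous invariants $|\nu_i|/(4\sqrt{|\lambda_i\mu_i|})$ equal $\zeta_1$ for $i=3$ and $\zeta_2:=\sqrt{\zeta_1^2-1}$ for $i=2,4$. In particular $\zeta_1\ge1$ is forced (the sign of $\lambda_2\mu_2$ makes $16(\zeta_1^2-1)=\nu_2^2/|\lambda_2\mu_2|\ge0$), and the cross-relation $\zeta_1^2-1=\nu_2^2/(16|\lambda_2\mu_2|)$ is what will later produce the factors $\sqrt{\zeta_1\pm1}$.

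Next I would solve $P_1=0$. Writing it as a quadratic in $w_1$ and requiring a real root yields $\nu_1^2z^2\ge4\mu_1(z^2+\lambda_1)^2$, whose solvability in $z^2$ forces $\nu_1^2\ge16\lambda_1\mu_1$, that is $\zeta_1\ge1$; moreover at $\zeta_1=1$ the locus degenerates to $z^2=\lambda_1$ (so $z$ is constant and the branch trivial, while simultaneously $\nu_2=0$ degenerates $P_2$), which already gives the forward direction of the stated equivalence. For $\zeta_1>1$ I would verify the proposed substitution directly. Setting $a=F(t)$, $b=F(t+\tfrac\pi2)$, $c=F(t-\tfrac\pi2)$, the defining form of $F$ gives $F(t)F(-t)=1$, hence $bc=1$, together with $a^2+\tfrac1{a^2}=2+4(\zeta_1-1)\sin^2 t$ and $b^2+c^2=2+4(\zeta_1-1)\cos^2 t$; adding these yields $a^2+b^2+c^2+a^{-2}=4\zeta_1$, which is exactly the reduced form of $P_1$ after substituting $z=\sgn\nu_1\sqrt{\lambda_1}\,ab$, $w_1=\sqrt{\mu_1}\,ac$ and using $|\nu_1|=4\zeta_1\sqrt{\lambda_1\mu_1}$. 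Since $F>0$ throughout, this branch keeps $\sgn z=\sgn\nu_1$ and $w_1>0$, so it parameterizes one of the two ovals of $P_1$, the other being its image under the global sign flip.

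Then I would determine $u$ and $w_2$. Treating $P_4=0$ as a quadratic in $u$ with $w_1=w_1(t)$ known, its discriminant is proportional to $4\mu_1(\zeta_1^2-1)w_1^2-(w_1^2-\mu_1)^2$; substituting $w_1^2=\mu_1 a^2c^2$ and $\nu_4^2/|\lambda_4\mu_4|=16(\zeta_1^2-1)$ turns this into $-\big(a^2c^2-(\zeta_1+\zeta_2)^2\big)\big(a^2c^2-(\zeta_1-\zeta_2)^2\big)$, a perfect square in $\sin t,\cos t,\sqrt{1+(\zeta_1-1)\sin^2 t},\sqrt{1+(\zeta_1-1)\cos^2 t}$. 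Extracting the root and simplifying casts the two solutions of $P_4$ in the form displayed for $u$ in \eqref{eqn:sol_reduced_system}, with numerator $\sqrt{\zeta_1+1}\pm\big(G(t)+G(t+\tfrac\pi2)\big)$ and denominator $V(t)-V(t+\tfrac\pi2)$, the factor $\sgn\nu_4$ fixing the correct sheet. An identical computation with $P_2=0$ as a quadratic in $w_2$, now with $z=z(t)$ known, produces the displayed expression for $w_2$, where the prefactors $\sgn(\nu_1\nu_2)$ and $\sgn(\nu_3\nu_4)$ track the sign of $z$ and of the chosen $u$-root.

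Finally, and this is the step I expect to be the main obstacle, I would impose $P_3=0$. A priori the two roots for $u$ and the two roots for $w_2$ give four pairs $(u,w_2)$, and $P_3=0$ must select the admissible ones; the content of \eqref{eqn:SystemProporResultants} is exactly that $R_{12}$ and $R_{34}$ are proportional, so the locus on which a common $z$ exists coincides with the locus on which a common $u$ exists, and $P_3$ holds on precisely two of the four pairs — those for which the signs in $u$ and $w_2$ are chosen simultaneously. Verifying this directly amounts to substituting the closed forms of the previous step into $P_3$ and checking that the relation $\zeta_1^2=\zeta_2^2+1$ makes it vanish identically in $t$; organizing the computation so that the $\tfrac\pi2$-shifted arguments of $G$ and $V$ combine correctly is the delicate part. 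The simultaneous $\pm$ then labels two branches on the chosen oval, and applying the symmetry $(z,w_1,u,w_2)\to-(z,w_1,u,w_2)$ yields the remaining two, for a total of four. Completeness follows because each stage — the oval of $P_1$ and the two roots of the quadratics $P_2,P_4$ — exhausts the solutions of its equation, the discarded pairs being exactly those killed by $P_3$, while degenerate loci where a denominator vanishes or a coordinate is forced constant are excluded as trivial.
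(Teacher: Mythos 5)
Your overall route coincides with the paper's own proof: solve the positive-signature equation $P_1=0$ first, treat $P_2$ and $P_4$ as quadratics in $w_2$ and $u$ respectively, verify the displayed formulas by substitution, and let $P_3$ do the branch selection. The only stylistic difference in the first stage is that the paper \emph{derives} the $F$-parameterization via the substitution $X=\log(\xin_1/\etan_1)$, $Y=\log \xin_1\etan_1$, which turns $P_1=0$ into $\cosh X+\cosh Y=2\zeta_1$ (whence also the necessity of $\zeta_1>1$), while you \emph{verify} it; your identities $F(t)F(-t)=1$, $a^2+a^{-2}=2+4(\zeta_1-1)\sin^2 t$, $b^2+c^2=2+4(\zeta_1-1)\cos^2 t$ are correct and do establish the $(z,w_1)$ part completely, and your discriminant factorization $-\bigl(a^2c^2-(\zeta_1+\zeta_2)^2\bigr)\bigl(a^2c^2-(\zeta_1-\zeta_2)^2\bigr)$ is right (it mirrors the factorization of $R^r_{12}$ in the paper). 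Your forward implication, via the discriminant of $P_1$ and the degeneration $z^2=\lambda_1$, $\nu_2=0$ at $\zeta_1=1$, is likewise sound and equivalent to the paper's.

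The genuine gap is in the completeness count. You assert that $P_3$ holds on \emph{precisely} two of the four candidate pairs $(u,w_2)$, but your only supporting tool — proportionality of $R_{12}$ and $R_{34}$ — yields the wrong kind of information: it guarantees that whenever $(w_1,w_2)$ admits a common root $z$ of $P_1,P_2$ it also admits \emph{some} common root $u$ of $P_3,P_4$, but it says nothing about which $u$-root pairs with which $w_2$-root, nor that the mixed-sign pairs fail. The direct verification you defer would confirm that the two simultaneous-sign pairs are solutions (the paper defers this too), but without excluding the other two pairs you have not shown that the theorem's list of four branches is exhaustive. The paper closes exactly this hole with a short device in the reduced system \eqref{eqn:reduced_system}: given $\xin_1$, the second equation fixes $(\etan_3^2+1)/\etan_3$, and its two roots have product $1$, so they share $(\etan_3^2+1)/\etan_3$ but differ in $(\etan_3^2-1)/\etan_3=(\etan_3^2+1)/\etan_3-2/\etan_3$; since the third equation fixes $(\etan_3^2-1)/\etan_3$ once $\xin_3$ is chosen, exactly one $\etan_3$-root is compatible with each $\xin_3$-root, giving exactly two branches with $\xin_1,\etan_1>0$ and, via the global sign flip, four in all. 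Alternatively, since each candidate pair is analytic in $t$, exhibiting one value of $t$ at which $P_3$ is nonzero on a mixed-sign pair would exclude it; some such argument must be added to your final step.
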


According to assumption \eqref{eqn:choosing_sign_pattern}
and by condition $\zeta_1 > 1$,
we see that all values under square roots in the previous formulas are positive.

\subsection{Uniqueness of the sign pattern}

Due to system \eqref{eqn:SystemProporResultants},
we have some restrictions to possible signs of $(\lambda_i, \mu_i)$. 
\begin{lem}\label{lem:sign_patterns}
	System (\ref{eqn:LambdaEq}-\ref{eqn:main_system_with_la_mu_signed})
	allows unique sign pattern of $(\lambda_i, \mu_i)$
	up to cyclic re-enumeration of vertices.
	The only possible sign pattern is
	\begin{equation}
	\label{eqn:sign_pattern}
	\mathrm{Sign}\ \begin{pmatrix}
		\lambda_1 & \mu_1 \\
		\lambda_2 & \mu_2 \\
		\lambda_3 & \mu_3 \\
		\lambda_4 & \mu_4
	\end{pmatrix} = 
		\begin{pmatrix}
		+ & +\\ - & +\\ - & -\\ + & -
		\end{pmatrix}.
	\end{equation}
\end{lem}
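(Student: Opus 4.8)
The plan is to reduce the eight numbers $\lambda_i,\mu_i$ to the four independent signs $\sgn\lambda_1,\sgn\mu_1,\sgn\lambda_3,\sgn\mu_3$ and then to cut down the possibilities using the three relations in \eqref{eqn:SystemProporResultants}, remembering that we only care about coefficients for which the system admits a real non-trivial solution. Writing $S_i\coloneqq\nu_i^2/(\lambda_i\mu_i)$, the hypothesis $\nu_i\neq0$ gives $\sgn S_i=\sgn(\lambda_i\mu_i)$, while \eqref{eqn:LambdaEq} yields $\lambda_2\mu_2=(-\lambda_1)(-\mu_3)=\lambda_1\mu_3$ and $\lambda_4\mu_4=\lambda_3\mu_1$. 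Thus the entire sign table in \eqref{eqn:sign_pattern} is determined by the quadruple $(\sgn\lambda_1,\sgn\mu_1,\sgn\lambda_3,\sgn\mu_3)$, and it suffices to pin this quadruple down.

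First I would use the two proportionality identities $S_1=S_3$ and $S_2=S_4$. Taking signs, they read $\sgn(\lambda_1\mu_1)=\sgn(\lambda_3\mu_3)$ and $\sgn(\lambda_1\mu_3)=\sgn(\lambda_3\mu_1)$, and both are equivalent to the single relation $\sgn\lambda_3=\sgn(\lambda_1\mu_1\mu_3)$. Hence $\sgn\lambda_3$ is a function of the other three signs, which already leaves only the eight quadruples satisfying this constraint.

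Next I would bring in the third identity $S_1+S_2=16$ together with real solvability, and this is the crucial — and the hardest — step. Since the right-hand side is positive and the $S_i$ are nonzero, $S_1$ and $S_2$ cannot both be negative; the real question is whether both can be positive. Here one invokes the solvability analysis that underlies \Href{Theorem}{thm:real_conf_space}: solving \eqref{eqn:main_system_with_la_mu_signed} under a fixed sign pattern is self-contained (so no circularity), and it shows that a real non-trivial solution exists only when $\zeta_1>1$, i.e. when $S_1=16\zeta_1^2>16$; by the symmetry of the system under re-enumeration the same statement holds with $S_2$ in place of $S_1$. As $S_1+S_2=16$, at least one of $S_1,S_2$ must exceed $16$, forcing the other to be negative. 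Thus $S_1S_2<0$, i.e. exactly one of $\lambda_1\mu_1$ and $\lambda_1\mu_3$ is positive. This excludes precisely the two quadruples with all four signs equal, for which $0<S_1,S_2<16$: these satisfy every algebraic identity in \eqref{eqn:SystemProporResultants} yet admit no real flexion.

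It remains to assemble the signs and account for the symmetry. The condition $S_1S_2<0$ together with $\sgn\lambda_3=\sgn(\lambda_1\mu_1\mu_3)$ determines the quadruple up to the overall sign of $\lambda_1$; choosing $\lambda_1>0$ gives $\lambda_1,\mu_1>0$ and $\lambda_3,\mu_3<0$, which is exactly \eqref{eqn:choosing_sign_pattern} and produces the table \eqref{eqn:sign_pattern}. The remaining global sign flip corresponds to re-enumerating the vertices by two steps, i.e. a cyclic shift of the rows by two. Finally I would check that a one-step re-enumeration interchanges the two families of edges — those carrying the $\lambda$-relations ($A_1A_2$, $A_3A_4$) and those carrying the $\mu$-relations ($A_2A_3$, $A_4A_1$) — so that it acts on the sign table by a simultaneous cyclic shift of the rows and a transposition of the two columns; tracking this action shows that all admissible quadruples form a single orbit. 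The two main obstacles are therefore the real-solvability step that rules out the all-equal-sign patterns, and the careful bookkeeping of how re-enumeration both permutes the rows and swaps the $\lambda/\mu$ columns.
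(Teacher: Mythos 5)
Your proposal runs on the same two engines as the paper's proof: (a) the sign bookkeeping coming from \eqref{eqn:LambdaEq} and the first two equations of \eqref{eqn:SystemProporResultants}, and (b) the real-solvability constraint that an equation of positive signature $(z^2+\lambda)(w^2+\mu)=\nu zw$ with $\lambda,\mu>0$ admits a one-parametric real solution only if $\nu^2/(\lambda\mu)>16$, which combined with $S_1+S_2=16$ forces the other resultant ratio to be negative. (The paper derives (b) inside the lemma by the elementary inequality $\bigl|x+\tfrac1x\bigr|\ge 2$, so your ``no circularity'' remark is justified --- you need only this self-contained fact, not the full \Href{Theorem}{thm:real_conf_space}.) The organizational difference is that the paper first finds, by pigeonhole, a vertex with $\lambda_i,\mu_i>0$ and relabels it as vertex $1$, after which the derivation is forced; you instead enumerate all sign quadruples $(\sgn\lambda_1,\sgn\mu_1,\sgn\lambda_3,\sgn\mu_3)$ and quotient by the re-enumeration symmetry at the end. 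Your explicit observation that an odd cyclic shift both permutes the rows and transposes the $\lambda/\mu$ columns is a genuine improvement in care: the paper glosses this point, and without the column swap the uniqueness statement would literally fail (see below).

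Two local repairs are needed. First, your assembly step contains a false deduction: $S_1S_2<0$ is equivalent to $\mu_1\mu_3<0$, which together with $\sgn(\lambda_1\mu_1\lambda_3\mu_3)=+$ gives $\lambda_1\lambda_3<0$, leaving \emph{four} quadruples (the signs of $\lambda_1$ and $\mu_1$ remain independent), not two; in particular $(+,-,-,+)$ survives, so ``choosing $\lambda_1>0$'' does not give $\mu_1>0$. This is not fatal --- $(+,-,-,+)$ and $(-,+,+,-)$ are exactly the odd-shift (row-cycle plus column-swap) images of \eqref{eqn:sign_pattern}, so the orbit-tracking you defer to the last sentence does close the argument --- but the intermediate claim ``up to the overall sign of $\lambda_1$'' must be replaced by ``up to the four-element orbit.'' Second, the sentence ``a real non-trivial solution exists only when $\zeta_1>1$'' must be tied to a row of the sign table with both entries positive: the criterion $S_i>16$ applies only to an equation of positive signature (for rows with both entries negative the equation is solvable for any value of $\zeta$). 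Your ``by symmetry'' clause does the right thing for the two all-equal-sign quadruples, since $(+,+,+,+)$ makes $P_1$ (and $P_3$) positive-signature and $(-,-,-,-)$ makes $P_2$ (and $P_4$) positive-signature, but the blanket phrasing ``at least one of $S_1,S_2$ must exceed $16$'' is only justified after noting that in each remaining case some row is $(+,+)$ --- which is precisely the pigeonhole observation the paper puts first.
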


\begin{proof}
	We first show that for some vertex both $\lambda_i,\mu_i>0$.
	By the third equation in \eqref{eqn:SystemProporResultants},
	we see that either $\lambda_1\mu_1 > 0$ or $\lambda_2\mu_2>0$.
	If in this positive product both factors are also positive,
	we have found the sought vertex.
	If however both coefficients are negative,
	from the fact that in a sign pattern there are always
	exactly four pluses and four minuses (follows from \eqref{eqn:LambdaEq}),
	 we conclude that if in sign pattern there is
	a row of minuses, there should be a row of pluses.
	
	As cyclic permutations just change the order of  vertices of the base quadrilateral,
	we assume that $\lambda_1 > 0$ and $\mu_1 > 0$.
	In other words, known signs are:
	\[
		\mathrm{Sign}\ \begin{pmatrix}
			\lambda_1 & \mu_1 \\
			\lambda_2 & \mu_2 \\
			\lambda_3 & \mu_3 \\
			\lambda_4 & \mu_4
		\end{pmatrix} = 
		\begin{pmatrix}
		+ & +\\ - & *\\ * & *\\ * & -
		\end{pmatrix}
	\]

	We rewrite equation $P_1(z,w_1)=0$ of \eqref{eqn:main_system_with_la_mu_signed}
	in the following way:
	\[
		\left(\frac{z}{\sqrt{\lambda_1}}+\frac{\sqrt{\lambda_1}}{z}\right)
		\left(\frac{w_1}{\sqrt{\mu_1}}+\frac{\sqrt{\mu_1}}{w_1}\right) = 
		\frac{\nu_1}{\sqrt{\lambda_1\mu_1}}.
	\]
	Since absolute values of both parentheses are greater or equal to 2,
	for this equation  to have a one-parametric solution (and not discrete set of points),
	we obtain
	\[
		\frac{\nu_1^2}{\lambda_1\mu_1} > 16.
	\]
	By the third equation in \eqref{eqn:SystemProporResultants}
	\[
		\frac{\nu_2^2}{\lambda_2 \mu_2} = 16 - \frac{\nu_1^2}{\lambda_1\mu_1} < 0.
	\]
	Hence, $0 > \lambda_2 \mu_2 = \lambda_1 \mu_3$ or, equivalently, $\mu_3 < 0$.
	By the first equation in  \eqref{eqn:SystemProporResultants}, 
	we get that $\lambda_3 < 0$.
	Finally, the only possible sign pattern is \eqref{eqn:sign_pattern}.
\end{proof}

\subsection{Reduced system}

To simplify equations of system \eqref{eqn:main_system_with_la_mu_signed}
and \eqref{eqn:SystemProporResultants},
we scale the variables and consider a reduced system
(after the re-enumeration of vertices so that $\lambda_1,\mu_1>0$ and $\lambda_3,\mu_3<0$).
Making the following substitution in system \eqref{eqn:main_system_with_la_mu_signed}:
\begin{equation}
\label{eqn:sub_for_reduced_system}
	\begin{aligned}
		\xin_1  &= \xin_2 = \frac{z}{\sqrt{\lambda_1}} \sgn{\nu_1}, \qquad &
		\xin_3  &= \xin_4 = \frac{u}{\sqrt{-\lambda_3}} \sgn{\nu_4}, \\
		\etan_1 &= \etan_4 =  \frac{w_1}{\sqrt{\mu_1}},  \qquad &
		\etan_2 &= \etan_3 =  \frac{w_2}{\sqrt{-\mu_3}} \sgn(\nu_1\nu_2),\\
		\zeta_i &= \frac{|\nu_i|}{4\sqrt{|\lambda_i\mu_i|}} \quad
		\text{for} \quad  i=1,2,4, \quad&
		\zeta_3 &=  \frac{\nu_3}{4\sqrt{\lambda_3\mu_3}} \sgn(\nu_1\nu_2\nu_4);
	\end{aligned}
\end{equation}
we get 4 equations:

\begin{equation}
\label{eqn:reduced_system}
	\begin{cases}
		(\xin_1^2 + 1) (\etan_1^2 + 1)= 4 \zeta_1 \xin_1 \etan_1, \\
		(\xin_1^2 - 1) (\etan_3^2 + 1)= 4 \zeta_2 \xin_1 \etan_3, \\
		(\xin_3^2 - 1) (\etan_3^2 - 1)= 4 \zeta_3 \xin_3 \etan_3, \\
		(\xin_3^2 + 1) (\etan_1^2 - 1)= 4 \zeta_4 \xin_3 \etan_1,
	\end{cases}
\end{equation}
where $\zeta_1, \zeta_2$ and $\zeta_4$ are positive.

System \eqref{eqn:SystemProporResultants},
which describes conditions for the proportionality of the resultants,
can be rewritten in the form
\begin{equation}
\label{eqn:resultants_sys_zeta}
	\begin{cases}
		\zeta_1^2 = \zeta_3^2, \\
		\zeta_2^2 = \zeta_4^2, \\
		\zeta_1^2 - \zeta_2^2 = 1.
	\end{cases}
\end{equation}

\subsection{Equation with positive signature}
\label{sec:eq_positive_signs}

Let us study the first equation in \eqref{eqn:reduced_system}:
\begin{equation}\label{eqn:eq_positive_signs}
	{(\xin_1^2  + 1)} {(\etan_1^2  + 1)} = 4 \zeta_1 \xin_1 \etan_1.
\end{equation}
Recall $\zeta_1 > 0$.

One can see that $\xin_1$ and $\etan_1$ are of the same sign,
and there is a symmetry of the solution $(\xin_1, \etan_1) \to (-\xin_1, -\etan_1)$. 
We assume that $\xin_1, \etan_1 > 0$.
Now we consider new coordinates
$X = \log(\xin_1/\etan_1), Y = \log \xin_1\etan_1.$
In these coordinates, the equation takes the form:
\begin{equation} \label{eqn:cosh_eq_positive_signs}
	\cosh X + \cosh Y = 2 \zeta_1.
\end{equation} 
Thus, there is a one-parametric real solution set if and only if
\begin{equation}\label{eqn:inequality_zeta}
	\zeta_1 > 1,
\end{equation} 
which we have already shown in \Href{Lemma}{lem:sign_patterns}.

Solution to Equation \eqref{eqn:cosh_eq_positive_signs}
can be parameterized in an explicit way.
Let $\omega = \arcch \zeta_1$, $\varrho = \sqrt{2} \sh \frac{\omega}{2}$.
Then we can parameterize solution of \eqref{eqn:cosh_eq_positive_signs} as 
\[
	X = \arcsh (\varrho \cos t) \qquad \mbox{and} \qquad
	Y = \arcsh (\varrho \sin t),
\]
where $t \in [0, 2\pi)$, 
which, by direct calculations,
leads to the following representation of $\etan_1$ and $\xin_1$:
\[
	\xin_1 =   F(t) F(t + \frac{\pi}{2})
	\quad \mbox{and} \quad
	\etan_1 =  F(t) F(t - \frac{\pi}{2}),
\]
where
\[
F(t) = \sqrt{(\zeta_1-1) \sin ^2t+1}+\sqrt{\zeta_1-1} \sin t;
\]

Moreover,  the solution of \eqref{eqn:cosh_eq_positive_signs}
is bounded in the plane $(X,Y)$.
This means that $\xin_1$ and $\etan_1$ cannot be equal $0$ or $\pm \infty$.
Or, speaking geometrically, if equation \eqref{eqn:eq_positive_signs}
describes the configuration space of the spherical quadrilateral of a Kokotsakis polyhedron, 
then the corresponding side quadrilaterals never cross the base plane (see \Href{Lemma}{lem:flat_orthodiag}).

\subsection{Proof of  \Href{Theorem}{thm:real_conf_space}}

\begin{proof}
	By the results of \Href{Subsection}{sec:eq_positive_signs}
	and by \eqref{eqn:inequality_zeta} in particularly,
	we see that the real solution of the system is nonempty and non-trivial
	if and only if $\zeta_1 > 1$
	(recall that $\zeta_1, \zeta_2$ and $\zeta_4$ are positive).
	Also in \Href{Subsection}{sec:eq_positive_signs},
	the equation with positive sign signature was resolved,
	which leads to the explicit formulas for $z$ and $w_1$. 
	One can prove that the formulas for $u$ and $w_2$ from \eqref{eqn:sol_reduced_system}
	are the solution of the system by the direct substitution.
	However, we want to show that we have found all branches of the solution. 
	We can consider system \eqref{eqn:reduced_system} and variables
	$(\xin_1,\etan_1, \xin_3, \etan_3)$ instead of $(z, w_1, \theta, w_2)$.
	One can see that the solution of the first equation
	is given in \Href{Subsection}{sec:eq_positive_signs},
	the second equation is quadratic in $\etan_3$,
	the fourth equation is quadratic in $\xin_3$. 
	Hence, we have two possible branches of solution for both $\xin_3$ and $\etan_3$,
	and there are at most four branches of the solution up to the symmetry 
	$(\xin_1,\etan_1, \xin_3, \etan_3) \to (-\xin_1,-\etan_1, -\xin_3, -\etan_3)$. 
	So the only difficulty is to check that we have chosen the correct branches
	to meet the third equation of the system.
	We claim that the latter obstacle implies that there are
	exactly two branches of the solution with positive $\xin_1$ and $\etan_1$.
	Hence, we described  all possible solution with positive $\xin_1$ and $\etan_1$.
	Indeed, fixing $\xin_1$, we fix $(\etan_3^2+1)/\etan_3$.
	Therefore, fixing $\xin_1, \xin_3, \etan_1$, we fix $(\etan_3^2-1)/ \etan_3$.
	But $(\etan_3^2-1)/ \etan_3= (\etan_3^2+1)/ \etan_3 - 2/\etan_3$
	changes its value if we change the branch for $\etan_3$.
	Therefore, there is a unique branch of $\etan_3$ for each branch of $\xin_3$.
	By the symmetry
	$(\xin_1, \xin_3, \etan_1, \etan_3) \to (-\xin_1,-\etan_1, -\xin_3, -\etan_3)$,
	we describe all branches of the solution.
\end{proof}

\subsection{Reducible resultant}
As a linear substitution does not change the property of irreducabillity of the resultant, we consider the resultant $R^r_{12}$ of the first  two equations of reduced system 
\eqref{eqn:reduced_system} as polynomials in $\xin_1.$
$R^r_{12}$ is given by:
\begin{multline*}
\frac14 R^r_{12} = 1 + 2(1-2\zeta_1^2)\etan_1^2 + 2(1+2\zeta_2^2)\etan_3^2
	+ \etan_1^4 + 4(1-2\zeta_1^2+2\zeta_2^2)\etan_1^2\etan_3^2 + \etan_3^4 \\
	\strut + 2(1+2\zeta_2^2)\etan_1^4\etan_3^2 + 2(1-2\zeta_1^2)\etan_1^2\etan_3^4
	+ \etan_1^4 \etan_3^4.
\end{multline*}
When $\zeta_1^2-\zeta_2^2 = 1$ from \eqref{eqn:resultants_sys_zeta} holds true,
the resultant is the product of the following polynomials:
\[
	\frac14 R^r_{12} =
	\Big(\etan_1^2 \etan_3^2 + (\zeta_1+\zeta_2)^2(\etan_1^2-\etan_3^2) - 1\Big)
	\Big(\etan_1^2 \etan_3^2 + (\zeta_1-\zeta_2)^2(\etan_1^2-\etan_3^2) - 1\Big).
\]
This can be checked by a direct calculation.
Moreover, as we show  in  \Href{Lemma}{lem:resultant_complex} below, the resultant is reducible if and only if $\zeta_1^2-\zeta_2^2 = 1.$
\subsection{Symmetries}
Here we discuss some symmetries hidden in our equations. 

First of all, the symmetry $(z, w_1, u, w_2) \to  (-z, -w_1, -u, -w_2)$ or, equivalently, in terms of the dihedral angles 
$(\phi, \psi_1, \theta,  \psi_2) \to (-\phi, -\psi_1, -\theta,  -\psi_2),$ is just the symmetry with respect to a plane of the base quadrilateral. Clearly, it does not change a configuration. 

Secondly, our equations allow us to make the following substitution for planar angles at  vertex $A_i$ of the base quadrilateral:
\begin{enumerate}
\item  $(\alpha_i, \gamma_i) \to ( \alpha_i - \pi,  \gamma_i - \pi);$
\item  $(\alpha_i, \beta_i) \to ( \alpha_i - \pi,   \pi - \beta_i);$
\item $(\gamma_i, \beta_i) \to ( \gamma_i - \pi,   \pi - \beta_i).$
\end{enumerate}
Clearly, the corresponding spherical quadrilateral remains elliptic, and the involution factors do not change.  These symmetries may help with avoiding self-intersections of a polyhedron.
\subsection{Elliptic parameterization}
In \cite{izmestiev2016classification} the author parameterized a solution of \eqref{eqn:OrthoPolynomial} in terms of the Jacobi elliptic sines. Using the results of that paper, we provide  a sketch of the proof  of the following result in which we give an elliptic parameterization of the solution set of system 
\eqref{eqn:main_system_with_la_mu_signed}. A comprehensive description of the OAI flexible Kokotsakis polyhedra with the use of elliptic functions can be found in \cite{IzmFlexKokinPrep}.
\begin{thm}
\label{thm:real_conf_space_ellipttic}
Let  $\lambda_i, \mu_i, \nu_i$ satisfy systems \eqref{eqn:SystemProporResultants}, \eqref{eqn:LambdaEq}
	and assumption \eqref{eqn:choosing_sign_pattern}. 
	Then system \eqref{eqn:main_system_with_la_mu_signed} has a real non-trivial one-parametric set of solutions  if and only if
		$\zeta_1 \coloneqq \cfrac{|\nu_1|}{4\sqrt{\lambda_1\mu_1}}  > 1$.
The solution set  has four branches, which, up to the symmetry
	$(z, w_1, u, w_2) \to - (z, w_1, u, w_2)$,
	are given by
	\begin{equation}
	\label{eqn:sol_reduced_systemelllliptic}
		\begin{cases} 
			z   =  \sgn \nu_1 \cdot \sqrt{\lambda_1k} \sn{\left(K + i t \right)}, \\
			w_1 =  \sqrt{\mu_1k} \sn{\left(K + \frac{i K'}{2} + i t  \right)}, \\
			u = i \sgn \nu_4  \cdot \sqrt{-\lambda_3 k}
			\sn\Big(K + \frac{i K'}{2} \pm \sgn(\nu_1\nu_2) \left(K-\frac{i K'}{2} \right) + i t \Big),\\
			w_2 = i \sgn (\nu_1 \nu_2) \cdot \sqrt{- \mu_3 k},
		\sn\Big(K \mp \sgn(\nu_3\nu_4) \left( K-\frac{i K'}{2}\right) + i t  \Big) 
		\end{cases}
	\end{equation}
where  $k$ is the elliptic modulus given by
$k =  \left(\zeta_1  -  \sqrt{\zeta_1^2 - 1}\right)^2,$
$K$ and $\frac{i K'}{2}$ are the quarter periods of elliptic sine with modulus $k$;
$t \in [0, 2 K')$ is a parameter and the choice of $\pm$ and $\mp$ in $u$ and $w_2$ is simultaneous.
\end{thm}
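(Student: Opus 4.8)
The plan is to deduce the statement from \Href{Theorem}{thm:real_conf_space} together with the single–equation elliptic parameterization of \eqref{eqn:OrthoPolynomial} established in \cite{izmestiev2016classification}. The existence criterion $\zeta_1 > 1$ and the count of exactly four branches (up to the global sign symmetry) are already part of \Href{Theorem}{thm:real_conf_space}, since both theorems describe the very same solution set of \eqref{eqn:main_system_with_la_mu_signed}. Hence it suffices to check that the expressions in \eqref{eqn:sol_reduced_systemelllliptic} satisfy all four equations $P_1=P_2=P_3=P_4=0$, and that, as $t$ ranges over $[0,2K')$ and the sign choices $\pm,\mp$ vary, they trace out four distinct branches; the identification with the branches of \Href{Theorem}{thm:real_conf_space} is then automatic by the branch count.

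First I would recall the basic building block from \cite{izmestiev2016classification}: a single orthodiagonal relation $(z^2+\lambda)(w^2+\mu)=\nu z w$ is uniformized by writing $z$ and $w$ as $\sn$ of arguments that differ by a fixed complex shift, where the modulus and the shift are determined by $\lambda,\mu,\nu$. Concretely, for the equation $P_1=0$ with positive factors $\lambda_1,\mu_1>0$ one takes $z=\sgn\nu_1\sqrt{\lambda_1 k}\,\sn(K+it)$ and $w_1=\sqrt{\mu_1 k}\,\sn(K+\tfrac{iK'}{2}+it)$, so that the phase difference between the two is the imaginary half-period $\tfrac{iK'}{2}$. Substituting these into $P_1$ and clearing denominators reduces, via the addition theorem $\sn(\xi+\eta)\sn(\xi-\eta)=\tfrac{\sn^2\xi-\sn^2\eta}{1-k^2\sn^2\xi\,\sn^2\eta}$, to an identity in $\sn(it)$ precisely when $k=\bigl(\zeta_1-\sqrt{\zeta_1^2-1}\bigr)^2$. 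This pins down the modulus once and for all, and it also matches the real period $2K'$ of $\sn(\,\cdot\,+it)$ in $t$.

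The decisive structural point is that the resultant conditions \eqref{eqn:resultants_sys_zeta}, namely $\zeta_1^2=\zeta_3^2$, $\zeta_2^2=\zeta_4^2$ and $\zeta_1^2-\zeta_2^2=1$, force all four equations to be uniformized by one and the same modulus $k$; the differing signatures of the quadratic factors in $P_2,P_3,P_4$ (the flips $\lambda_1\to-\lambda_1$, the two occurrences of $\mu_3$ with opposite signs, and so on) are absorbed by translating the argument of $\sn$ by $\tfrac{iK'}{2}$ and by the real quarter-period $K$. Using $\sn(\xi+iK')=\tfrac{1}{k\,\sn\xi}$ together with the half-period value $\sn\tfrac{iK'}{2}=\tfrac{i}{\sqrt{k}}$, one checks that such a shift turns $\sqrt{\lambda k}\,\sn$ into an expression of the form $i\sqrt{-\lambda k}\,\sn(\,\cdot\,)$, which is exactly why the factors $i$ appear in front of $u$ and $w_2$ and why $-\lambda_3,-\mu_3$ sit under the roots there. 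Matching the accumulated phases then accounts for the terms $\pm\sgn(\nu_1\nu_2)\bigl(K-\tfrac{iK'}{2}\bigr)$ and $\mp\sgn(\nu_3\nu_4)\bigl(K-\tfrac{iK'}{2}\bigr)$, while the sign factors $\sgn\nu_i$ are dictated by the scaling \eqref{eqn:sub_for_reduced_system}.

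The step I expect to be the main obstacle is the simultaneous sign-and-phase bookkeeping for $u$ and $w_2$: one must verify that a single coupled choice of the half-period shifts satisfies $P_3=0$ and $P_4=0$ together with the cross-constraint that links $w_2$ to $z$ through the shared quadratic $(z^2-\lambda_1)$, and that the two admissible choices of $\pm$ reproduce precisely the two branches of \Href{Theorem}{thm:real_conf_space} with positive $\xin_1,\etan_1$. Equivalently, one has to confirm that the period of the parameter is genuinely $2K'$ and not a proper divisor, so that each sign choice yields a distinct branch rather than a reparameterization of the same one. Once this is settled, the four branches of \eqref{eqn:sol_reduced_systemelllliptic} coincide with those of \Href{Theorem}{thm:real_conf_space}, and the proof is complete.
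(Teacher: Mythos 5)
Your proposal is correct and follows essentially the same route as the paper's own sketch: both rest on Izmestiev's elliptic parameterization $z = p\sn(t,k)$, $w = q\sn(t+\tau,k)$ of a single orthodiagonal equation with $\tau$ a quarter-period, use the conditions \eqref{eqn:resultants_sys_zeta} to force one common modulus $k=\left(\zeta_1-\sqrt{\zeta_1^2-1}\right)^2$ across all four equations, and then match the quarter-period shifts, with the existence criterion and the four-branch count inherited from \Href{Theorem}{thm:real_conf_space}. The only cosmetic difference is that the paper pins down the common modulus by reading $(\lambda,\mu,\nu)$ as functions of $(p,q,k)$ off the table in Lemma 4.17 of \cite{izmestiev2016classification}, whereas you re-derive it by direct substitution via the addition theorem; both are sketches of the same argument, and you correctly flag the sign-and-phase bookkeeping as the residual verification.
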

\begin{proof}[Sketch of the proof.]
It was shown in \cite{izmestiev2016classification} that the solution set of 
\eqref{eqn:OrthoPolynomial} has  a parameterization of the form
\[
z = p \sn(t,k), \quad w = q \sn(t+\tau,k),
\]
where $\tau$  is a quarter-period of $\sn,$ that is, $\tau = n K + \frac{iK'}{2},$
and the amplitudes $p,q$ belong to $\mathbb{R}_+ \cup i\mathbb{R}_+.$
Moreover, there is a table with explicit formulas for $(\lambda, \mu, \nu)$ as functions of $(p,q,k)$ in Lemma 4.17 of \cite{izmestiev2016classification}:
\[
(\lambda, \mu, \nu) =
\begin{cases}
(\frac{p^2}{k}, \frac{q^2}k, \frac{2(1+k)}{k\sqrt{k}}pq), &\text{if } \tau = \frac{iK'}2\\
(\frac{p^2}{k}, \frac{q^2}k, -\frac{2(1+k)}{k\sqrt{k}}pq), &\text{if } \tau = 2K + \frac{iK'}2\\
(-\frac{p^2}{k}, -\frac{q^2}k, \frac{2i(1-k)}{k\sqrt{k}}pq), &\text{if } \tau = K + \frac{iK'}2\\
(-\frac{p^2}{k}, -\frac{q^2}k, -\frac{2i(1-k)}{k\sqrt{k}}pq), &\text{if } \tau = 3K + \frac{iK'}2
\end{cases}
\]
From  this table, we see that the elliptic modulus $k$ is a function of $\frac{\nu^2}{\lambda \mu}.$  Since $k \in (0,1)$  and by the choice of sign pattern \eqref{eqn:sign_pattern}, we get that the elliptic moduli of all four spherical quadrilaterals are the same in case of an OAI polyhedron. Then it is easy to recover what a quarter-period shift one can use in each equation of \eqref{eqn:reduced_system}.
\end{proof}

\begin{rem}
It follows that the OAI type of Kokotsakis polyhedra is a special class of the elliptic equimodular type in the notation of \cite{izmestiev2016classification}.
\end{rem}

We note  that the parameterizations from \Href{Theorem}{thm:real_conf_space_ellipttic} cannot be transformed to the parameterizations given by \Href{Theorem}{thm:real_conf_space}  using linear substitution of the parameter $t$ as the first one uses elliptic functions that are not elementary. However, under the substitution  $t \to - \frac{K'}{\pi} t,$
the parameterization given by \Href{Theorem}{thm:real_conf_space_ellipttic} is quite close to the parameterization given by \Href{Theorem}{thm:real_conf_space} for  $k$ small enough (see \Href{Figure}{fig:two_parametrization} and 
\Href{Figure}{fig:two_parametrization_diff}).

%
\section{Planar angles of  a flexible polyhedron}
\label{sec:planar}
%

In this section, we study $(\delta_1, \delta_2, \delta_3, \delta_4)$
and $(\lambda_i, \mu_i)$ that meet the first two of the geometric assumptions.
In the following theorem we manage to parameterize all such $(\lambda_i, \mu_i)$.
In fact, for solving  system \eqref{eqn:SystemProporResultants}, we need to make it more symmetrical
with a proper substitution, and we use new variables for this purpose.
We introduce these substitutions in the theorem and explain their sense later.

\begin{thm}\label{thm:la-mu-parameterized}
	Let all $\delta_i$ and $(\lambda_i, \mu_i)$
	meet the first two of the geometric assumptions. 
	Put
	\begin{equation}\label{sub:deltas_to_xyz}
		\s = \frac{\delta_1-\delta_2+\delta_3-\delta_4}{4}, \qquad
		x = \frac{\delta_1-\delta_3}{2}, \qquad
		y = \frac{\delta_2-\delta_4}{2}.
	\end{equation}
	Then there exists $\tau \in [0, 2\pi)$ and a proper choice of signs such that
	\begin{equation} \label{sub:la_mu_of_r_c}
		\lambda_i = \frac{1 \pm \sqrt{1-r_i^2} }{r_i} , \qquad
		\mu_i     = \frac{1 \pm \sqrt{1-c_i^2} }{c_i}  \qquad
			\text{for} \quad i=1,3,
	\end{equation}
	where $r_i, c_i$ are functions of $(\tau, x, y, \s)$ with the symmetries 
	\begin{align}
		\label{eqn:c_r_equations}
		\begin{aligned}
			c_1 (\tau, x, y, \s) &= r_1 (\tau + \pi, x, -y, \s), \\
			r_3 (\tau, x, y, \s) &= r_1 (\tau, -x, -y, \s),      \\
			c_3 (\tau, x, y, \s) &= r_1  (\tau + \pi, -x, y, \s).
		\end{aligned}
	\end{align}
	and $r_1$ is given by 
	\begin{equation}
		\label{eqn:r_1_parametric}
		r_1 (\tau, x, y, \s) =
			\frac{N + S\sqrt{L}}
			{2D},
	\end{equation}
	where
	\begin{align*}
		S  &= s_{10} \cos\tau  + s_{01} \sin\tau , \\
		L  &= l_{20}\cos^2 \tau + l_{11} \sin \tau \cos \tau +l_{02}\sin^2 \tau, \\
		N  &= n_{20} \cos^2 \tau  + n_{11} \sin \tau \cos \tau + n_{02}\sin^2 \tau, \\
		D  &= d_{20} \cos^2 \tau + d_{11} \sin\tau \cos\tau + d_{02} \sin^2 \tau,
	\end{align*}
	with
	\begin{align*}
		s_{10} &= \cos2x + \cos2y + 2\cos2\s,\\
		s_{01} &= \cos2x - \cos2y,\\
		l_{20} &=  (\cos2x - \cos2y)^2 + 8\cos2\s (\cos2x + \cos2y), \\
		l_{11} &= 2 (\cos 2x - \cos 2y) (\cos 2x + \cos 2y + 2\cos 2\s), \\
		l_{02} &= (\cos 2x + \cos 2y - 2\cos 2\s)^2,\\
		n_{20} &= \sin(x+y) \big(\sin(x-3y) + \sin(3x-y) 
			      + 6\sin(x-y+2\s) - 2\sin(x-y-2\s) \big), \\
		n_{11} &= 8 \left(\sin^2(x+\s) \cos^2(x-\s) + \sin^2(y-\s) \cos^2(y+\s)\right), \\
		n_{02} &= (\cos2y - \cos2x) (\cos2x + \cos2y - 2\cos2\s),\\
		d_{20} &= \cos(x-y) \big( \cos(3x+y) + \cos(x+3y) 
		          + 2\cos(x+y-2\s) + 4\cos(x+3\s) \cos(y-\s) \big), \\
		d_{11} &= \cos4x - \cos4y - 4 \sin(x+y) \sin(x-y+2\s), \\
		d_{02} &= \sin(x-y) \big( \sin(x+3y) - \sin(3x+y) 
		          + 2\sin(x+y-2\s) - 4\cos(x+3\s) \sin(y-\s)  \big).
	\end{align*}
\end{thm}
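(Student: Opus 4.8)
The plan is to reduce the three equations of \eqref{eqn:SystemProporResultants} to a single explicit quadratic for one unknown and then parameterize its solution by $\tau$. First I would use \eqref{eqn:LambdaEq} to eliminate $\lambda_2,\mu_2,\lambda_4,\mu_4$, leaving the four unknowns $(\lambda_1,\mu_1,\lambda_3,\mu_3)$ subject to three equations, so that the solution set is generically one-dimensional. The shape of \eqref{sub:la_mu_of_r_c} dictates the right coordinates: since $\lambda=\frac{1\pm\sqrt{1-r^2}}{r}$ is equivalent to $r=\frac{2\lambda}{\lambda^2+1}$ (the two signs being the two roots $\lambda,\lambda^{-1}$), I would set $r_i=\frac{2\lambda_i}{\lambda_i^2+1}$ and $c_i=\frac{2\mu_i}{\mu_i^2+1}$. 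The identities $\frac{(\lambda-1)^2}{\lambda}=\lambda+\lambda^{-1}-2=\frac{2(1-r)}{r}$ and $\frac{(\lambda+1)^2}{\lambda}=\frac{2(1+r)}{r}$, applied with $\nu_i$ from \eqref{eqn:def_nu}, turn each $\frac{\nu_i^2}{\lambda_i\mu_i}$ into $\frac{4(1-r_i)(1-c_i)}{r_ic_i\cos^2\delta_i}$; together with the consequences $r_2=-r_1$, $c_2=-c_3$, $r_4=-r_3$, $c_4=-c_1$ of \eqref{eqn:LambdaEq}, every coefficient is expressed through $r_1,c_1,r_3,c_3$.

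Next I would symmetrize the coefficients. Using $\delta_1+\delta_2+\delta_3+\delta_4=2\pi$ together with \eqref{sub:deltas_to_xyz}, one writes $\delta_1=\tfrac\pi2+\s+x$, $\delta_3=\tfrac\pi2+\s-x$, $\delta_2=\tfrac\pi2-\s+y$, $\delta_4=\tfrac\pi2-\s-y$, so that $\cos^2\delta_1=\sin^2(\s+x)$, $\cos^2\delta_3=\sin^2(\s-x)$, $\cos^2\delta_2=\sin^2(\s-y)$, $\cos^2\delta_4=\sin^2(\s+y)$. I would then record which two angles each factor sees: $\lambda_1$ pairs with $\delta_1,\delta_2$ and $\mu_1$ with $\delta_1,\delta_4$, while $\lambda_3$ pairs with $\delta_3,\delta_4$ and $\mu_3$ with $\delta_3,\delta_2$. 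The relabeling $1\leftrightarrow3$, $2\leftrightarrow4$ is a symmetry of the whole system and corresponds to $(x,y)\to(-x,-y)$, whereas interchanging the two coordinates at a vertex (swapping the roles of $\lambda$ and $\mu$) corresponds to the shift $\tau\to\tau+\pi$ together with the reflection of $x$ or $y$ that exchanges the relevant pair of angles. Tracking these two symmetries through the computation produces exactly the relations \eqref{eqn:c_r_equations}, so that it suffices to determine the single function $r_1$.

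Finally I would solve for $r_1$. Feeding \eqref{eqn:c_r_equations} back into the three equations expresses $c_1,r_3,c_3$ through one function, and eliminating the remaining variables collapses \eqref{eqn:SystemProporResultants} to a single quadratic relation for $r_1$. Introducing $\tau$ as the parameter along the one-dimensional solution curve via a trigonometric parameterization makes the coefficients of this quadratic the homogeneous forms $D,N$ in $(\cos\tau,\sin\tau)$ from the statement: the equation reads $D\,r_1^2-N\,r_1+E=0$ with $E=\frac{N^2-S^2L}{4D}$, whose discriminant is precisely $S^2L$, giving $r_1=\frac{N\pm S\sqrt L}{2D}$, which is \eqref{eqn:r_1_parametric}.

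I expect the main obstacle to be the explicit elimination that yields the coefficient polynomials $s_{10},\dots,d_{02}$, and in particular verifying that $D$ divides $N^2-S^2L$, so that $E$ is again a quadratic form in $(\cos\tau,\sin\tau)$ and the discriminant factors cleanly as $S^2L$. I would perform this bulky step with symbolic computation and then present the outcome as a verification: substituting \eqref{eqn:r_1_parametric} together with \eqref{eqn:c_r_equations} into \eqref{eqn:SystemProporResultants} (equivalently into \eqref{eqn:resultants_sys_zeta}) and confirming that, as $\tau$ ranges over $[0,2\pi)$, one recovers the entire solution set.
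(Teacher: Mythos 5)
Your proposal is correct and takes essentially the same route as the paper's proof: the same substitution $r_i=\frac{2\lambda_i}{\lambda_i^2+1}$, $c_i=\frac{2\mu_i}{\mu_i^2+1}$ converting \eqref{eqn:SystemProporResultants} into $Z_1+Z_2=4$, $Z_1=Z_3$, $Z_2=Z_4$ with $\tau$ entering through the linear split $Z_1=2+2\tan\tau$, $Z_2=2-2\tan\tau$; elimination via the linear-fractional (M\"obius) dependence of $c_i$ on $r_i$ in each equation, yielding a quadratic in $r_1$; the symmetry and phase-shift $\tau\to\tau+\pi$ explanation of \eqref{eqn:c_r_equations}; and delegation of the bulky verification to a computer algebra system. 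Your observation that the discriminant must factor as $S^2L$ (equivalently, that the quartic radicand acquires a full square factor) is exactly the paper's key computational point, which it likewise attributes to the constraint $\delta_1+\delta_2+\delta_3+\delta_4=2\pi$ and verifies in Mathematica.
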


It is very intriguing that for each value of $\tau$ in this theorem such that all values under square roots are positive,
system \eqref{eqn:main_system_with_la_mu_signed} has real non-trivial solutions.

\begin{lem}
\label{lem:proper_tau}
	Let $\delta_i$ and $(\lambda_i, \mu_i)$
	meet the first two of the geometric assumptions.
	Then \eqref{eqn:sign_pattern} is the sign pattern of $(\lambda_i, \mu_i)$
	up to  re-enumeration of vertices \eqref{eqn:sub_for_reduced_system} and 
	$\frac{\nu_j^2}{\lambda_j\mu_j} > 16$ for $j$ such that $\lambda_j, \mu_j > 0$. 
\end{lem}

\subsection{A new symmetric system}

System \eqref{eqn:SystemProporResultants} can be rewritten in the following form:
\begin{equation}
	\label{eqn:res_system_with_lambdas}
	\begin{cases}
		\cfrac{(\lambda_1-1)^2(\mu_1-1)^2}{\lambda_1\mu_1\cos^2\delta_1} +
			\cfrac{(\lambda_2 - 1)^2(\mu_2-1)^2}{\lambda_2\mu_2\cos^2\delta_2} = 16,\\[10pt]
		\cfrac{(\lambda_1-1)^2(\mu_1-1)^2}{\lambda_1\mu_1\cos^2\delta_1} =
			\cfrac{(\lambda_3-1)^2(\mu_3 - 1)^2}{\lambda_3\mu_3\cos^2\delta_3},\\[10pt]
		\cfrac{(\lambda_2 - 1)^2(\mu_2-1)^2}{\lambda_2\mu_2\cos^2\delta_2} =
			\cfrac{(\lambda_4 - 1 )^2(\mu_4 -1)^2}{\lambda_4\mu_4\cos^2\delta_4}.
	\end{cases}
\end{equation}

Make a substitution
\begin{equation}
	\label{eqn:r_c_a_definitions}
	r_i =\frac{2\lambda_i}{\lambda_i^2+1},	\qquad
	c_i =\frac{2\mu_i}{\mu_{i}^2+1} 			\qquad 
	a_i = \frac{1}{\cos^2 \delta_i},
\end{equation}
and let 
\begin{equation}\label{eqn:Z_i_of_zeta}
 	Z_i=a_i(r_i^{-1}-1)(c_i^{-1}-1) = 4 \frac{\nu_i^2}{\lambda_i\mu_i}
 		= \frac{(\lambda_i-1)^2(\mu_i-1)^2}{4 \lambda_i\mu_i\cos^2\delta_i}.
\end{equation}

Then system \eqref{eqn:res_system_with_lambdas} is equivalent
to the system  of the following three equations 
$Z_1 + Z_2 = 4$, $Z_1 = Z_3$ and $Z_2 = Z_4$.
Adding a linear dependent equation,
system \eqref{eqn:res_system_with_lambdas}
can be rewritten in the following equivalent way 
\begin{equation}
\label{eqn:system_Z}
	\frac{1}{2} Z_i = 1 - (-1)^i  \tan \tau,
\end{equation}
where $\tau$ is parameter. 

System \eqref{eqn:LambdaEq} takes the form:
\begin{equation}
	\label{eqn:signs_r_c}
	r_1 = -r_2, \qquad c_1 = -c_4, \qquad c_2 = -c_3, \qquad r_3 = -r_4.
\end{equation}

The equations of system \eqref{eqn:system_Z} are linear in both $r_i$ or $c_i$.
Since, by the definitions, $\lambda_i$, $\mu_i$ are real and $\nu_i \neq 0$, 
we obtain a restriction on the values of  $r_i$, $c_i$: 
\begin{equation}
\label{eqn:restriction_R_C}
	-1 \leq  r_i,c_i < 1.
\end{equation}

It is clear that $(r_i, c_i)$ has the same sign pattern as $(\lambda_i, \mu_i)$
and that substitution \eqref{eqn:r_c_a_definitions} is inverse to \eqref{sub:la_mu_of_r_c}.

\subsection{Proof of \Href{Theorem}{thm:la-mu-parameterized}}

\begin{proof}
	Restriction \eqref{eqn:restriction_R_C} yields positivity 
	of radical expressions in \eqref{sub:la_mu_of_r_c}.
	So, it suffices to prove identities 
	\eqref{eqn:c_r_equations} and \eqref{eqn:r_1_parametric}.
	We start with the latter one.

	Using exclusion technique
	to find $r_1 (\tan \tau)$ from system \eqref{eqn:system_Z}
	is equivalent to finding roots of an equation of the second degree with coefficients determined by the coefficients of the system \eqref{eqn:system_Z}.
	Indeed, from the equation $Z_i = \operatorname{const}$,
	we see that functions $c_i (r_i)$ or $r_i (c_i)$ are   linear fractional transformations.
	Since composition of two linear fractional  transformations is a linear fractional  transformation,
	finding $c_1(r_1)$, $r_3 (c_1)$ and $c_2(r_1)$, $r_3 (c_2)$,
	we obtain two different M\"obius transformation functions for $r_3(r_1).$ 
	Therefore, general solution has a form: 
	\begin{equation}
		\label{eqn:r_1_tan_tau}
		r_1( \tan \tau) = \frac{P(\tan\tau)\pm\sqrt{Q( \tan\tau)}}{R( \tan \tau)},
	\end{equation} 
	where $P$ and $R$ are polynomials of 2nd degree and $Q$ is a polynomial of 4th degree. 
	However, we noticed that $Q$ always have a full square as a multiplier,
	when $\sum\delta_i=2\pi$.
	Using  our substitution \eqref{sub:deltas_to_xyz},
	we managed to factorize $Q$, merge two branches in \eqref{eqn:r_1_tan_tau}
	and  obtain \eqref{eqn:r_1_parametric}. 
	The explicit formulas for the coefficients were found and can be verified
	with the use of a system of computer algebra.
	We used Wolfram Mathematica 11 for this purpose.

	Since equation \eqref{eqn:r_1_tan_tau} is obtained by the exclusion technique,
	it describes all possible $r_1$.
	Note that it has two branches, however we have only one in \eqref{eqn:r_1_parametric}.
	So we need explain how we managed to merge the branches.
	The radical expression in \eqref{eqn:r_1_tan_tau} equals to  $\frac{S^2 L}{\cos^4 \tau}$.
	Since $S$ is the only function that have linear terms in $\cos \tau$ and $\sin \tau$,
	substitution $\tau \to \tau + \pi$ changes the sign of $S$
	and preserves the values of $N$, $L$, $D$ in \eqref{eqn:r_1_parametric},
	that is, it merges two branches of \eqref{eqn:r_1_tan_tau}.

	Let us explain identities \eqref{eqn:c_r_equations}.
	There are no more solutions as each equation of \eqref{eqn:system_Z}
	is linear in either $r_i$ or $c_i$.
	Apart from checking \eqref{eqn:c_r_equations} with direct calculations,
	we provide the following arguments.
	The dependencies on $x$, $y$, $\s$ arise from re-enumeration of vertices,
	Explanation of a ``phase shift'' of $\tau$ is trickier.
	By the symmetry of the system \eqref{eqn:system_Z},
	it is enough to explain a ``phase shift'' for $c_1$.
	Fixing $r_1$, we have exactly one possible value for $c_1$ by the equation 
	$Z_1 = 2 - 2 \tan \tau$ and two possible branches in \eqref{eqn:r_1_tan_tau}
	rewritten for $c_1$.
	The ``phase shift'' $\tau \to \tau + \pi$ corresponds to the right choice of the branch. 
\end{proof}

\subsection{Proof of \Href{Lemma}{lem:proper_tau} and \Href{Theorem}{thm:existence}}

\begin{proof}[Proof of \Href{Lemma}{lem:proper_tau}]
	By the first equation in \eqref{eqn:res_system_with_lambdas},
	either $r_1$, $c_1$ or $r_2$, $c_2$ are both non-zero and have the same sign. 
	Again, by Pigeonhole principle, it implies that there is a pair with positive numbers. 
	Without loss of generality, let it be pair $r_1$, $c_1$.
	By the second equation, the latter implies that $r_3$, $c_3$ has the same sign. 
	We claim that they are  negative, and, therefore,
	we have the case of the right sign pattern.

	Assume the contrary that $r_3, c_3 > 0.$ 
	By the definition of $Z_1$ and system \eqref{eqn:system_Z}, we have
	\[
		2 + 2\tan \tau  = Z_1 = \frac{1}{\cos^2 \delta_1}
		\left(\frac1{r_1} - 1\right) \left(\frac1{c_1} - 1 \right) > 0,   
	\]    
	or, equivalently, $2 >  - 2 \tan \tau$.
	Then, by the definition of $Z_2$, we have
	\[
		Z_2 = \frac{1}{\cos^2 \delta_2}(1/r_1 + 1) (1/c_3 + 1) >
			(1/r_1 + 1) (1/c_3 + 1) > 4 > 2  - 2 \tan \tau,
	\]
	which contradicts system \eqref{eqn:system_Z}.
	Therefore, $r_3, c_3 < 0$. 
	Then, by system \eqref{eqn:system_Z} and \eqref{eqn:Z_i_of_zeta},
	we obtain
	\[
		\frac{\nu_1^3}{\lambda_3\mu_3}= \frac{\nu_1^1}{\lambda_1\mu_1} =
		\frac{1}{4} Z_3 = \frac{1}{4 \cos^2 \delta_3}
			\left(\frac{1}{|r_3|} + 1\right)\left(\frac{1}{|c_3|} + 1\right) > 1.
	\]
\end{proof}

\Href{Theorem}{thm:existence} is a direct consequence of this lemma.

\begin{proof}[Proof of \Href{Theorem}{thm:existence}]
	To proof the theorem we need to check the following
	\begin{itemize}
	\item
		spherical quadrilaterals with sides
		$(\alpha_i, \beta_i, \gamma_i, \delta_i)$ exist;
	\item
		system \eqref{eqn:main_system_with_la_mu_signed} has non-trivial solution.
	\end{itemize}
	The first assumption follows from \Href{Lemma}{lem:existence_spherical_quad},
	the second is guaranteed by \Href{Lemma}{lem:proper_tau}.
\end{proof}

\section{Algorithm}\label{sec:alg}

In the following Theorem, we summarize all needed steps
to construct a flexible Kokotsakis polyhedron of the orthodiagonal anti-involutive type
and to describe its flexion using given values of the angles of the base quadrilateral. 
We are always looking for angles in interval $(0, \pi).$ 
However, one can allow to the angles $\alpha_i$ and $\gamma_i$
to be in $ (0, \pi) \cup (\pi, 2\pi).$
\begin{thm}\label{thm:algorithm}
	Given parameters $(\delta_1, \delta_2, \delta_3, \delta_4)$,
	the algorithm of constructing a flexible Kokotsakis polyhedron
	of the orthodiagonal anti-involutive type is the following:
	\begin{enumerate}
	\item
		Check whether $(\delta_1, \delta_2, \delta_3, \delta_4)$
		meet the first of the geometric assumptions, that is,
		they form a quadrilateral without right angles.
	\item
		Calculate $(x,y,\s)$ using substitution \eqref{sub:deltas_to_xyz}.
	\item
		Check whether there exists $\tau$ such that $r_1$, $r_3$, $c_1$, $c_3$
		given by \eqref{eqn:r_1_parametric} and \eqref{eqn:c_r_equations}
		satisfy inequality \eqref{eqn:restriction_R_C} (see \Href{Subsection}{subsec:screening} for how this can be done).
		Set $r_2 = - r_1$, $r_4 = - r_3$, $c_2 = -c_3$, $c_4 = -c_1$.
	\item
		Calculate the angles $\alpha_i$ and $\gamma_i$ from  
		\[
			\tan \alpha_i = \sigma^\alpha_i \sqrt{\frac{1- r_i}{1+r_i}} \tan \delta_i
				\quad \text{and} \quad 
			\tan \gamma_i = \sigma^\gamma_i \sqrt{\frac{1- c_i}{1+c_i}} \tan \delta_i,
		\]
		where $\sigma^\bullet_i=\pm1$ are chosen in a way to satisfy:
		$\sigma^\alpha_1\sigma^\alpha_2=
		\sigma^\alpha_3\sigma^\alpha_4=
		\sigma^\gamma_1\sigma^\gamma_4=
		\sigma^\gamma_2\sigma^\gamma_3=1$.
		Select angles to be from $0$ to $\pi$.
	\item
		Check that  $\frac{\cos \alpha_i \cos \gamma_i}{\cos \delta_i} \in (-1,1)$. 
		Find $\beta_i$ using \eqref{eqn:beta_cond}.
	\item
		Check that obtained spherical quadrilaterals $Q_i$ with sides 
		$(\alpha_i, \beta_i, \gamma_i, \delta_i)$ are elliptic.
	\item
		Find $\lambda_i, \mu_i, \nu_i$ using formulas
		\eqref{eqn:def_la}, \eqref{eqn:def_mu} and \eqref{eqn:def_nu}.
	\item
		Re-enumerate vertices to get \eqref{eqn:sign_pattern} as a sign pattern of the system. 
	\item
		Find $(z, w_1, u, w_2)$
		using formulas of \Href{Theorem}{thm:real_conf_space}.
		Calculate dihedral angles using 
		\[
			(\phi, \psi_1, \theta, \psi_2) =
				(2\arctan z, 2\arctan w_1, 2\arctan u, 2\arctan w_2).
		\]
	\end{enumerate}
\end{thm}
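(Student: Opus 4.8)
The plan is to recognize that \Href{Theorem}{thm:algorithm} is not an independent assertion but a certificate that Steps 1--8 manufacture precisely a tuple $(\delta_i, \lambda_i, \mu_i)_1^4$ satisfying all four geometric assumptions of \Href{Definition}{def:geom_req}, after which \Href{Theorem}{thm:existence} and \Href{Theorem}{thm:real_conf_space} supply the conclusion. I would therefore proceed stepwise, verifying that the output of each stage is exactly the input demanded by the corresponding earlier result. The direct checks are immediate: Step 1 is the first geometric assumption verbatim, Step 6 is the fourth, and Step 2 is merely the change of variables \eqref{sub:deltas_to_xyz}. The substantive content lives in Steps 3--5.

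For Step 3 I would invoke \Href{Theorem}{thm:la-mu-parameterized}: the quantities $(r_1,r_3)$ from \eqref{eqn:r_1_parametric} and $(c_1,c_3)$ from the symmetries \eqref{eqn:c_r_equations}, completed by $r_2=-r_1$ etc.\ according to \eqref{eqn:signs_r_c}, range over \emph{all} $(\lambda_i,\mu_i)$ obeying \eqref{eqn:LambdaEq} and \eqref{eqn:SystemProporResultants} as $\tau$ varies. Imposing the feasibility bound \eqref{eqn:restriction_R_C} is exactly the condition that the recovered $\lambda_i,\mu_i$ be real with $\nu_i\neq0$, so any admissible $\tau$ yields data fulfilling the second geometric assumption. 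For Steps 4--5 I would establish the dictionary between the $(\lambda,\mu)$-recovery formulas \eqref{eqn:a_g_of_la_mu} and the $(r,c)$-formulas in the algorithm. The key identity is $\frac{\lambda_i-1}{\lambda_i+1} = \pm\sqrt{\frac{1-r_i}{1+r_i}}$ (with the sign tracking the branch chosen in \eqref{sub:la_mu_of_r_c}), which follows from the elementary factorization $1 - r + \sqrt{1-r^2} = \sqrt{1-r}\,\bigl(\sqrt{1-r}+\sqrt{1+r}\bigr)$ together with its image under $r\mapsto -r$, and analogously for $\mu_i$ and $c_i$. I would then show that the sign constraints $\sigma^\alpha_1\sigma^\alpha_2 = \sigma^\gamma_1\sigma^\gamma_4 = \cdots = 1$ are precisely what forces the branch choices at paired vertices to be compatible with \eqref{eqn:LambdaEq}: since $r_2=-r_1$, the requirement $\lambda_1=-\lambda_2$ selects the \emph{same} branch at vertices $1$ and $2$, i.e.\ equal signs. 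Step 5 then realizes the third geometric assumption by computing $\beta_i$ from \eqref{eqn:beta_cond} once its right-hand side lies in $(-1,1)$.

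With all four geometric assumptions secured, \Href{Theorem}{thm:existence} guarantees that the spherical quadrilaterals $Q_i$ assemble into an existing, flexible OAI polyhedron. Step 8 re-enumerates vertices to the normal form \eqref{eqn:sign_pattern}--\eqref{eqn:choosing_sign_pattern}, which is legitimate by \Href{Lemma}{lem:sign_patterns} and is exactly the hypothesis under which \Href{Theorem}{thm:real_conf_space} applies; Step 9 then reads off the flexion. Here I would note that the nontriviality condition $\zeta_1>1$ required by \Href{Theorem}{thm:real_conf_space} is automatic, since \Href{Lemma}{lem:proper_tau} shows $\nu_j^2/(\lambda_j\mu_j)>16$ on the positive pair whenever the first two geometric assumptions hold.

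The main obstacle I anticipate is not any single deep step but the bookkeeping of signs and branches in Step 4: making the $\pm$ branch of \eqref{sub:la_mu_of_r_c}, the signs $\sigma^\bullet_i$, and the relations \eqref{eqn:LambdaEq} and \eqref{eqn:signs_r_c} mutually consistent across the four vertices, all while keeping the recovered angles in $(0,\pi)$. Everything else is a direct appeal to the theorems already proved, so the proof is essentially an assembly argument resting on that sign analysis.
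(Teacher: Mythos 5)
Your proposal is correct and follows essentially the same route as the paper, whose proof of \Href{Theorem}{thm:algorithm} is a one-line reduction to \Href{Theorem}{thm:existence} and \Href{Theorem}{thm:real_conf_space}; you simply make explicit the verification (left implicit in the paper) that each algorithmic step produces exactly the data required by the geometric assumptions of \Href{Definition}{def:geom_req}, via \Href{Theorem}{thm:la-mu-parameterized}, \Href{Lemma}{lem:proper_tau}, and \Href{Lemma}{lem:sign_patterns}. Your sign analysis in Step 4 — the identity $\frac{\lambda_i-1}{\lambda_i+1}=\pm\sqrt{\frac{1-r_i}{1+r_i}}$ and the observation that the constraints $\sigma^\alpha_1\sigma^\alpha_2=\cdots=1$ encode branch compatibility with \eqref{eqn:LambdaEq} — is accurate and a useful supplement to the paper's terse argument.
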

\begin{proof}
	If all requirements are fulfilled,
	an existence and flexibility of a Kokotsakis polyhedron
	constructing using this algorithm follows from
	\Href{Theorem}{thm:existence} and \Href{Theorem}{thm:real_conf_space}. 
\end{proof}

\begin{rem} 
	We do not check whether the obtained surface is not self-intersecting.
	There remains a possibility that at every moment during flexion,
	there are two intersecting facets.
\end{rem}

 It's possible to derive a implicit formula for a set of $(x,y,\s)$
 (or, equivalently, $(\delta_1, \delta_2, \delta_3, \delta_4)$)
 for which there exists $\tau$ such that $-1<r_i,c_i<1$
 and the angles $\beta_i$ are well-defined.
 However, the resulting formula is likely to be enormous and incomprehensive.
 We instead explain how to check the conditions and
 show the result of numerical screening in the next subsection.    

\subsection{Screening of the space of parameters $(\delta_1, \delta_2, \delta_3, \delta_4)$}\label{subsec:screening}

\begin{figure}[t]
	\includegraphics[scale=1.1]{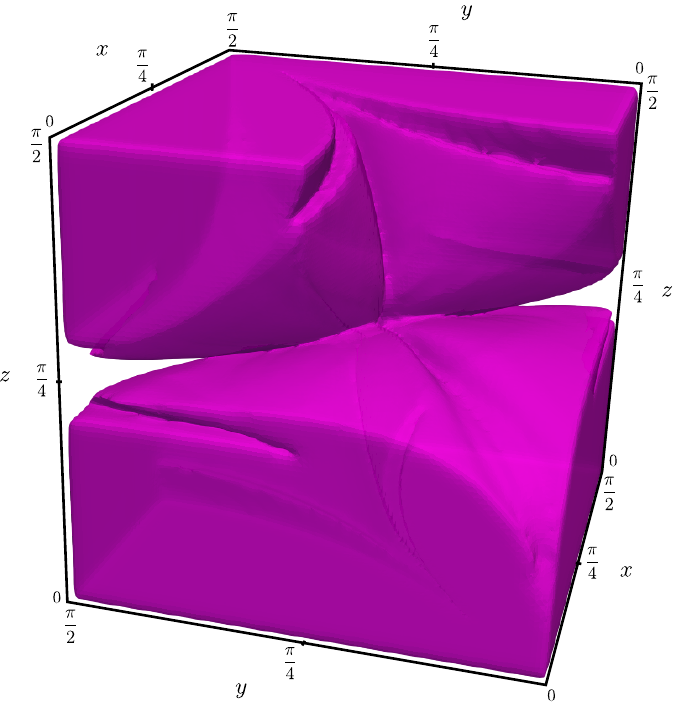}
	\caption{
	    \label{fig:screening result}
		Screening result.
		Due to the symmetry of the system, the set is centrally symmetric
		with respect to $\left(\frac{\pi}{4}, \frac{\pi}{4}, \frac{\pi}{4}\right)$
		and is symmetric with respect to the plane $x=y.$}
\end{figure}

In \Href{Theorem}{thm:algorithm}, one need to find a proper parameter $\tau$
for a given angles $(x, y, \s)$
(or, equivalently, $(\delta_1, \delta_2, \delta_3, \delta_4)$).
We run a computer screening and found numerically the set of $(x, y ,\s)$
for which there exists $\tau$ such that
$(\lambda_i, \mu_i)$ are given by \Href{Theorem}{thm:la-mu-parameterized}
and $\{\delta_1, \delta_2, \delta_3, \delta_4\}$ meet the geometric assumptions.
The result of our numerical computation is in \Href{Figure}{fig:screening result},
here we do not show surfaces on which at least one $Q_i$ is not elliptic.
\Href{Figure}{fig:screening result deltas} shows the result of screening
in $(\delta_1,\delta_3,\delta_2)$ coordinates for convex and non-convex
base quadrialareals.

We claim that it can be done with an arbitrary precision or, probably,
explicit formulas for the boundary can be found,
as all the restrictions on $\tan \tau$ are polynomial.

\begin{figure}[t]
	\includegraphics[scale=1]{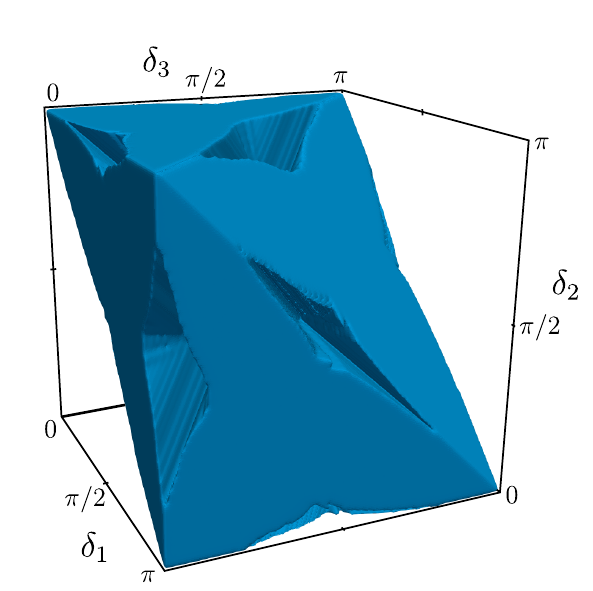}
	\includegraphics[scale=1]{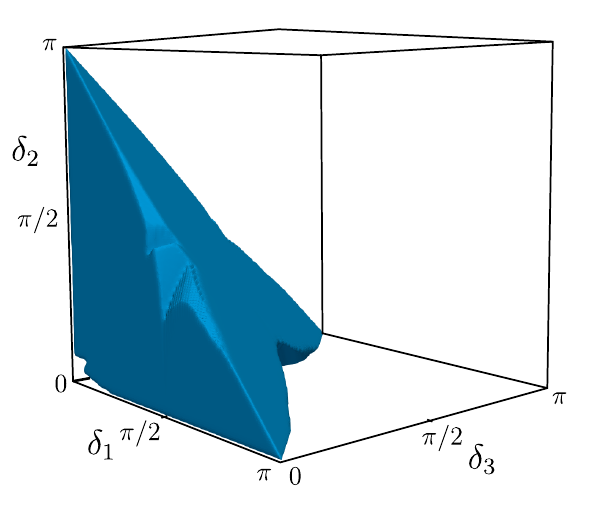}
	\caption{
	    \label{fig:screening result deltas}
		Set of $(\delta_1, \delta_2, \delta_3)$ with existing
		flexible an OAI polyhedron. On the left: with convex base quadrilateral;
		on the right: with non-convex base quadrilateral 
		($\delta_4>\pi$).
		}
\end{figure}

More precisely, 
\begin{enumerate}
	\item
		$\frac{L(\tau, x, y, z)}{ \cos^2 \tau} \geq 0$ is clearly a polynomial in $\tan \tau$.
	\item
		$r_i <  C = \operatorname{const}$ (and others comparisons for $r_i, c_i$ with constant $C$) 
		is equivalent to a proper system of the following  polynomial inequalities 
		\[
			\left(\frac{2C D  - N}{S \cos \tau}\right)^2 \geq \frac{L}{\cos^2 \tau},\qquad
			\left(\frac{2C D  - N}{S \cos \tau}\right)^2 \leq \frac{L}{\cos^2 \tau},
		\]
 		and comparison of $\frac{2C D  - N}{S \cos \tau}$,
 		$\frac{D}{\cos^2 \tau}$ and $\frac{S}{\cos \tau}$ with zero.
	\item
		$\beta_i$ is well-defined if and only if
		\[
			\frac{\cos^2 \delta_i}{\cos^2 \alpha_i \cos^2 \gamma_i} \leq 1.
		\]
		By  $\frac{1}{\cos^2 \alpha} = 1 + \tan^2 \alpha,$ the latter is
		\[
			\cos^2 \delta_i \left(\frac{1 - r_i}{1 + r_i} \tan^2 \delta_i  +1\right)
			\left(\frac{1 - c_i}{1 + c_i} \tan^2 \delta_i  +1\right) \geq 1,
		\] 
		which is equivalent to a system of polynomial inequalities.
\end{enumerate}

%
\section{Geometric properties}
\label{sec:geom_pr}
%

\subsection{The configuration space of a spherical quadrilateral}
Side lengths and angles  of a spherical quadrilateral $Q$ are denoted
in \Href{Figure}{fig:Quad_Notation_with_chi}, and $z = \tan \frac{\phi}{2},$ $w = \tan \frac{\psi}{2}$.

\begin{figure}[h]
	\begin{center}
	\begin{picture}(150,150)
	\put(0,-15){\includegraphics[scale=0.6]{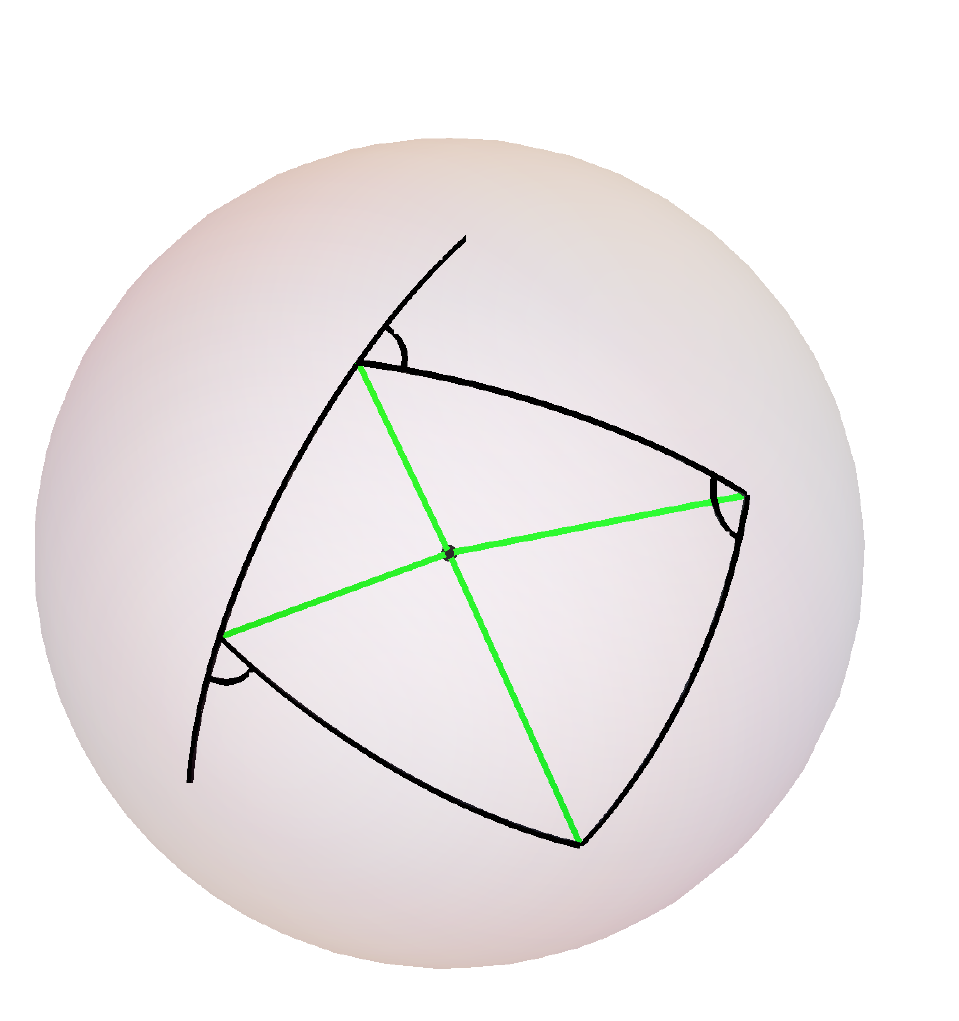}}
	\put(40,75){$\delta$}
	\put(98,94){$\gamma$}
	\put(122,40){$\beta$}
	\put(70,17){$\alpha$}
	\put(71,103){$\psi$}
	\put(37,35){$\phi$}
	\put(115,71){$\chi$}
	\end{picture}
	\end{center}
	\caption{ Spherical quadrilateral $Q$}
	\label{fig:Quad_Notation_with_chi}
\end{figure}

As was discovered by Bricard  \cite{bricard1897memoire},
the equation of the configuration space of $Q$ has the form 
    \begin{equation}
        \label{eqn:EulChasles}
        P(z,w) =  a_{22}z^2w^2 + a_{20}z^2 + a_{02}w^2 + 2a_{11}zw + a_{00} = 0,
    \end{equation}
where the coefficients are trigonometric functions of the angles of $Q.$

By allowing $z$ to take complex values, we arrive at a complex algebraic curve
    \begin{equation}
        \label{eqn:ConfSpace}
        C = \{(z, w)\in \mathbb{C}P^1 \times \mathbb{C}P^1  \mid P(z, w) = 0\}
    \end{equation}
with two coordinate projections
    \begin{equation}
        \label{eqn:CoordProj}
        P_z: C \rightarrow \CP^1 \ni z \quad
        \text{and} \quad P_w: C \rightarrow \CP^1 \ni w.
    \end{equation}
If $Q$ is of  elliptic type then $C$ is  an elliptic curve. 
The projections in \eqref{eqn:CoordProj} are
\emph{two-fold branched covers with exactly 4  points in the branch set}.
Since equation \eqref{eqn:EulChasles} is  quadratic in one variable,
one can easily find explicit formulas for points of
the branch set by solving a quadratic equation.
The complete classification of different classes of spherical quadrilaterals
and their branch sets can be found in \cite{izmestiev2016classification}
Subsection 2.4 and Lemma 4.10.

Denote by
    \begin{align*}
        i \colon C &\to C \qquad &\qquad j \colon C &\to C\\
        (z,w) &\mapsto (z,w') &\qquad (z,w) &\mapsto (z',w)
    \end{align*}
the deck transformations of $P_w$ and $P_z$. 
If $z$ and $w$ are real, we can  give a geometric interpretation of these {\it involutions}: $i$ and $j$ act by folding the quadrilateral along one of its diagonals,
see \Href{Figure}{fig:Involutions}.
This means that realizable 
branch points correspond to a degenerate case when quadrilateral becomes  a triangle.
Hence, we conclude

\begin{lem}
    \label{lem:InvolutionBranchedPoint}
    Let $Q$ be a spherical quadrilateral of  elliptic type. We use the notation as in \Href{Figure}{fig:Quad_Notation_with_chi} and put $z = \tan \frac{\phi}{2}.$
    Then $z$  belongs to the branch set
    of $P_z$ if and only if $\chi = 0 (\mathrm{mod}\, \pi).$
\end{lem}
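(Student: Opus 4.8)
The plan is to prove the statement geometrically, exactly in the spirit of the folding interpretation of the deck transformations described above, rather than by manipulating the discriminant of \eqref{eqn:EulChasles}. Recall that $P_z\colon C\to\CP^1$ is a two-fold branched cover, so a value $z_0$ lies in the branch set precisely when its two preimages collapse, i.e. when the nontrivial deck transformation $(z,w)\mapsto(z,w')$ of $P_z$ has a fixed point over $z_0$. Thus I must determine when the two values $w,w'$ of $\tan\frac\psi2$ corresponding to a fixed $\phi$ coincide, and show that this happens exactly when $\chi\equiv 0\ (\mod\pi)$.

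First I would fix the vertex labelling of \Href{Figure}{fig:Quad_Notation_with_chi}: write $A,B,C,D$ for the vertices in cyclic order with $AB=\alpha$, $BC=\beta$, $CD=\gamma$, $DA=\delta$, so that $\phi$ is the angle at $A$, $\psi$ the angle at $D$, and $\chi$ the angle at the opposite vertex $C$. The key observation is that fixing $z=\tan\frac\phi2$ fixes the spherical triangle $ABD$ (it has the two sides $\delta=DA$ and $\alpha=AB$ with included angle $\phi$), and in particular fixes the length of the diagonal $BD$. With $B$ and $D$ now pinned down, the remaining vertex $C$ is obtained from the spherical triangle $BCD$ with fixed base $BD$ and fixed sides $\beta=BC$, $\gamma=CD$; such a triangle is determined up to reflection in the great circle through $B$ and $D$. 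These two reflected positions of $C$ are exactly the two sheets of $P_z$ over $z_0$, and they yield the two values $w,w'$ of $\tan\frac\psi2$ at $D$.

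It then follows that $z_0$ is a branch value if and only if the two positions of $C$ coincide, i.e. if and only if $C$ lies on the great circle through $B$ and $D$. In that case the triangle $BCD$ is degenerate: the vertices $B,C,D$ are collinear on a great circle, so the two arcs $CB$ (of length $\beta$) and $CD$ (of length $\gamma$) issuing from $C$ lie on one and the same great circle, and hence the angle between them is $\chi=0$ or $\chi=\pi$. Conversely, $\chi\equiv 0\ (\mod\pi)$ forces $CB$ and $CD$ to be collinear, so $C$ lies on the great circle through $B$ and $D$ and the fibre degenerates. This yields the claimed equivalence.

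The step I expect to require the most care is the bookkeeping around genericity and the real/complex distinction. Concretely, I would verify that the collapse of the two positions of $C$ is the only source of ramification of $P_z$, treating separately the extremal values $z\in\{0,\infty\}$ at which the auxiliary triangle $ABD$ itself degenerates, and checking that the two values $w,w'$ are genuinely distinct away from $\chi\equiv 0\ (\mod\pi)$ (this can fail only in the exceptional position where $A$ lies on the great circle through $B$ and $D$). Finally, since $Q$ is of elliptic type the curve $C$ is elliptic and Riemann--Hurwitz forces $P_z$ to have exactly four branch points; I would confirm that the condition $\sin\chi=0$ cuts out exactly four values of $z$, so that the geometric characterization accounts for the whole branch set rather than a proper subset of it.
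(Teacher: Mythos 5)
Your proposal is correct and takes essentially the same route as the paper: the paper obtains Lemma~\ref{lem:InvolutionBranchedPoint} directly from the preceding observation that the deck transformation of $P_z$ acts by folding $Q$ along the diagonal joining the two vertices adjacent to $\phi$, so that realizable branch points are exactly the degenerate positions where the quadrilateral becomes a triangle, i.e. $\chi = 0\ (\mathrm{mod}\ \pi)$. Your write-up simply makes this folding argument explicit (fixing the triangle $ABD$, reflecting $C$ across the great circle through $B$ and $D$) and adds the routine non-degeneracy checks that the paper leaves implicit.
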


\begin{figure}[ht]
    \begin{center}
        \begin{picture}(240,200)
        \put(40,90){\includegraphics[scale=0.4]{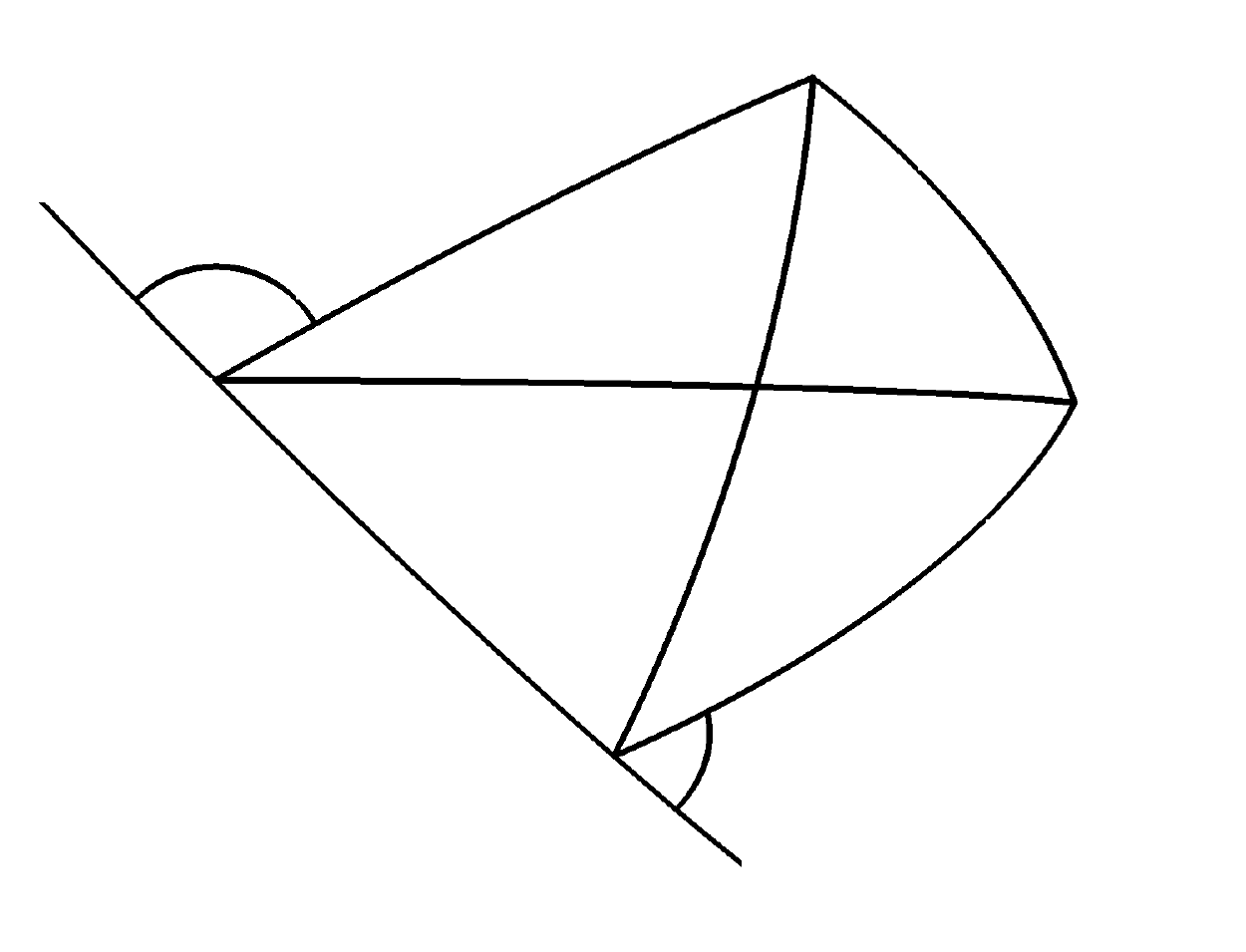}}
        \put(-70,-0){\includegraphics[scale=0.273]{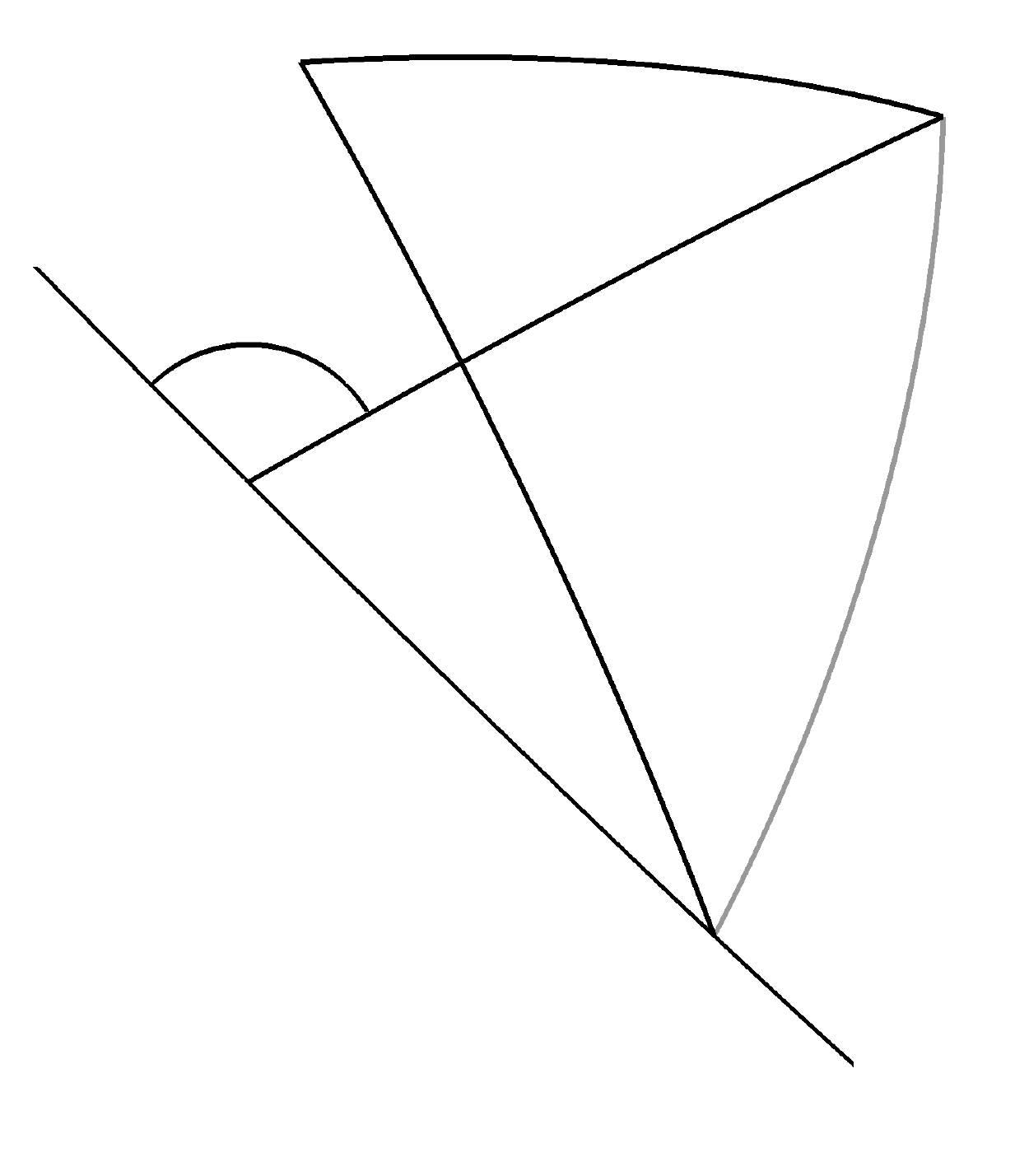}}
        \put(170,-0){\includegraphics[scale=0.4]{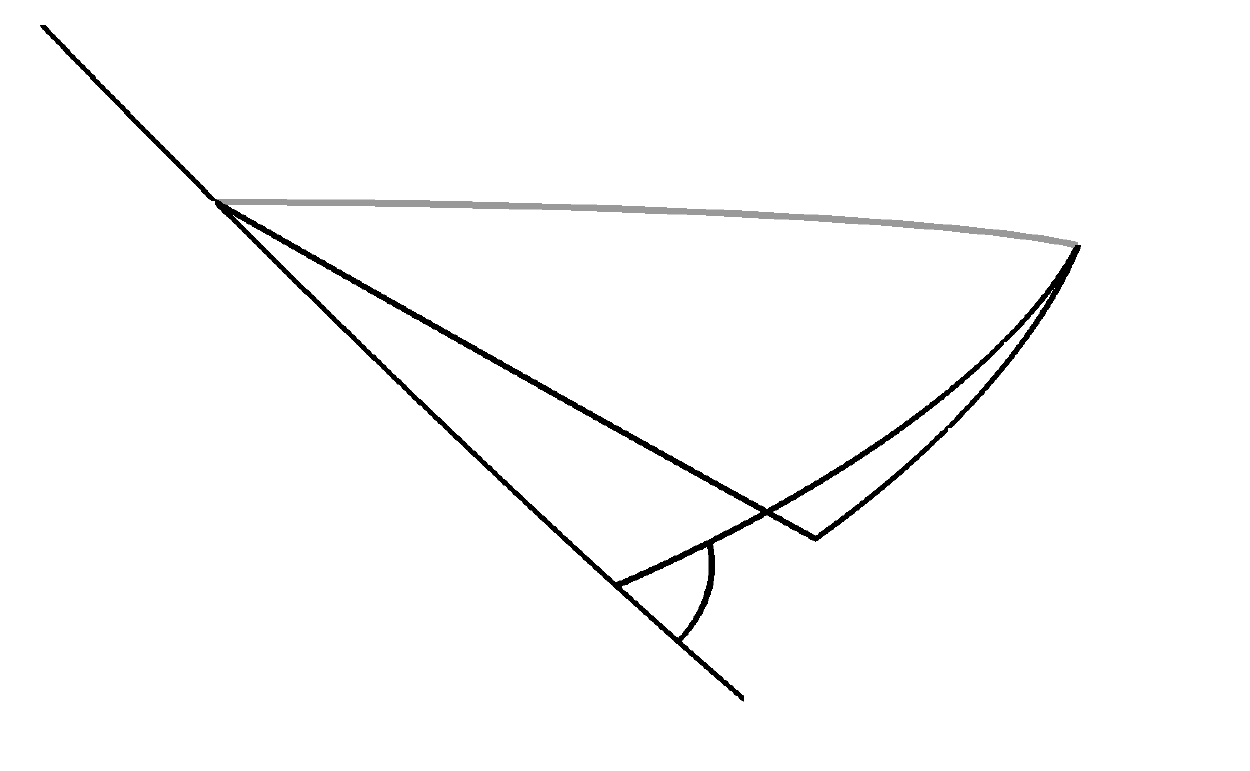}}
        \put(78,128){$\delta$}
        \put(-32,37){$\delta$}
        \put(210,37){$\delta$}
        \put(61, 173){$\psi$}
        \put(-52, 83){$\psi$}
        \put(124, 108){$\phi$}
        \put(254, 17){$\phi$}
        \put(105,171){$\gamma$}
        \put(-6,81){$\gamma$}
        \put(235,47){$\gamma$}
        \put(159,171){$\beta$}
        \put(-16,110){$\beta$}
        \put(278,30){$\beta$}
        \put(139,134){$\alpha$}
        \put(268, 42){$\alpha$}
        \put(-14, 58){$\alpha$}
        \put(158,110){\vector(2,-1){60}}
        \put(88,110){\vector(-2,-1){60}}
        \put(190,96){$i$}
        \put(58,100){$j$}
        \end{picture}
    \end{center}
    \caption{
        \label{fig:Involutions}
        Involutions $i$ and $j$ on the configuration space of a quadrilateral.
    }
\end{figure}

\subsection{Scissors-like linkage}
Two adjacent vertices  of the base quadrilateral share the common dihedral angles,
which means the corresponding  spherical quadrilaterals  are
{\it coupled} by  means of the angle (see \Href{Figure}{fig:Scissors}).
\begin{figure}
    \begin{center}
    \begin{picture}(110,100)
    \put(-50,0){\includegraphics[scale=0.6]{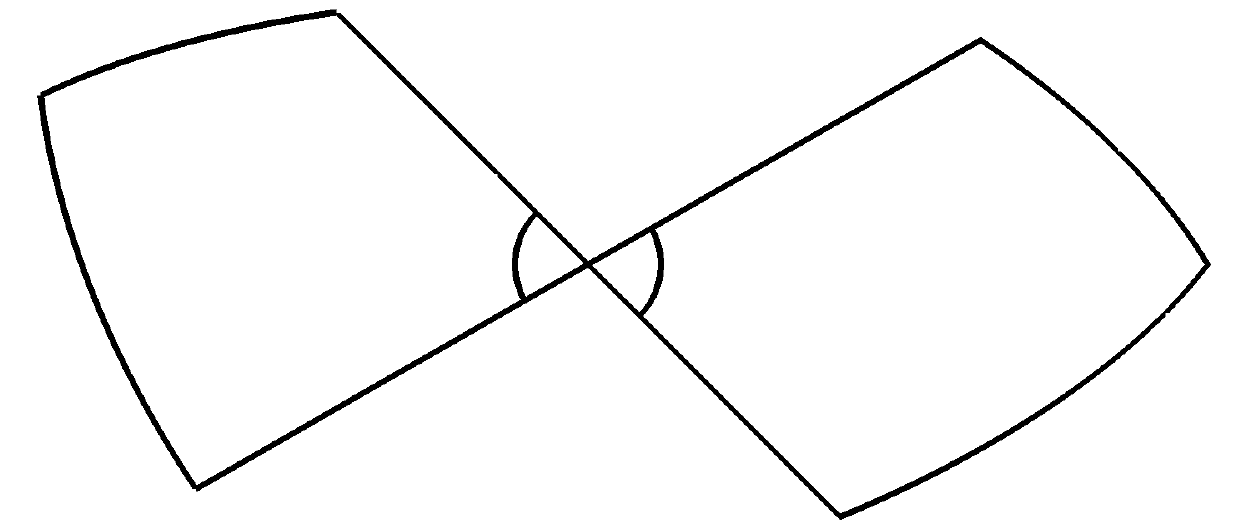}}
    \put(80,71){$\delta_1$}
    \put(140,71){$\gamma_1$}
    \put(64,17){$\alpha_1$}
    \put(130,13){$\beta_1$}
    \put(30,71){$\delta_2$}
    \put(-25,89){$\gamma_2$}
    \put(12,17){$\alpha_2$}
    \put(-48,40){$\beta_2$}
    \end{picture}
    \end{center}
    \caption{
        \label{fig:Scissors}
        Two coupled spherical quadrilaterals associated
        with the edge $A_1 A_2$. The two marked
        angles  are required to stay equal during the deformation.
    }
\end{figure}
There are four such {\it scissors-like linkages} corresponding to edges 
$A_1A_2$, $A_2A_3$, $A_3A_4$ and $A_4A_1$
of the base quadrilateral in a Kokotsakis polyhedron.
Isometric deformations of the polyhedron correspond to motions of these linkages. 
Algebraically speaking, these linkages are described
by the rows and columns of system \eqref{eqn:PolSystem},
and the system itself describes possible dihedral angles of the polyhedron.
Moreover, a Kokotsakis polyhedron is flexible if and only if the system of polynomial
equations \eqref{eqn:PolSystem} has a one-parameter family of solutions over the reals
(\cite{izmestiev2016classification}, Lemma 2.2).
One possible approach to find a solution is as follows:
\begin{enumerate}
    \item
        consider a pair of opposite edges $A_1 A_2$ and $A_3 A_4$
        together with corresponding linkages,
        which are described by systems $\{ P_1 = 0, P_2 = 0\}$ and 
        $\{P_3 = 0, P_4 = 0\},$ respectively;
    \item
        exclude common variables $z$ and $u$ in the corresponding systems
        by computing  the resultants
	$R_{12} (w_1, w_2)=\mathop{\mathrm{res}}_z(P_1, P_2)$
	and $R_{34}(w_1, w_2) = \mathop{\mathrm{res}}_u(P_3, P_4)$.
\end{enumerate}
The polyhedron is flexible if and only if the algebraic sets $R_{12} = 0$ and $R_{34} = 0$
have a common irreducible component.
This means that they are reducible or irreducible simultaneously.
Stachel and Nawratil described all flexible classes for reducible $R_{12}$ and $R_{34}$.
\begin{rem}
    Geometrically speaking, the zero set of the resultant $R_{12} (w_1, w_2)$
    gives an implicit dependence of the non-common angles
    of spherical quadrilaterals $Q_1$ and $Q_2$,
    and $R_{34}$ does the same for $Q_3$ and $Q_4$.
    The polyhedron is flexible if and only if these ``dependencies'' have a common branch,
    that is, on can join the scissors-like linkages $(Q_1, Q_2)$ and $(Q_3, Q_4)$
    during the corresponding flexion.
\end{rem}

Izmestiev \cite{izmestiev2016classification} considers the complexified configuration space of a spherical linkage
(that is, the set of all complex solutions of the corresponding polynomial system).
The following lemma is a direct consequence  of the results of  \cite{izmestiev2016classification}.  It follows from Lemma 5.1 and assertion (4) of Lemma 4.10 of that paper.
\begin{lem}
    \label{lem:resultant}
   Let  polynomial system $\{ P_1 (z, w_1) = 0 = P_2 (z, w_2)\}$ 
    describe a scissors-like linkage of two elliptic spherical quadrilaterals $Q_1$ and 
    $Q_2.$  The resultant $R_{12} (w_1, w_2)$ is reducible if and only if the branch sets of $w_1$ and $w_2$ coincide.
\end{lem}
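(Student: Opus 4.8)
The plan is to recast the resultant $R_{12}$ geometrically as the defining equation of a fibre product of two elliptic curves over their common $z$-line, and then to settle its reducibility by the classical theory of double covers of $\CP^1$. First I would set $C_1=\{P_1(z,w_1)=0\}$ and $C_2=\{P_2(z,w_2)=0\}$; by the discussion around \eqref{eqn:ConfSpace} these are elliptic curves, and by \eqref{eqn:CoordProj} each $z$-projection $P_z\colon C_i\to\CP^1_z$ is a two-fold cover whose fibres are the two $w_i$-roots of $P_i(z,\cdot)$. Its branch set $B_i\subset\CP^1_z$ is exactly the locus where these two $w_i$-roots collide (a four-point set, cf.\ \Href{Lemma}{lem:InvolutionBranchedPoint}); this is precisely what is meant by the branch set of $w_i$. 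Eliminating the common variable $z$, the zero set of $R_{12}(w_1,w_2)$ is the image under $(z,w_1,w_2)\mapsto(w_1,w_2)$ of the fibre product $X=C_1\times_{\CP^1_z}C_2$. Since a generic point of $\{R_{12}=0\}$ is realised by a single common $z$, this projection is birational onto its image, so $R_{12}$ is reducible if and only if $X$ is.

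Next I would determine the reducibility of $X$ by passing to function fields. Writing each cover as $C_i\colon y_i^2=f_i(z)$ with $f_i=\mathrm{disc}_{w_i}P_i$ a squarefree quartic whose root set is $B_i$, the function ring of $X$ is $\C(z)(\sqrt{f_1})\otimes_{\C(z)}\C(z)(\sqrt{f_2})$. This is a field---so $X$ is irreducible---exactly when $f_1$, $f_2$, $f_1f_2$ are all non-squares in $\C(z)$, and it splits into two factors---so $X$ has two components---exactly when $f_1f_2$ is a square. Over $\C$ a nonzero constant is a square, so $f_1f_2$ is a square if and only if $f_1$ and $f_2$ have identical zero divisors, i.e.\ $B_1=B_2$. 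Combining with the previous paragraph yields $R_{12}$ reducible $\iff B_1=B_2$, i.e.\ the branch sets of $w_1$ and $w_2$ coincide, which is the assertion. This is the same conclusion packaged by Lemma~5.1 together with assertion (4) of Lemma~4.10 in \cite{izmestiev2016classification}.

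The step needing the most care is the passage between the polynomial $R_{12}$ and the curve $X$. A resultant can acquire spurious factors, for instance from the leading $z$-coefficients of $P_1,P_2$ or from components over $w_i=\infty$, so I would use the elliptic (hence non-degenerate) hypothesis to confirm that each $P_i$ has $z$-degree two with the expected ramification only at the four branch points, guaranteeing that $X$ is reduced and that its projection to $(w_1,w_2)$ has degree one. I would also verify the bookkeeping of which projection carries the relevant branch set: because $z$ is the eliminated variable, the base of the fibre product is $\CP^1_z$ and the pertinent loci are the branch sets of the $z$-projections $P_z$, whose fibres are the $w_i$-values---so ``branch set of $w_i$'' is indeed the correct object rather than the branch set of the $w_i$-projection. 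Finally I would check that the four-point (hence even) cardinality makes the algebraic condition ``$f_1f_2$ is a square'' equivalent to the honest set equality $B_1=B_2$, with the point at infinity accounted for correctly.
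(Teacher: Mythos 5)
Your proposal is correct, but note that the paper itself contains no proof of \Href{Lemma}{lem:resultant}: it is stated as a direct consequence of Lemma~5.1 and assertion~(4) of Lemma~4.10 of \cite{izmestiev2016classification}, with no argument given. What you have written is, in substance, a self-contained reconstruction of the mechanism behind that citation: identify $\{R_{12}=0\}$ with the image of the fibre product $X=C_1\times_{\CP^1_z}C_2$, pass to $\C(z)(\sqrt{f_1})\otimes_{\C(z)}\C(z)(\sqrt{f_2})$, and observe that the tensor product splits into two factors precisely when $f_1f_2$ is a square in $\C(z)$, i.e.\ when $B_1=B_2$. Your bookkeeping of which branch set is meant is also the right one: since $z$ is the eliminated variable, the relevant loci are the branch sets of the $z$-projections $P_z$, viewed inside the common line $\CP^1_z$; this matches exactly how the lemma is consumed in the proof of \Href{Theorem}{thm:Stachel_conj}, where \Href{Lemma}{lem:InvolutionBranchedPoint} --- a statement about the branch set of $P_z$ --- is used to detect those $z$-values geometrically. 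The care points you flag are the genuine ones and should be carried out rather than merely announced: take the bihomogeneous resultant on $\CP^1\times\CP^1$ (the paper works projectively, cf.\ \eqref{eqn:OrthoPolynomial}), so that degeneration of the leading $z$-coefficients produces honest points over $z=\infty$ rather than spurious factors; check that $X\to\CP^1_z$ is finite (hence $X$ has no vertical components and the generic-fibre field computation sees every component) and that the projection of $X$ to $(w_1,w_2)$ is birational with a bidegree count --- both the image curve and $R_{12}$ have bidegree $(4,4)$ --- which rules out $R_{12}$ being a proper power of an irreducible polynomial; and use $|B_i|=4$ to settle the parity at infinity in the equivalence between ``$f_1f_2$ is a square'' and $B_1=B_2$ (when a branch point sits at $\infty$ the corresponding $f_i$ drops to degree three, and the even total degree of $f_1f_2$ must be confirmed). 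In the ``if'' direction you need not even worry whether the two components of $X$ have distinct images: if they coincided, $R_{12}$ would be the square of the common image polynomial, which is still reducible. With these verifications written out, your argument is a valid proof filling in exactly what the paper outsources to \cite{izmestiev2016classification}.
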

\begin{rem}
We  note that Izmestiev understands the property of irreducibility of a coupling of two spherical quadrilaterals as  the irreducibility of an algebraic set that describes the  corresponding complexified configuration spaces.  One should not confuse the properties of irreducibility of the resultant with the irreducibility of the coupling. 
\end{rem} 
As was mentioned above, Izmestiev showed that there is only one possible candidate 
for a flexible Kokotsakis polyhedron with  irreducible resultant $R_{12}$. 
In particular, all scissors-like linkages of such a polyhedron must be of a special type, which the  author called  anti-involutive coupling.
 
\subsection{Anti-involutive coupling and its properties}
A coupling of two orthodiagonal quadrilaterals is called {\it anti-involutive}
if their involution factors at the common vertex are opposite,
e.g. $\lambda_1 = - \lambda_2$ for two coupled spherical quadrilaterals associated with the edge $A_1 A_2$.

\begin{lem}
    \label{lem:linkage_prop}
    Let spherical orthodiagonal quadrilaterals $Q_1$ and $Q_2$ form
    a scissor-like linkage as in \Href{Figure}{fig:XiAngles}
    with the involution factors $\lambda_1$ and $\lambda_2$ in the common vertex.
    Then
    \begin{align*}
        \lambda_1 =  \lambda_2 &\Leftrightarrow \xi_1 = \xi_2, \xi'_1 = \xi'_2 \text{ during flexion}\\
        \lambda_1 = -\lambda_2 &\Leftrightarrow \xi_1 = \xi'_2, \xi'_1 = \xi_2 \text{ during flexion}
    \end{align*}
\end{lem}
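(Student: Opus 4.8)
The plan is to reduce the whole statement to one algebraic identity for a single orthodiagonal quadrilateral and then read off the coupling. First I would record the geometric meaning of the involution factor. Writing the configuration equation \eqref{eqn:OrthoPolynomial} of a single $Q$ as a quadratic in $z$ with $w$ held fixed, $(w^2+\mu)z^2-\nu w z+\lambda(w^2+\mu)=0$, Vieta's formulas show that the two $z$-preimages of a fixed $w$ multiply to $\lambda$. Hence the deck transformation $j\colon(z,w)\mapsto(z',w)$ — which, as in \Href{Figure}{fig:Involutions}, is the folding of $Q$ that keeps $\psi$ fixed and moves $\phi$ — acts by $z\mapsto\lambda/z$. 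This single formula is the only place \eqref{eqn:OrthoPolynomial} enters, and it is what ties the sign of $\lambda$ to the geometry of the fold.

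Next I would use the figure to name the protagonists. The pair $\xi_1,\xi_1'$ (resp. $\xi_2,\xi_2'$) of \Href{Figure}{fig:XiAngles} is precisely the pair of angles of $Q_1$ (resp. $Q_2$) interchanged by the fold $j_1$ (resp. $j_2$); thus passing from $\xi_k$ to $\xi_k'$ is exactly applying $j_k$, i.e. replacing $z$ by $\lambda_k/z$. The two quadrilaterals of the linkage are glued along the common face at the edge $A_1A_2$ and therefore share the dihedral angle $\phi$, that is, the same variable $z$ (\Href{Figure}{fig:Scissors}); gluing forces the angle at the common edge read from $Q_1$ to coincide with the one read from $Q_2$, which is what makes $\xi_1$ and $\xi_2$ comparable throughout the motion. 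So at each instant of the flexion one and the same $z$ feeds both folds.

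The coupling argument then only has to compare $j_1$ and $j_2$ on this shared $z$. If $\lambda_1=\lambda_2$ then $j_1(z)=\lambda_1/z=\lambda_2/z=j_2(z)$ identically along the branch, so the two folds agree and the matching is the direct one, $\xi_1=\xi_2$ and $\xi_1'=\xi_2'$. If instead $\lambda_1=-\lambda_2$ then $j_2(z)=\lambda_2/z=-\lambda_1/z=-j_1(z)$, i.e. the fold of $Q_2$ is the fold of $Q_1$ composed with the base-plane reflection $z\mapsto-z$ ($\phi\mapsto-\phi$) of the Symmetries subsection; since that reflection is exactly what turns a direct gluing into the mirror one, the matching becomes crossed, $\xi_1=\xi_2'$ and $\xi_1'=\xi_2$. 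Both implications reverse because a nonconstant branch recovers $\lambda_k=zz'$ uniquely, so the observed angle matching pins down the sign relation between $\lambda_1$ and $\lambda_2$; this yields the two asserted equivalences.

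The hard part will be making the dictionary of the second paragraph rigorous: one must verify from \Href{Figure}{fig:XiAngles} that the primed/unprimed angles really are the fold-pair and, above all, that the base-plane reflection $z\mapsto-z$ acts on the glued picture by interchanging $\xi$ and $\xi'$ rather than producing a different configuration. I expect this to come down to a single sign check — that $\phi\mapsto-\phi$ reflects $Q$ across the great circle carrying the common edge and hence swaps the two fold-angles — after which the equivalences hold along the entire branch by analytic continuation, an equality of angles on a one-parameter arc of the flexion propagating to the whole nonconstant branch.
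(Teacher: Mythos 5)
Your first step---deriving the deck transformation $z\mapsto\lambda/z$ from \eqref{eqn:OrthoPolynomial} by Vieta's formulas---is exactly the paper's starting point. But the dictionary you build on it contains a genuine error that the rest of the argument inherits. The fold along the diagonal through the common vertex does \emph{not} interchange $\xi_k$ and $\xi_k'$: it keeps the triangle containing $\delta_k$ fixed and reflects the other one, so it sends the angle at the common vertex from $\xi_k+\xi_k'$ to $\xi_k-\xi_k'$. Indeed, any operation that merely swapped $\xi_k$ and $\xi_k'$ would preserve $\phi=\xi_k+\xi_k'$ and hence fix $z$, contradicting $z\mapsto\lambda_k/z$ on a nonconstant branch. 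The same objection kills the ``single sign check'' you defer at the end: the base-plane reflection $z\mapsto-z$ negates $\phi$, so it cannot act by interchanging $\xi$ and $\xi'$ (which fixes $\phi$). The operation that does swap them is the relabelling $\alpha_k\leftrightarrow\delta_k$, i.e. flipping one half of the scissors linkage, which fixes $z$ and flips the sign of $\lambda_k$ by \eqref{eqn:def_la}---and that is precisely how the paper reduces the anti-involutive case to the involutive one.

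What is missing is the short computation the paper puts at the centre, in which the gluing constraint is actually used. Since the quadrilaterals share the angle at the common vertex, $\xi_1+\xi_1'=\xi_2+\xi_2'$ at every instant of the flexion; compatibility of the folds, $\lambda_1/z=\lambda_2/z$, says the folded angles agree, $\xi_1-\xi_1'=\xi_2-\xi_2'$; adding and subtracting these two identities yields $\xi_1=\xi_2$, $\xi_1'=\xi_2'$. In the case $\lambda_1=-\lambda_2$ your algebra $j_2=-j_1$ is correct and gives $\xi_2-\xi_2'=-(\xi_1-\xi_1')$, whence the crossed matching follows by the same two-line linear algebra (equivalently, apply the $\alpha\leftrightarrow\delta$ flip and invoke the first case, as the paper does). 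Your converse direction---recovering $\lambda_k=zz'$ from a nonconstant branch---is fine. So the skeleton is repairable, but as written the two geometric claims carrying your proof (``the fold interchanges $\xi,\xi'$'' and ``$z\mapsto-z$ interchanges $\xi,\xi'$'') are false, and the load-bearing sum identity $\xi_1+\xi_1'=\xi_2+\xi_2'$ never enters your argument.
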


\begin{figure}[ht]
    \begin{center}
        \begin{picture}(250,140)
        \put(0,0){\includegraphics[scale=0.7]{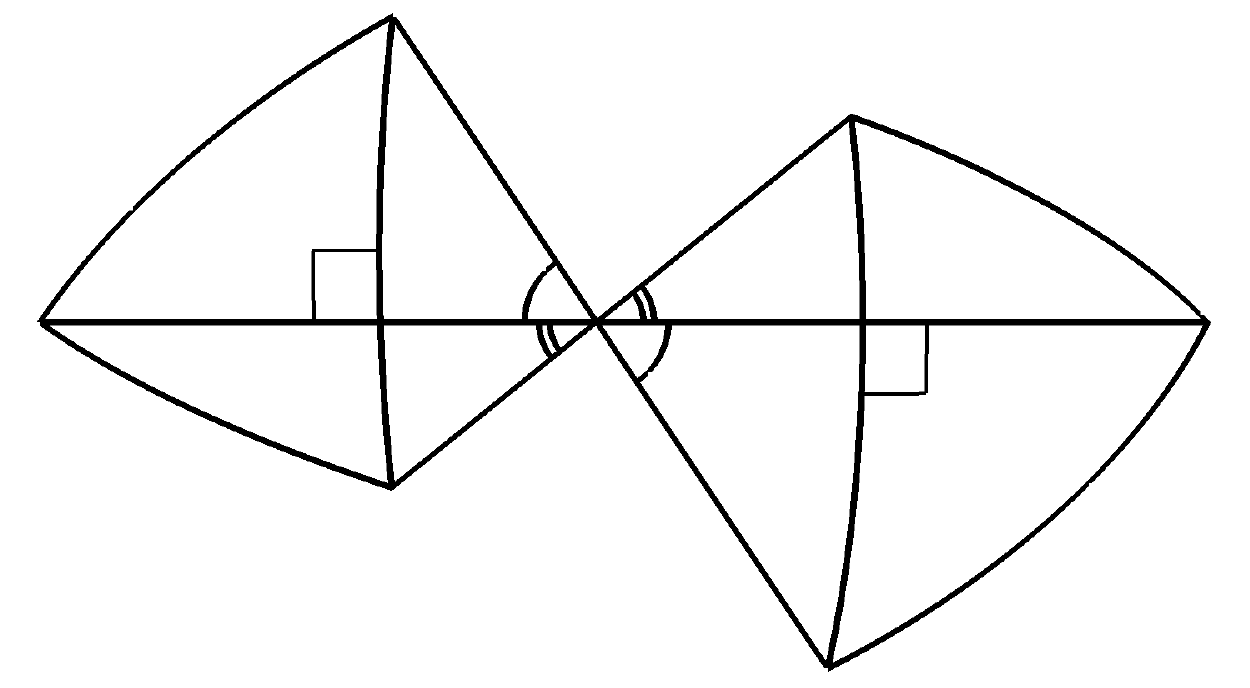}}
        \put(95,80){$\xi_2$}
        \put(137,79){$\xi_1$}
        \put(92,63){$\xi'_2$}
        \put(135,61){$\xi'_1$}
        
        \put(100,110){$\delta_2$}
        \put(137,99){$\delta_1$}
        \put(92,43){$\alpha_2$}
        \put(135,32){$\alpha_1$}
        \end{picture}
    \end{center}
    \caption{
        \label{fig:XiAngles}
        To the proof of \Href{Lemma}{lem:linkage_prop}
    }
\end{figure}

\begin{proof}
    Equation \eqref{eqn:OrthoPolynomial} implies that the involutions
    of an orthodiagonal quadrilateral are given by
    \[
        i(z,w) = (\lambda z^{-1}, w), \quad j(z,w) = (z, \mu w^{-1}).
    \]
    Therefore, the condition $\lambda_1 = \lambda_2$ is equivalent to compatibility
    of the involutions of the coupled quadrilaterals at their common vertex:
    \[
        i_1(z, w_1) = (z', w_1), \quad  i_2(z, w_2) = (z'', w_2) \Rightarrow z' = z''.
    \]
    On the other hand, the involution $i_1$ changes the angle of the first quadrilateral
    at the common vertex from $\xi_1 + \xi'_1$ to $\xi_1 - \xi'_1$,
    and the angle of the second quadrilateral from $\xi_2 + \xi'_2$ to $\xi_2 - \xi'_2$.
    The compatibility means that the angles at the common vertex
    remain equal after the involution: $\xi_1 - \xi'_1 = \xi_2 - \xi'_2$.
    Because  $\xi_1 + \xi'_1 = \xi_2 + \xi'_2$,
    this is equivalent to $\xi_1 = \xi_2$, $\xi'_1 = \xi'_2$.
    
    Observe that exchanging $\alpha$ and $\delta$ changes the sign of the involution factor.
    Thus, if we rotate the left half of the scissors linkage,
    then the condition $\lambda_1 = - \lambda_2$ transforms to $\lambda_1 = \lambda_2$.
    On the other hand, this exchanges the angles $\xi_2$ and $\xi'_2$.
    This proves the second part of the lemma.
\end{proof}

\begin{lem}
    \label{lem:SimultVanish}
    In an antiinvolutive ($\lambda_1 = -\lambda_2$) linkage of two orthodiagonal quadrilaterals,
    if one of the angles $\chi_1$ and $\chi'_2$
    in \Href{Figure}{fig:SimultVanish} becomes $0$ or $\pi$,
    then the other one does as well.
\end{lem}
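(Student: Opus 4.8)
The plan is to reduce the simultaneous–vanishing statement to the coincidence of two branch sets on the shared $z$–line, and then to read off this coincidence from the two defining relations of the anti–involutive type. Writing $z=\tan\frac\phi2$ for the common dihedral angle at the edge shared by $Q_1$ and $Q_2$, \Href{Lemma}{lem:InvolutionBranchedPoint} identifies $\chi_1=0\ (\mod \pi)$ with the condition that $z$ lies in the branch set $B_1$ of the coordinate projection $P_z$ of the configuration curve of $Q_1$, and likewise $\chi'_2=0\ (\mod \pi)$ with $z$ lying in the branch set $B_2$ attached to $Q_2$. Thus the assertion is exactly that $B_1=B_2$ as subsets of $\CP^1_z$, and the content of ``simultaneous'' vanishing is that one and the same value $z_*$ triggers the degeneration of both quadrilaterals.

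First I would compute the two branch sets. A branch point of $P_z$ is a value of $z$ at which the two $w$–roots of the orthodiagonal equation \eqref{eqn:OrthoPolynomial} collide, i.e. a zero of its discriminant in $w$. For $Q_1$ this discriminant vanishes when $4\mu_1(z^2+\lambda_1)^2=\nu_1^2z^2$; dividing by $4\mu_1$ and inserting $\zeta_1=\frac{|\nu_1|}{4\sqrt{\lambda_1\mu_1}}$ from \eqref{eqn:sub_for_reduced_system} factors this as $z^2\mp2\zeta_1\sqrt{\lambda_1}\,z+\lambda_1=0$, so that $B_1=\{\pm\sqrt{\lambda_1}\,(\zeta_1\pm\sqrt{\zeta_1^2-1})\}$. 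For $Q_2$ the anti–involutive relation $\lambda_2=-\lambda_1$ only flips the sign of the constant term: the analogous discriminant factors as $z^2\mp2\zeta_2\sqrt{\lambda_1}\,z-\lambda_1=0$, whose roots are $\sqrt{\lambda_1}\,(\pm\zeta_2\pm\sqrt{\zeta_2^2+1})$.

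It is exactly here that the third relation of the anti–involutive type is used. The normalization \eqref{eqn:resultants_sys_zeta} gives $\zeta_1^2-\zeta_2^2=1$, hence $\sqrt{\zeta_2^2+1}=\zeta_1$ and $\zeta_2=\sqrt{\zeta_1^2-1}$; substituting both into the description of $B_2$ turns it verbatim into $B_1$. To make this geometric rather than purely computational I would also record that both $B_1$ and $B_2$ are orbits of the Klein four–group $G$ generated on $\CP^1_z$ by $z\mapsto-z$ and by the deck involution $i_1\colon z\mapsto\lambda_1/z$ of \Href{Lemma}{lem:linkage_prop}; since $i_2\colon z\mapsto-\lambda_1/z$ lies in $G$, the set $B_2$ is a $G$–orbit as well. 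Writing $\cosh\omega=\zeta_1$, the orbit $B_1$ has representative $\sqrt{\lambda_1}\,e^{\omega}$ while $B_2$ has representative $\sqrt{\lambda_1}\,(\zeta_1+\zeta_2)$; two $G$–orbits either coincide or are disjoint, and $\zeta_1^2-\zeta_2^2=1$ is precisely the equality $\zeta_1+\zeta_2=e^{\omega}$ of a single representative, which forces $B_1=B_2$.

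Finally I would deduce the lemma: if one of $\chi_1,\chi'_2$ equals $0$ or $\pi$ at a flexion value $z_*$, then $z_*\in B_1=B_2$, so $z_*$ is simultaneously a branch point for both quadrilaterals and the other angle also equals $0$ or $\pi$. I expect the main obstacle to be the sheet bookkeeping that pins the primed angle $\chi'_2$ to the correct branch of $P_z$ for $Q_2$, so that one verifies that the \emph{same} value $z_*$ produces both degenerations and not merely that the unordered sets agree; the substantive input, by contrast, is the recognition that the anti–involutive sign flip $\lambda_2=-\lambda_1$ only aligns the two $G$–orbits, whereas the magnitude matching that collapses them onto a single orbit is supplied entirely by the resultant relation $\zeta_1^2-\zeta_2^2=1$.
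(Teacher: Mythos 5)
Your proposal has two genuine gaps, and they compound each other. First, the opening reduction misreads the statement. \Href{Lemma}{lem:InvolutionBranchedPoint} equates ``$z$ lies in the branch set of $P_z$'' with the vanishing of the angle \emph{opposite} the $\phi$-vertex; in the linkage, the only angles of that kind are the two angles opposite the common vertex, and for $Q_2$ that angle is the \emph{unprimed} $\chi_2$ of \Href{Figure}{fig:SimultVanish}. The angle $\chi'_2$ sits at a different vertex of $Q_2$: its vanishing is equivalent --- via degeneration of the spherical triangle cut off by the diagonal of $Q_2$ through the common vertex, whose two fixed sides are $\alpha_2,\beta_2$ --- to $\xi'_2 = 0$ or $\pi$, not to $z$ lying in a branch set over the shared line. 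The entire content of the lemma is that $\chi_1$ pairs with the \emph{primed} angle of $Q_2$ rather than with $\chi_2$, and this pairing is exactly what the anti-involutive condition buys: by \Href{Lemma}{lem:linkage_prop}, $\lambda_1=-\lambda_2$ forces $\xi_1=\xi'_2$ and $\xi'_1=\xi_2$ throughout the flexion (for $\lambda_1=\lambda_2$ one would get the other pairing). An identity of unordered branch sets $B_1=B_2$ on $\CP^1_z$ cannot distinguish $\chi_2$ from $\chi'_2$; this is precisely the ``sheet bookkeeping'' you flag at the end and leave open, and it is not bookkeeping but the substance of the statement.

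Second, the relation $\zeta_1^2-\zeta_2^2=1$ is not available to you. The lemma is stated for an anti-involutive \emph{linkage}, whose only hypothesis is $\lambda_1=-\lambda_2$; equation \eqref{eqn:resultants_sys_zeta} (equivalently the third equation of \eqref{eqn:SystemProporResultants}) is a normalization imposed on a whole OAI \emph{polyhedron}, not ``the third relation of the anti-involutive type'' of a coupling. Worse, your own (correct) computation shows $B_1=B_2$ holds \emph{if and only if} $\zeta_1^2-\zeta_2^2=1$ --- this is \Href{Lemma}{lem:resultant_complex} transported to the $z$-line --- so under your reading the lemma would be false for a generic anti-involutive linkage, which should have signalled that the reduction to $B_1=B_2$ is the wrong one. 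The paper's proof is instead a two-line synthetic argument requiring no normalization: $\chi_1=0$ or $\pi$ iff the triangle containing both $\chi_1$ and $\xi_1$ degenerates, iff $\xi_1=0$ or $\pi$; by \Href{Lemma}{lem:linkage_prop} this holds iff $\xi'_2=0$ or $\pi$, iff $\chi'_2=0$ or $\pi$. Your discriminant computation is not wasted --- it reproves reducibility of the resultant under the full OAI hypotheses, in the spirit of the remark following \Href{Theorem}{thm:Stachel_conj} --- but it proves a different statement than this lemma.
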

\begin{proof}
Indeed, $\chi_1 = 0  \,(\mod \pi)$ if and only if $\xi_1 = 0\,(\mod \pi).$
\end{proof}

\begin{figure}[ht]
\begin{center}
\begin{picture}(150,140)
\put(-55,0){\includegraphics[scale=0.7]{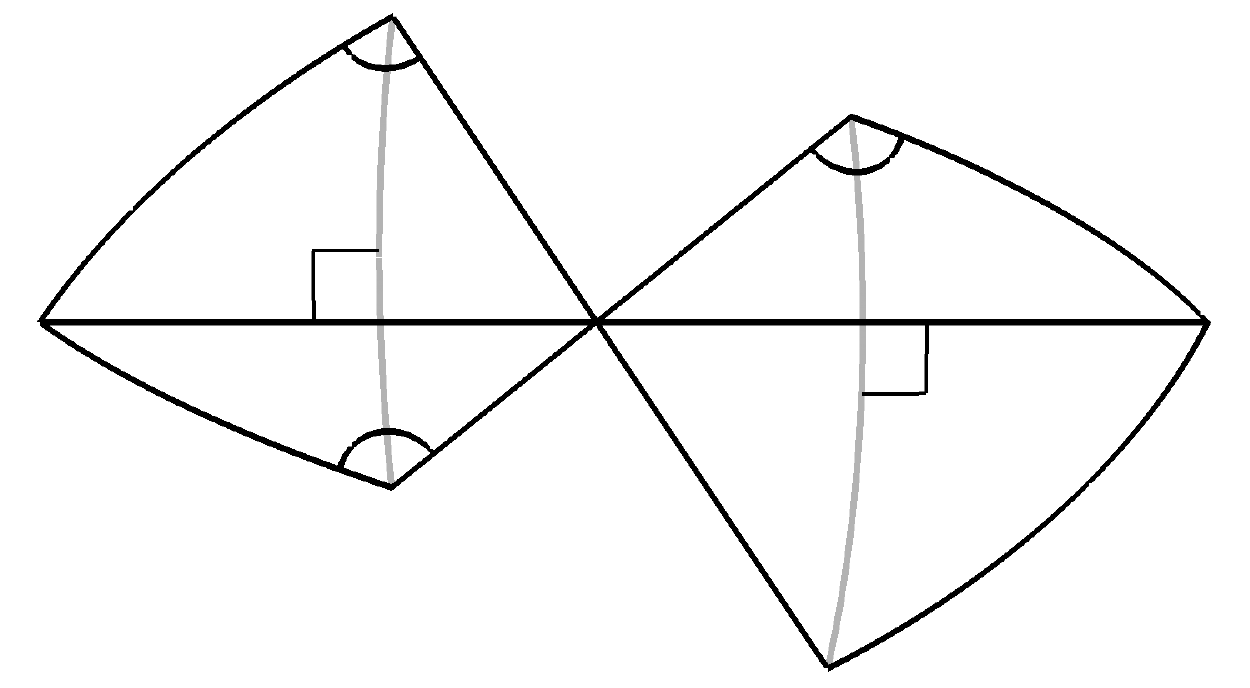}}
\put(10,119){$\chi_2$}
\put(121,99){$\chi_1$}
\put(7,56){$\chi'_2$}
\end{picture}
\end{center}
\caption{To the proof of  \Href{Lemma}{lem:SimultVanish}.}
\label{fig:SimultVanish}
\end{figure}

\subsection{Geometric properties of orthodiagonal anti-involutive type polyhedra}

From the definition of an OAI polyhedron, we see that 
all four linkages of it are  anti-involutive. Applying \Href{Lemma}{lem:SimultVanish}, we observe the following {\it flattening effect}.
\begin{cor}\label{lem:flatenning}
In a flexible  OAI Kokotsakis polyhedron  the dihedral angles at the three bold edges in \Href{Figure}{fig:PartialFlat}
vanish or become $\pi$ at the same time.
\end{cor}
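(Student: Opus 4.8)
The plan is to obtain the flattening effect as a short chaining argument built on \Href{Lemma}{lem:SimultVanish}. The first step is to set up the dictionary between dihedral angles of the polyhedron and the angles $\chi$ of the spherical quadrilaterals. The interior angles of $Q_i$ are exactly the dihedral angles along the edges issuing from the base vertex $A_i$, so each bold edge carries a $\chi$- (or $\chi'$-) angle of an adjacent quadrilateral; by \Href{Lemma}{lem:InvolutionBranchedPoint} the condition $\chi = 0\,(\mathrm{mod}\,\pi)$ is precisely the degeneration of that quadrilateral to a spherical triangle, i.e. the coplanarity of the two faces meeting along the corresponding edge. Thus ``the dihedral angle at a bold edge vanishes or becomes $\pi$'' is the same as ``the associated $\chi$-angle reaches $0$ or $\pi$'', and the corollary becomes a statement about the simultaneous degeneration of three such angles.

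Next I would invoke the hypothesis: for an OAI polyhedron all four scissors-like linkages are anti-involutive, since relations \eqref{eqn:LambdaEq} hold at every common vertex. Consequently \Href{Lemma}{lem:SimultVanish} applies verbatim to each coupling and yields, for each base edge, an equivalence of the form ``$\chi$ of one adjacent quadrilateral reaches $0$ or $\pi$ if and only if $\chi'$ of the other does''. The heart of the proof is then to compose two such equivalences across a shared middle quadrilateral: if the three bold edges of \Href{Figure}{fig:PartialFlat} are labelled so that the first and second are the $\chi$- and $\chi'$-angles paired by one coupling, while the second and third are the angles paired by the adjacent coupling, then chaining the two equivalences forces all three edges to flatten at the same instant of the flexion.

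The step I expect to be the main obstacle is verifying that the two consecutive couplings genuinely share one and the same bold edge. Each coupling pairs one of the two outer angles of the middle quadrilateral (say $\chi'_2$) with an angle of one neighbour, while the adjacent coupling pairs the other outer angle ($\chi_2$) of that same quadrilateral with a different neighbour; for the chain to close into a single synchronized triple one must identify $\chi_2$ and $\chi'_2$ as the dihedral angle at the same bold edge, rather than at its partner across the diagonal. Resolving this requires fixing a consistent orientation of the diagonals of the $Q_i$ around the base quadrilateral, as in \Href{Figure}{fig:Quad_Notation_with_chi} and \Href{Figure}{fig:SimultVanish}. Once the labels are matched this way, the two equivalences compose and the corollary follows immediately, with no further computation.
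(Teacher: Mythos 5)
Your proposal is correct and takes essentially the same route as the paper, which derives the corollary in one stroke: all four couplings of an OAI polyhedron are anti-involutive by \eqref{eqn:LambdaEq}, and \Href{Lemma}{lem:SimultVanish} applied to the two couplings sharing the middle quadrilateral chains the two equivalences exactly as you describe. The labeling subtlety you flag does resolve as you expect, since in both couplings the relevant angle of the middle quadrilateral sits at the same lateral-edge vertex (the one opposite a base-edge vertex of $Q_i$), which is what \Href{Figure}{fig:SimultVanish} encodes.
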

\begin{figure}[ht]
\begin{center}
\includegraphics{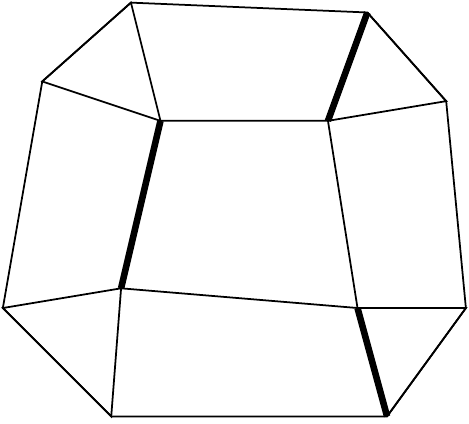}
\end{center}
\caption{A partial flattening of a flexible OAI  polyhedron.}
\label{fig:PartialFlat}
\end{figure}

Using \Href{Theorem}{thm:real_conf_space}, it is quite easy to understand what kind of flattening has a flexible OAI Kokotsakis polyhedron.
\begin{lem}\label{lem:flat_orthodiag}
Two dihedral angles of a flexible OAI Kokotsakis polyhedron   never become  $0$ or $\pi$, the two other dihedral angles become $0$ and $\pi$ on each brunch of the solution given by \Href{Theorem}{thm:real_conf_space} 
\end{lem}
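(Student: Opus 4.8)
The plan is to translate the flattening of a dihedral angle into a statement about the four variables and then read it off from the explicit parameterization \eqref{eqn:sol_reduced_system}. Since $z=\tan\frac\phi2$ and likewise for $\psi_1,\theta,\psi_2$, a dihedral angle equals $0$ exactly when its variable vanishes and equals $\pi$ exactly when its variable is $\infty$ in $\RP^1$. By the scaling \eqref{eqn:sub_for_reduced_system}, in which all of $\sqrt{\lambda_1},\sqrt{\mu_1},\sqrt{-\lambda_3},\sqrt{-\mu_3}$ are nonzero and finite, this is equivalent to the reduced variables $\xin_1,\etan_1,\xin_3,\etan_3$ hitting $0$ or $\infty$, so the whole statement becomes a question about \eqref{eqn:reduced_system} and its solution curve.

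First I would dispose of $\phi$ and $\psi_1$. In \Href{Subsection}{sec:eq_positive_signs} the first equation of \eqref{eqn:reduced_system} was reduced, via $X=\log(\xin_1/\etan_1)$ and $Y=\log(\xin_1\etan_1)$, to $\ch X+\ch Y=2\zeta_1$, whose real solution set is a bounded curve in the $(X,Y)$ plane. Hence $X,Y$ stay finite, so $\xin_1=e^{(X+Y)/2}$ and $\etan_1=e^{(Y-X)/2}$ are bounded away from both $0$ and $\infty$. Therefore $z$ and $w_1$ never vanish and never become infinite, i.e. $\phi$ and $\psi_1$ never reach $0$ or $\pi$; these are the two dihedral angles of the first assertion.

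Next come $\theta$ and $\psi_2$. I would read the fourth equation of \eqref{eqn:reduced_system} as a quadratic in $\xin_3$, namely $(\etan_1^2-1)\xin_3^2-4\zeta_4\etan_1\,\xin_3+(\etan_1^2-1)=0$, whose two roots are exactly the two branches for $\xin_3$ and whose product equals $1$. A root can reach $0$ or $\infty$ only when the leading coefficient $\etan_1^2-1$ vanishes, i.e. when $\etan_1=1$; since $\etan_1=F(t)F(t-\frac\pi2)$, this is $X=Y$, which holds precisely at $t=\frac\pi4$ and $t=\frac{5\pi}4$. There the quadratic degenerates: one root tends to $\infty$, and because the product of the roots is $1$, the other tends to $0$. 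Evaluating \eqref{eqn:sol_reduced_system} at these two points shows that the $+$ branch gives $\xin_3=\infty$ at $t=\frac\pi4$ and $\xin_3=0$ at $t=\frac{5\pi}4$, while the $-$ branch does the opposite, so on each branch $u$ (hence $\theta$) attains both $0$ and $\infty$ (hence both $0$ and $\pi$). The identical argument applied to the second equation of \eqref{eqn:reduced_system}, viewed as a quadratic in $\etan_3$ with product of roots $1$ and leading coefficient $\xin_1^2-1$, handles $w_2$: the degeneration $\xin_1=1$ (i.e. $X=-Y$, using $\xin_1=F(t)F(t+\frac\pi2)$) occurs at $t=\frac{3\pi}4,\frac{7\pi}4$, and there $\etan_3$, hence $\psi_2$, reaches $0$ and $\pi$ on each branch.

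The one delicate point, which I expect to be the main obstacle, is the degenerate $0/0$ form that \eqref{eqn:sol_reduced_system} takes at the point of each branch giving the value $0$ rather than $\infty$ (for instance the $-$ branch of $u$ at $t=\frac\pi4$, where both $\sqrt{\zeta_1+1}-(G(t)+G(t+\frac\pi2))$ and $V(t)-V(t+\frac\pi2)$ vanish). I would resolve this cleanly through the product-of-roots identity above: since the two roots of the relevant quadratic always multiply to $1$ and one of them blows up at the degeneration, the other branch is forced to the reciprocal value $0$. As a cross-check one may instead expand $F,G,V$ to first order around $t=\frac\pi4$, where the numerator is $O(\epsilon^2)$ while the denominator is $O(\epsilon)$, confirming $\xin_3\to0$. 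This recovers from the analytic side the simultaneous flattening already observed geometrically in \Href{Corollary}{lem:flatenning}.
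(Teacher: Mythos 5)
Your proof is correct and follows essentially the same route as the paper: reduce to system \eqref{eqn:reduced_system}, use the boundedness of the curve $\ch X+\ch Y=2\zeta_1$ to show $\xin_1,\etan_1$ (hence $\phi,\psi_1$) never degenerate, and use the quadratic structure of the second and fourth equations to force $\etan_3,\xin_3$ to hit $0$ and $\infty$ exactly where $\xin_1=1$, respectively $\etan_1=1$. Your product-of-roots-equals-$1$ argument (with the explicit evaluation at $t=\frac\pi4,\frac{3\pi}4,\frac{5\pi}4,\frac{7\pi}4$) is a welcome, more explicit justification of the step the paper dismisses with ``it is easy to see that $\etan_3$ is different at these points.''
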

\begin{proof}
We use the notation of \Href{Section}{sec:conf_space}.
If one of the angles $\phi$, $\psi_1$, $\theta$,  $\psi_2$ 
 is $0$ or $\pi$ then the corresponding  variable   is $0$ or $\infty.$ 
Since substitution \eqref{eqn:sub_for_reduced_system} just scales the variables, the same happens to the corresponding variable of system \eqref{eqn:reduced_system}. Let us consider it for simplicity. 
By \Href{Subsection}{sec:eq_positive_signs}, the variables of the first equation of \eqref{eqn:reduced_system} (with positive signature) never become $0$ or $\infty.$
Also, each branch  of the solution of the whole system contains all solutions of the first equation up to the symmetry $(\xin_1, \etan_1) \to  - (\xin_1, \etan_1).$ 
Moreover, since $\zeta_1 > 1,$ then $\xin_1 = 1$  at two different points of the solution of the first equation. 
Therefore, by the second equation,  $\etan_3$ becomes $0$ or $\infty$ at these points  on each branch of the solution of the whole system. It is easy to see that $\etan_3$ is different at this points. So it must be $0$ and $\infty$ at these points, and it can have these values only at these points. A similar argument works for $\etan_1$ and $\xin_3.$
\end{proof}
Summarizing all results, we prove Stachel's conjecture.
\begin{thm}\label{thm:Stachel_conj}
Stachel's conjecture is true. The resultant $R_{12}$ of a real flexible OAI Kokotsakis polyhedron   is reducible.  
\end{thm}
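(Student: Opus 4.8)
The plan is to prove that the resultant $R_{12}$ is reducible by combining the purely geometric characterization of reducibility from \Href{Lemma}{lem:resultant} with the flattening effect established for OAI polyhedra. By \Href{Lemma}{lem:resultant}, the resultant $R_{12}(w_1, w_2)$ of the scissors-like linkage $(Q_1, Q_2)$ is reducible if and only if the branch sets of the two projections $P_{w_1}$ and $P_{w_2}$ coincide. Thus the entire task reduces to verifying that these two branch sets are identical, and I would phrase the whole argument around establishing precisely this coincidence.

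First I would recall, via \Href{Lemma}{lem:InvolutionBranchedPoint}, that a value of $w_1$ lies in the branch set of $P_{w_1}$ exactly when the corresponding angle $\chi$ of $Q_1$ equals $0 \pmod \pi$, and similarly a value of $w_2$ lies in the branch set of $P_{w_2}$ exactly when the corresponding angle $\chi$ of $Q_2$ is $0 \pmod \pi$. In the notation of \Href{Figure}{fig:SimultVanish}, these are the angles $\chi_1$ and $\chi'_2$ attached to the coupled pair $(Q_1, Q_2)$. Since an OAI polyhedron has all four of its couplings anti-involutive (this is condition \eqref{eqn:LambdaEq} together with the definition of the OAI type), the edge $A_1 A_2$ carries an anti-involutive linkage with $\lambda_1 = -\lambda_2$, so \Href{Lemma}{lem:SimultVanish} applies directly to it.

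The crux is then a short logical chain: \Href{Lemma}{lem:SimultVanish} states that in an anti-involutive linkage the angle $\chi_1$ becomes $0$ or $\pi$ if and only if $\chi'_2$ does as well. Translating through \Href{Lemma}{lem:InvolutionBranchedPoint}, the points of the configuration curve where $w_1$ hits its branch set are exactly the points where $w_2$ hits its branch set. Because the two projections share the same underlying elliptic configuration curve (the complexified configuration space of the coupling), the branch sets of $P_{w_1}$ and $P_{w_2}$ therefore coincide as point sets. Applying \Href{Lemma}{lem:resultant} in the reverse direction yields that $R_{12}$ is reducible, which is exactly Stachel's conjecture in this last open case.

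The main obstacle I anticipate is not any computation but the careful bookkeeping of how the simultaneous-vanishing statement of \Href{Lemma}{lem:SimultVanish}, phrased in terms of the geometric angles $\chi_1$ and $\chi'_2$, lines up with the branch-set statement of \Href{Lemma}{lem:resultant}, phrased in terms of the deck transformations of the coordinate projections. In particular I would need to be sure that ``branch set of $P_{w_1}$'' and ``branch set of $P_{w_2}$'' are being compared as subsets of a common curve (or as sets of critical values correctly matched up through the coupling), so that their coincidence is genuinely equivalent to reducibility and not merely to an equality of cardinalities. Once this dictionary between the folding-involution picture and the branched-cover picture is pinned down, the proof is essentially a one-line application of the two geometric lemmas, which is why the author advertises it as a \emph{simple geometric proof}.
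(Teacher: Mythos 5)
You have the paper's skeleton exactly right (\Href{Lemma}{lem:resultant} plus \Href{Lemma}{lem:InvolutionBranchedPoint} plus anti-involutive simultaneity), but there is a genuine gap at the crux. \Href{Lemma}{lem:SimultVanish} is purely conditional: \emph{if} $\chi_1$ becomes $0$ or $\pi$ during the flexion, \emph{then} so does $\chi'_2$. By itself it says nothing about whether these degenerations ever occur in the real motion, nor how many distinct branch values they realize. Each branch set consists of exactly four points, so to conclude the two sets coincide you must exhibit four different points of the flexion, with four \emph{distinct} values of the shared coordinate, at which the simultaneous degeneration actually happens; if, say, the real flexion realized only two of the four branch points of $Q_1$'s cover, you would learn only that the two four-point sets share two points, which does not force equality. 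This is precisely why the paper's proof invokes not just the flattening effect (\Href{Corollary}{lem:flatenning}, the global form of \Href{Lemma}{lem:SimultVanish}) but also \Href{Lemma}{lem:flat_orthodiag}, which uses the explicit parameterization of \Href{Theorem}{thm:real_conf_space} to show that two of the dihedral angles genuinely attain $0$ and $\pi$ on each branch of the real solution; in the reduced variables the degenerations occur at $\etan_1=\pm1$, where $\xin_1$ takes the four distinct values $\pm\bigl(\zeta_1\pm\sqrt{\zeta_1^2-1}\bigr)$ --- exactly the branch set computed in \Href{Lemma}{lem:resultant_complex}. Your sentence ``the branch sets therefore coincide as point sets'' skips all of this counting.

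A second, related problem: your bookkeeping compares the wrong covers, and your attempted repair is incorrect. For $R_{12}=\mathop{\mathrm{res}}_z(P_1,P_2)$ the relevant branch sets are those of the projections of $C_1$ and $C_2$ onto the \emph{common} coordinate $z$; ``the branch set of $w_1$'' means the set of $z$-values over which the two-valued function $w_1(z)$ branches (compare \Href{Lemma}{lem:resultant_complex}, where both branch sets are explicitly sets of $\xin_1$-values). These are two four-point subsets of one and the same $\CP^1_z$, so their coincidence is meaningful, and this is exactly what the simultaneity buys: at a configuration of the coupled linkage both quadrilaterals carry the same value of $z$, so when $\chi_1$ and $\chi'_2$ vanish together, one and the same number $z$ is a branch value for both covers. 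Under your reading --- branch sets of $P_{w_1}\colon C_1\to\CP^1_{w_1}$ and $P_{w_2}\colon C_2\to\CP^1_{w_2}$ --- simultaneity yields a branch value of $w_1$ and a branch value of $w_2$ at the same moment, but these are different numbers in different coordinates, and no coincidence of sets follows. Your justification that the two projections ``share the same underlying elliptic configuration curve'' is false: $C_1\subset\CP^1_z\times\CP^1_{w_1}$ and $C_2\subset\CP^1_z\times\CP^1_{w_2}$ are distinct curves, coupled only through the fiber product over $z$. You did flag this worry yourself; resolving it the right way (over $z$) and adding the realization count from \Href{Lemma}{lem:flat_orthodiag} is what turns your outline into the paper's proof.
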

\begin{proof}
We use the notation of the first three sections. 
By \Href{Lemma}{lem:resultant}, it is enough to show that the branch sets for $w_1$ and 
$w_2$ coincide. By \Href{Lemma}{lem:InvolutionBranchedPoint}, it suffices to prove that the dihedral angles at  edges  $A_1 B_1$ and $A_2 B_2$  (see \Href{Figure}{fig:NotPlanAngles}) become 0 or $\pi$ simultaneously at four different points. This follows from \Href{Corollary}{lem:flatenning} and \Href{Lemma}{lem:flat_orthodiag}.
\end{proof}
\begin{rem}
It is easy to show that the resultant $R_{12}$ is reducible by a direct calculation of the branch sets for $w_1$ and $w_2$ from the first two equations of system \eqref{eqn:main_system_with_la_mu_signed}. The first equation of \eqref{eqn:LambdaEq} and the third equation of \eqref{eqn:SystemProporResultants} implies that the branch sets coincide. This observation works even for non-real polyhedra.
\end{rem}
\begin{lem}
\label{lem:resultant_complex}
The resultant $R^r_{12} (\etan_1, \etan_3)$ of the first two equations of system \eqref{eqn:reduced_system} is reducible if and only if $\zeta_1^2 - \zeta_2^2 = 1.$
\end{lem}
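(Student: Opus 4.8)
The plan is to regard $R^r_{12}$ as a palindromic quadratic in $\etan_1^2$, a structure already visible in the displayed formula. Writing $a=\etan_1^2$ and $b=\etan_3^2$, the expression for $\tfrac14 R^r_{12}$ regroups as
\[
\tfrac14 R^r_{12}=C(b)\,a^2+M(b)\,a+C(b),\qquad C(b)=(1+b)^2+4\zeta_2^2 b,\qquad M(b)=2(1-2\zeta_1^2)(1+b)^2+8\zeta_2^2 b,
\]
so the leading and constant coefficients in $a$ coincide. The implication ``$\zeta_1^2-\zeta_2^2=1\Rightarrow R^r_{12}$ reducible'' is the factorization already recorded above, so only the converse needs proof. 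First I would exclude a factor depending on $b$ alone: it would be a common divisor of $C$ and $M$, but both are self-reciprocal quadratics in $b$ (their roots come in reciprocal pairs), so they share a root only if proportional; comparing coefficients, proportionality forces $\zeta_1=0$, which is excluded since $\zeta_1>0$ (the subcase $\zeta_1^2=\tfrac12$, where $M$ is linear with root $0$ and $C(0)=1$, is even clearer). Hence the content of $R^r_{12}$ over $\C[b]$ is trivial, and by Gauss's lemma reducibility is equivalent to the quadratic splitting over $\C(b)$, i.e.\ to the discriminant $\Delta(b)=M(b)^2-4C(b)^2$ being a square in $\C[b]$.

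The core computation is to factor $\Delta$. One gets
\[
M-2C=-4\zeta_1^2(1+b)^2,\qquad M+2C=4(1-\zeta_1^2)(1+b)^2+16\zeta_2^2 b,
\]
hence $\Delta=(M-2C)(M+2C)=-16\zeta_1^2(1+b)^2\,Q(b)$ with $Q(b)=(1-\zeta_1^2)(1+b)^2+4\zeta_2^2 b$. Since $-16\zeta_1^2(1+b)^2$ is already a nonzero constant times a square, $\Delta$ is a square in $\C[b]$ exactly when $Q$ is a constant times a square, that is, when $\operatorname{disc}_b Q=0$. A short calculation gives $\operatorname{disc}_b Q=16\zeta_2^2(\zeta_2^2-\zeta_1^2+1)$, and because $\zeta_2\neq0$ this vanishes if and only if $\zeta_1^2-\zeta_2^2=1$, which is the asserted equivalence.

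The step I expect to be the genuine obstacle is pinning down what ``reducible'' should mean: $R^r_{12}$ is a priori a polynomial in $\etan_1,\etan_3$, whereas the analysis above lives in $a=\etan_1^2$, $b=\etan_3^2$. The two exhibited factors are even in both variables, so the forward direction is insensitive to this distinction; for the converse one must ensure that passing to the squared variables does not conceal a factorization into factors of odd degree in some $\etan_i$. I would therefore phrase and prove the statement at the level of $\etan_1^2,\etan_3^2$, exactly the form in which the factorization is displayed, so that the discriminant argument is decisive. The delicate additional point, if one insists on the variables $\etan_1,\etan_3$ over $\R$, is that the substitution $\etan_i\mapsto\etan_i^2$ can only manufacture a new splitting by peeling off a pair of square-root sheets $\etan_3=\pm\sqrt{b}$ when $b$ becomes a square in the function field of $\{R^r_{12}(a,b)=0\}$; such a splitting is by complex-conjugate factors and is therefore never a factorization over $\R$. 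Verifying this last claim — for instance by exhibiting that $\etan_1^2$ has a place of odd order on that curve, so it stays a non-square — is where I expect the real work to lie, and it is what guarantees that the equivalence $\zeta_1^2-\zeta_2^2=1$ obtained from the discriminant is the full story.
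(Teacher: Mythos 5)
Your computation in the squared variables is correct, but it stops one step short of the statement, and that last step is a genuine gap, not a formality. Writing $a=\etan_1^2$, $b=\etan_3^2$ and $R^r_{12}(\etan_1,\etan_3)=S(a,b)$, your palindromic-quadratic analysis — the content check via the self-reciprocity of $C$ and $M$, the factorization $\Delta=(M-2C)(M+2C)=-16\zeta_1^2(1+b)^2Q(b)$ with $Q(-1)=-4\zeta_2^2\neq0$, and $\operatorname{disc}_b Q=16\zeta_2^2(\zeta_2^2-\zeta_1^2+1)$ — does prove that $S(a,b)$ is reducible in $\C[a,b]$ if and only if $\zeta_1^2-\zeta_2^2=1$. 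But the lemma concerns $R^r_{12}$ as a polynomial in $\etan_1,\etan_3$, and irreducibility does not transfer under the substitution of squares: for instance $a-b^2$ is irreducible in $\C[a,b]$ while $\etan_1^2-\etan_3^4=(\etan_1-\etan_3^2)(\etan_1+\etan_3^2)$. This same example refutes the heuristic in your final paragraph — both factors there are real, so a splitting created by the squaring map is not automatically ``by complex-conjugate factors'' and cannot be dismissed over $\R$; moreover, since the lemma is paired with \Href{Lemma}{lem:resultant} and the remark that the observation works for non-real polyhedra, it is reducibility over $\C$ that is at stake anyway. What remains to be proven — and what you yourself label ``the real work'' — is that for $\zeta_1^2-\zeta_2^2\neq1$ none of $a$, $b$, $ab$ is a square in the function field of $\{S=0\}$. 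This is doable but not free: $a$ can be handled by your odd-place idea, since $\operatorname{disc} C=16\zeta_2^2(1+\zeta_2^2)\neq0$ gives simple zeros of $C(b)$ at which one branch of $a$ has a simple zero; but the analogous coefficient for the other variable, $\tilde C(a)=a^2+2(1-2\zeta_1^2)a+1$, has $\operatorname{disc}\tilde C=16\zeta_1^2(\zeta_1^2-1)$, which degenerates at $\zeta_1=1$ (a value inside the ``irreducible'' regime), so $b$ and $ab$ need separate, case-dependent arguments. Until those are supplied, you have the forward implication and only an ``even-factor'' converse.

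The paper sidesteps the descent problem entirely by arguing through branch data rather than discriminants: $\xin_1$ is a branch point for the first (resp.\ second) equation of \eqref{eqn:reduced_system} exactly when $4\zeta_1\xin_1/(\xin_1^2+1)=\pm2$ (resp.\ $4\zeta_2\xin_1/(\xin_1^2-1)=\pm2$), giving the four-point sets $\bigl\{\pm\zeta_1\pm\sqrt{\zeta_1^2-1}\bigr\}$ and $\bigl\{\pm\zeta_2\pm\sqrt{\zeta_2^2+1}\bigr\}$, which coincide precisely when $\zeta_1^2-\zeta_2^2=1$; \Href{Lemma}{lem:resultant} (resting on Izmestiev's Lemma 5.1 and Lemma 4.10(4)) then states that the resultant of such an elliptic coupling is reducible if and only if these branch sets coincide, which settles both directions at once in the honest variables $\etan_1,\etan_3$. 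So either import that criterion, or complete the Kummer-descent step above; as written, your proof establishes the lemma only for factorizations even in both variables.
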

\begin{proof}
It is easy to see that $\xin_1$ belongs to the branch set of $\etan_1$ if and only if 
\[
\frac{4 \zeta_1 \xin_1}{\xin_1^2 +1} = \pm 2.
\]
From here, the branch set is
\[
\left\{\pm \zeta_1  \pm \sqrt{\zeta_1^2 -1}\right\}.
\]
Similar, the branch set of $w_2$ for the second equation is
\[
\left\{\pm \zeta_2  \pm \sqrt{\zeta_2^2 +1}\right\}.
\]
These sets coincide exactly when $\zeta_1^2 - \zeta_2^2 =1.
$
\end{proof}

\section{Example}
In this section we consider the construction of a flexible OAI Kokotsakis  polyhedron  
with angles of the base quadrilateral:
\[
\delta _1 = 1.36292,\quad
\delta _2 = 1.41009,\quad
\delta _3 = 1.80327,\quad
\delta_4 = 2\pi-\delta_1-\delta_2-\delta_3 = 1.70691.
\]
We selected $\tau = -\arctan 60 =  -1.55413 $.
Following the procedure in \Href{Theorem}{thm:algorithm}, we get 
\begin{align*}
\alpha _1&= 1.34086,& \alpha _2 &= 1.42575,&  \alpha _3&= 1.69859,& \alpha _4&= 1.81798,\\
\gamma _1&= 1.15746,& \gamma _2 &= 2.00166,& \gamma _3&= 1.4875,& \gamma _4&= 1.63656;\\
\beta _1 &= 1.11122,& \beta _2  &= 1.18397,&  \beta _3 &= 1.61684,&\beta _4&= 1.68958.
\end{align*}


In \Href{Figure}{fig:partial flattening} we show the state of the given polyhedron when we observe the flattening effect.
\begin{figure}[H]
    \includegraphics[scale=0.5]{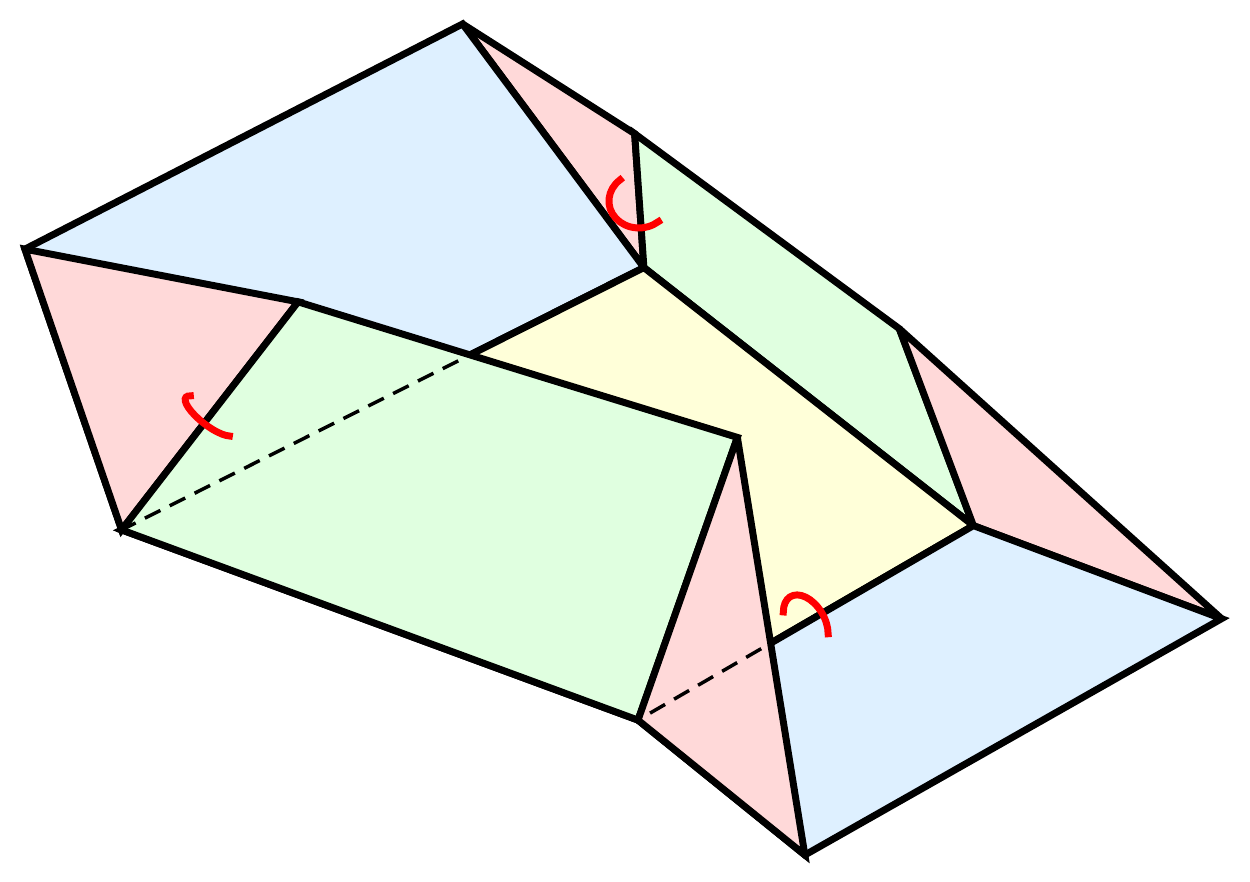}
    \caption{
        \label{fig:partial flattening}
        Partial flattening.
    }
\end{figure}
In this case, we have $k = 0.0083.$ And we observe the effect of a closeness of parameterization mentioned in \Href{Section}{sec:conf_space}. Hardly one can observe the difference between two parameterizations.
\begin{figure}[ht]
\begin{picture}(300,70)
\put(-30,0){\includegraphics[scale=0.25]{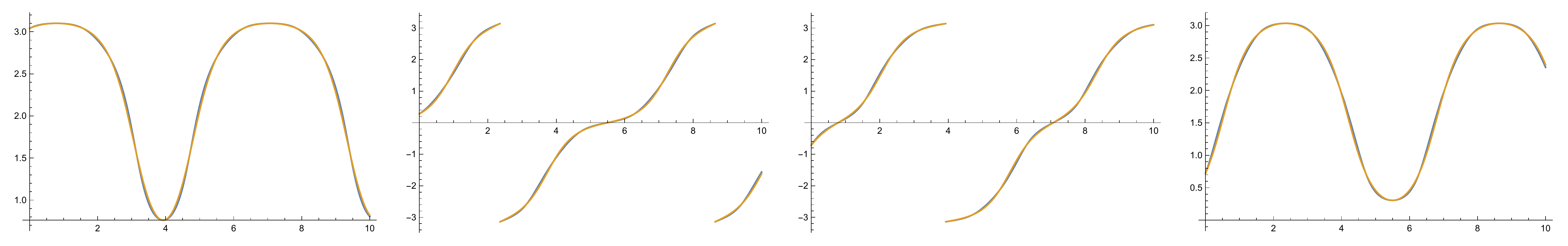}}
\end{picture}
\caption{Angles $(\phi, \psi_2, \theta, \psi_1)$ of $t$ for  a branch  
$s_{\etan} = -1, s_{\xin} = 1$ given by the parameterization of \Href{Theorem}{thm:real_conf_space} and by the parameterization of \Href{Theorem}{thm:real_conf_space_ellipttic} with linear substitution 
$t \to - \frac{K'}{\pi} t - 7.717.$ } 
\label{fig:two_parametrization}
\end{figure}
However, they are different. And we can calculate numerically the difference
\begin{figure}[ht]
\begin{picture}(300,70)
\put(-30,0){\includegraphics[scale=0.25]{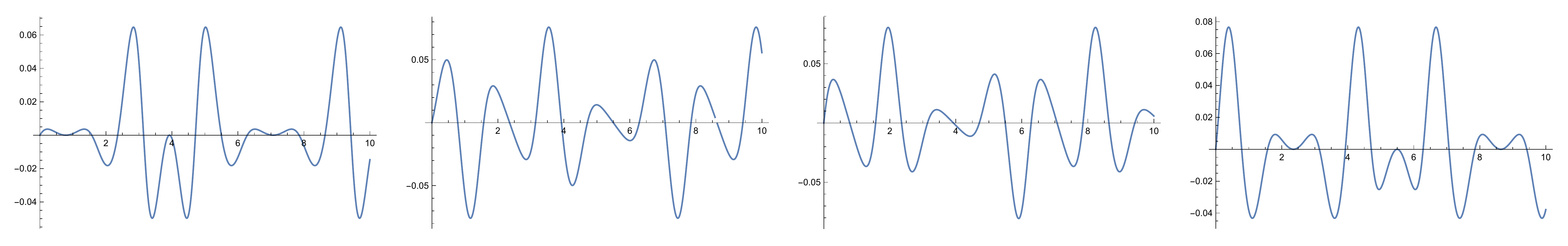}}
\end{picture}
\caption{The differences between the dihedral angles in mentioned above parameterizations.} 
\label{fig:two_parametrization_diff}
\end{figure}

\newpage
\section{Appendix: properties of orthodiagonal quadrilaterals}
\label{sec:app_orth}
\begin{lem}
\label{lem:OrthoRelation}
The diagonals of a spherical quadrilateral with side lenths $\alpha$, $\beta$, $\gamma$, $\delta$ (in this cyclic order)
are orthogonal if and only if its side lengths satisfy the relation
$$
\cos\alpha\cos\gamma = \cos\beta\cos\delta.
$$
\end{lem}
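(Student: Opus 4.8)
The plan is to work directly with the point $P$ where the two diagonals of the quadrilateral meet. Label the vertices $V_1,V_2,V_3,V_4$ cyclically, so that the sides are $\alpha=|V_1V_2|$, $\beta=|V_2V_3|$, $\gamma=|V_3V_4|$, $\delta=|V_4V_1|$, and write $p=|PV_1|$, $q=|PV_2|$, $r=|PV_3|$, $s=|PV_4|$ for the four sub-arcs into which $P$ splits the diagonals. Let $\omega$ denote the angle between the diagonals at $P$. Since $PV_1$ and $PV_3$ point in opposite directions (they lie on one diagonal), and likewise $PV_2$ and $PV_4$, the four triangles $V_1PV_2$, $V_2PV_3$, $V_3PV_4$, $V_4PV_1$ have angles $\omega$, $\pi-\omega$, $\omega$, $\pi-\omega$ at $P$, alternately. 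Applying the spherical law of cosines to each triangle then expresses $\cos\alpha,\cos\beta,\cos\gamma,\cos\delta$ in terms of $p,q,r,s$ and $\cos\omega$, for instance $\cos\alpha=\cos p\cos q+\sin p\sin q\cos\omega$ and $\cos\beta=\cos q\cos r-\sin q\sin r\cos\omega$.

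The heart of the argument is a single algebraic identity. Forming the combination $\cos\alpha\cos\gamma-\cos\beta\cos\delta$ from these four expressions, the term independent of $\cos\omega$ and the term quadratic in $\cos\omega$ both cancel, and the surviving linear term regroups cleanly. I expect to obtain
\[
	\cos\alpha\cos\gamma-\cos\beta\cos\delta=\cos\omega\,\sin(p+r)\,\sin(q+s).
\]
Here $p+r$ and $q+s$ are precisely the full lengths of the two diagonals, so the two sine factors are transparent geometric quantities rather than unwieldy ones.

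With this identity in hand the lemma is immediate. For a genuine (non-degenerate) spherical quadrilateral the diagonals have lengths strictly between $0$ and $\pi$ — the opposite vertices are neither equal nor antipodal — hence $\sin(p+r)\neq0$ and $\sin(q+s)\neq0$. Consequently $\cos\alpha\cos\gamma=\cos\beta\cos\delta$ holds if and only if $\cos\omega=0$, that is $\omega=\tfrac{\pi}{2}$, which is exactly orthogonality of the diagonals. The main obstacle is the factorization step: one must expand the product carefully and collect the four linear terms into $\sin(p+r)\sin(q+s)$ by means of the sine addition formula. A secondary point to settle is the configuration convention — how $P$ and the sub-arc lengths are defined when the diagonals meet outside the quadrilateral — but the displayed identity, read with the natural (possibly signed) arc lengths, remains valid, so the equivalence is unaffected.
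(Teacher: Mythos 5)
Your proof is correct, but it takes a genuinely different route from the paper's. The paper also works at the intersection point of the diagonals, yet it assumes orthogonality from the outset and applies the spherical \emph{Pythagorean} theorem to the four right triangles: with sub-arcs $a,b,c,d$ it gets $\cos a\cos b=\cos\alpha$, $\cos b\cos c=\cos\beta$, $\cos c\cos d=\cos\gamma$, $\cos d\cos a=\cos\delta$, and multiplies to obtain $\cos\alpha\cos\gamma=\cos a\cos b\cos c\cos d=\cos\beta\cos\delta$ --- a one-line argument for the implication ``orthogonal $\Rightarrow$ relation'', with the converse left implicit (Pythagoras presupposes the right angle, so that computation cannot by itself deliver the ``if'' direction). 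You instead keep the diagonal angle $\omega$ general and use the full law of cosines, and your key identity does check out: writing $V_1=\cos p\,P+\sin p\,e_1$, $V_3=\cos r\,P-\sin r\,e_1$, $V_2=\cos q\,P+\sin q\,e_2$, $V_4=\cos s\,P-\sin s\,e_2$ with $e_1\cdot e_2=\cos\omega$, the terms of degree $0$ and $2$ in $\cos\omega$ cancel in $\cos\alpha\cos\gamma-\cos\beta\cos\delta$ and the linear term regroups, via the sine addition formula, to $\cos\omega\,\sin(p+r)\sin(q+s)$ exactly as you predicted. Since these four cosine formulas are literally inner products, the identity holds verbatim for signed arc lengths, which settles your configuration worry about the diagonals meeting outside the quadrilateral (this parallels the paper's remark that one takes one of the two intersection points of the two great circles; in the paper's version the issue is invisible because cosine is even). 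What your approach buys: both directions of the equivalence in one stroke, with the nondegeneracy hypothesis made explicit --- $\sin(p+r)\neq0\neq\sin(q+s)$, i.e.\ opposite vertices neither coincident nor antipodal, and in the antipodal case the diagonal, hence orthogonality itself, is not even well defined. What the paper's approach buys: brevity, at the cost of proving only half of the stated ``if and only if''.
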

\begin{proof}
Take the intersection point of two diagonals of the quadrilateral.
(A pair of opposite vertices determines a big circle. We take one of the two intersection points of these two big circles.)
Denote the lengths of the segments between the vertices and the intersection point of the diagonals as shown in \Href{Figure}{fig:OrthoRelation}.
By the spherical Pythagorean theorem we have
$
\cos a \cos b = \cos\alpha,$ $\cos b \cos c = \cos\beta,$ $\cos c \cos d = \cos\gamma,$ $ \cos d \cos a = \cos\delta.
$
It follows that
\[
\cos\alpha\cos\gamma = \cos a \cos b \cos c \cos d = \cos\beta\cos\delta.
\]
\begin{figure}[ht]
\begin{center}
\begin{picture}(100,100)
\put(0,0){\includegraphics[scale=0.6]{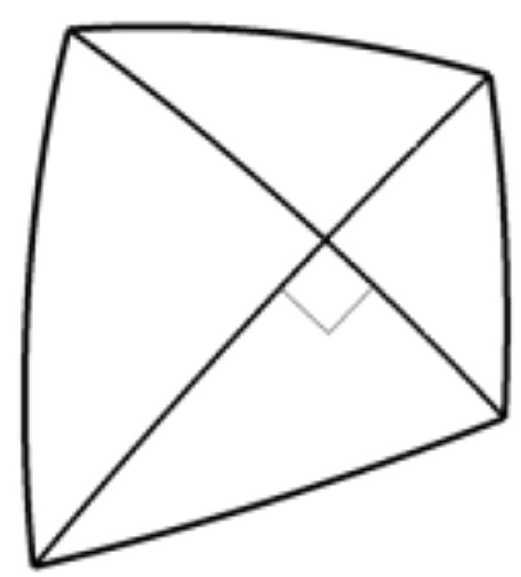}}
\put(-3,50){$\delta$}
\put(90,50){$\beta$}
\put(46,98){$\gamma$}
\put(53,8){$\alpha$}
\put(28, 36){$a$}
\put(68, 35){$b$}
\put(68, 64){$c$}
\put(41, 74){$d$}
\end{picture}
\end{center}
\caption{To the proof of \Href{Lemma}{lem:OrthoRelation}.}
\label{fig:OrthoRelation}
\end{figure}
\end{proof}

\begin{lem} \label{lem:existence_spherical_quad}
Let $\alpha, \beta, \gamma, \delta \in (0, \pi)$ satisfy $\cos \alpha \cos \gamma = \cos \beta \cos \delta.$ Then there exists a spherical orthodiagonal quadrilateral with side  lengths $\alpha, \beta, \gamma, \delta.$ 
\end{lem}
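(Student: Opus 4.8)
The plan is to reverse the decomposition used in the proof of \Href{Lemma}{lem:OrthoRelation}. There, an orthodiagonal quadrilateral was cut along its two orthogonal diagonals into four right spherical triangles meeting at the intersection point $O$ of the diagonals, with half-diagonals $a,b,c,d$ and the side relations $\cos\alpha=\cos a\cos b$, $\cos\beta=\cos b\cos c$, $\cos\gamma=\cos c\cos d$, $\cos\delta=\cos d\cos a$ obtained from the spherical Pythagorean theorem. For the converse I would begin from a point $O$ and two great circles $\Gamma_1\perp\Gamma_2$ through $O$, place $A_1,A_3$ on $\Gamma_1$ at geodesic distances $a,c$ from $O$ on opposite sides, and $A_2,A_4$ on $\Gamma_2$ at distances $b,d$ on opposite sides. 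The diagonals $A_1A_3$ and $A_2A_4$ then lie on $\Gamma_1$ and $\Gamma_2$ and are orthogonal, while the spherical Pythagorean theorem produces exactly the four side relations above. Thus it suffices to solve that system for $a,b,c,d\in(0,\pi)$.

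To solve it, I would first dispose of the degenerate case in which all four angles equal $\tfrac{\pi}{2}$, where $a=b=c=d=\tfrac{\pi}{2}$ works directly. Otherwise, since a cyclic relabeling of the sides preserves both the form of the system and the hypothesis $\cos\alpha\cos\gamma=\cos\beta\cos\delta$, I may assume $\cos\alpha\neq0$. Setting $t\coloneqq\cos a$, the relations $\cos a\cos b=\cos\alpha$, $\cos b\cos c=\cos\beta$ and $\cos d\cos a=\cos\delta$ force $\cos b=\cos\alpha/t$, $\cos c=\cos\beta\,t/\cos\alpha$ and $\cos d=\cos\delta/t$; the remaining relation $\cos c\cos d=\cos\gamma$ then reduces to $\cos\beta\cos\delta=\cos\alpha\cos\gamma$ and so holds automatically by hypothesis. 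A vanishing of $\cos\beta$, $\cos\gamma$ or $\cos\delta$ is harmless here: it merely pins the corresponding arc length to $\tfrac{\pi}{2}$, which is still interior to $(0,\pi)$, and in the last two cases the reduced relation remains consistent.

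It remains to choose $t$ so that $a,b,c,d$ are genuine interior lengths, i.e. so that $t\neq0$ and $|t|,|\cos b|,|\cos c|,|\cos d|$ are all strictly below $1$. This amounts to $\max(|\cos\alpha|,|\cos\delta|)<|t|<\min\!\big(1,\,|\cos\alpha|/|\cos\beta|\big)$, and verifying that this interval is nonempty is the crux of the proof. The lower bound is below $1$ because $\alpha,\delta\in(0,\pi)$; it is below $|\cos\alpha|/|\cos\beta|$ because $|\cos\beta|<1$ gives $|\cos\alpha|<|\cos\alpha|/|\cos\beta|$, while the hypothesis rewrites $|\cos\delta||\cos\beta|=|\cos\alpha||\cos\gamma|<|\cos\alpha|$ (using $|\cos\gamma|<1$), yielding $|\cos\delta|<|\cos\alpha|/|\cos\beta|$. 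Picking any such $t$ produces $a,b,c,d\in(0,\pi)$ and hence, via the construction above, the desired orthodiagonal quadrilateral. The main obstacle is precisely this nonemptiness check, which is exactly where the orthodiagonality relation enters in an essential way; the right-angle sub-cases are then routine once one notes that a zero cosine simply fixes an arc at $\tfrac{\pi}{2}$.
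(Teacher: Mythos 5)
Your proof is correct, and it takes a genuinely different route from the paper's. The paper normalizes the data (taking $\alpha+\beta$ to be the smallest sum of consecutive sides, with the case $\delta=\alpha$, $\beta=\gamma$ set aside), lays the sides of lengths $\alpha$ and $\beta$ along a single great circle so that the diagonal $V_1V_3$ passes through the flat vertex $V_2$, erects the second diagonal as the perpendicular at $V_2$, places $V_4$ on it at distance $\delta$ from $V_1$, and obtains $|V_3V_4|=\gamma$ from one application of the spherical Pythagorean theorem; this exhibits a single, partially degenerate representative, and it leaves implicit the placement condition $|\cos\delta|\le|\cos\alpha|$ needed for $V_4$ to exist on that perpendicular. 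You instead invert the diagonal decomposition of \Href{Lemma}{lem:OrthoRelation}: four right triangles around the crossing point $O$, the cyclic system $\cos a\cos b=\cos\alpha$, $\cos b\cos c=\cos\beta$, $\cos c\cos d=\cos\gamma$, $\cos d\cos a=\cos\delta$, solved with the free parameter $t=\cos a$, the hypothesis entering exactly once through the nonemptiness of the interval $\max\left(|\cos\alpha|,|\cos\delta|\right)<|t|<\min\left(1,|\cos\alpha|/|\cos\beta|\right)$; your interval check is airtight, and the reduction (cyclic relabeling to get $\cos\alpha\neq0$, plus the all-right-angles case) is legitimate since both the system and the identity $\cos\alpha\cos\gamma=\cos\beta\cos\delta$ are cyclically invariant. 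What your approach buys is a fully explicit one-parameter family of nondegenerate realizations (pleasantly consistent with the one-dimensional configuration spaces used throughout the paper) and no case analysis on the ordering of the sides; what the paper's buys is brevity. One edge case you share with the paper: if $\beta=\pi-\alpha$ (whence $\gamma=\pi-\delta$ by the hypothesis when $\cos\alpha\neq0$), then $\cos c=-t$ for every admissible $t$, so $A_1$ and $A_3$ are forced to be antipodal — this is intrinsic to such side data, not a defect of your construction, but the great circle through that diagonal pair is then not unique, and one should say that it is chosen to be $\Gamma_1$, which keeps the diagonals orthogonal in the sense used in \Href{Lemma}{lem:OrthoRelation}.
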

\begin{proof} Assume that $\alpha + \beta$ is the minimum of the sum of two consecutive angles in $(\alpha, \beta, \gamma, \delta, \alpha)$. If $\delta= \alpha$ and $\beta = \gamma,$ the construction is obvious. Otherwise, assume that $\delta > \alpha.$ 
Put sides $V_1 V_2$ and $V_2 V_3$ with lengths $\alpha$ and $\beta$ respectively on the same great circle.    Then there is a vertex $V_4$ such that 
$V_1 V_2 V_4$ is a right angle and   the length of $V_4 V_1$ is $\delta.$
By the identity, the length of $V_3 V_4$ is $\gamma.$
\end{proof}

\newpage
\bibliographystyle{abbrv}
\bibliography{uvolit} 

\end{document}